\newcommand{\D}{\Delta}
\newtheorem{THM}{Theorem}[section]
\newtheorem{LEM}[THM]{Lemma}
\newtheorem{COR}[THM]{Corollary}
\newtheorem{PROP}[THM]{Proposition}
\newtheorem{DEF}[THM]{Definition}
\newtheorem{OBS}[THM]{Observation}
\newtheorem{QUE}[THM]{Question}
\newtheorem*{GS}{The Gy\'arf\'as-Sumner conjecture}
\title{Unavoidable Subtournaments in Tournaments with Large Chromatic Number}
\author{Ilhee Kim\thanks{December \& Company, Seoul, Republic of Korea} \and Ringi Kim\thanks{
Ringi Kim was  supported by the National Research Foundation of Korea(NRF) grant funded by the Korea government(MSIT) (No. NRF-2018R1C1B6003786).
Department of Mathematical Sciences, KAIST, Daejeon, Republic of Korea.
}
}
\date{\today}
\begin{document}

\maketitle

\begin{abstract}


For a set $\mathcal{H}$ of tournaments, 
we say $\mathcal{H}$ is \emph{heroic} 
if  every tournament, 
not containing any member of $\mathcal{H}$ as a subtournament, 
has bounded chromatic number. 
In~\cite{hero}, Berger et al. explicitly characterized all heroic sets containing one tournament.
Motivated by this result, we study heroic sets containing two tournaments.
We give  a necessary condition for a set containing two tournaments to be heroic. 
We also construct infinitely many minimal heroic sets of size two.
\end{abstract}

\section{Introduction}\label{SEC:intro}
All graphs and digraphs in this paper are simple.
For a graph $G$, the \emph{chromatic number} of $G$, denoted by $\chi(G)$, is the minimum number of colors needed to color vertices of $G$ in such a way that there are no adjacent vertices with the same color. 
Since the chromatic number of $G$ is lower bounded by its clique number $\omega(G)$  (the maximum number of pairwise adjacent vertices of $G$), there has been great interest in a class of graphs whose chromatic number is bounded by some function of its clique number. 
If $\mathcal{C}$ is  a class of graphs closed under induced subgraphs, and there exists a function $f$ such that $\chi(G)\le f(\omega(G))$ for every $G\in \mathcal{C}$, then we say $\mathcal{C}$ is \emph{$\chi$-bounded by a $\chi$-bounding function $f$}.
A well-known example of a $\chi$-bounded class is the class of \emph{perfect} graphs. 
(A perfect graph is a graph with the property that, $\chi(H)=\omega(H)$ for every its induced subgraph $H$.)
 Clearly, the identity function is a $\chi$-bounding function for the class of perfect graphs.

There are many results and conjectures about $\chi$-bounded classes which are obtained by forbidding certain families of graphs.
A well-known example is the strong perfect graph theorem~\cite{perfect} which states that the set of graphs $G$, such that neither $G$ nor its complement contains an induced odd cycle of length at least five, is $\chi$-bounded by the identity function.
Recently, three conjectures of Gy\'arf\'as~\cite{gyarfas3}, regarding $\chi$-bounded classes of graphs forbidding some infinite sets of cycles, were proved in a series of papers by Chudnovsky, Scott, Seymour and Spirkl. 
For a graph $G$ and a set $\mathcal{G}$ of graphs, we say $G$ is \emph{$\mathcal{G}$-free} if $G$ contains no members of $\mathcal{G}$ as induced subgraphs.
The three conjectures, which are now theorems, state as follows:
If $\mathcal{G}$ is either one of the following classes, then the set of all $\mathcal{G}$-free graphs is $\chi$-bounded.

\begin{itemize}
\item the set of all odd holes of length at least five (Scott and Seymour~\cite{hole1});
\item the set of all holes of length at least $\ell$ for some $\ell$ (Chudnovsky, Scott and Seymour~\cite{hole3});
\item the set of all odd holes of length at least $\ell$ for some $\ell$ (Chudnovsky, Scott, Seymour and Spirkl~\cite{hole8}).
\end{itemize} 

Another conjecture due to Gy\'arf\'as~\cite{gyarfas}, independently proposed by Sumner~\cite{sumner}, deals with $\chi$-bounded classes obtained by forbidding finite family $\mathcal{F}$ of graphs. By the random construction of Erd\H{o}s~\cite{erdos}, we know that for each $c$ and $g$, there exists a graph $G$ with $\chi(G)\ge c$ and minimum cycle length at least $g$. This implies that for the class of $\mathcal{F}$-free graphs to be $\chi$-bounded, it is necessary that $\mathcal{F}$ contains a forest.  The Gy\'arf\'as-Sumner conjecture asserts that the necessary condition is also sufficient.
\begin{GS}
Let $K$ be a complete graph and $F$ a forest. 
Then, there exists $c$ such that every $\{K,F\}$-free graph has chromatic number at most $c$.
\end{GS}

This conjecture is known to be true for several classes of forests~\cite{gyarfas5, gyarfas2, gyarfas7, gyarfas6, hole12}, but is mostly wide open.

In this paper, we are interested in a similar question to the Gy\'arf\'as-Sumner conjecture for tournaments.
A \emph{tournament} is a digraph of which underlying graph is a complete graph. For a tournament $T$ and a set $S\subseteq V(T)$, we denote by $T|S$ the subtournament of $T$  induced on $S$. 
We say $S\subseteq V(T)$ is \emph{transitive} if $T|S$ has no directed cycle.

For a tournament $T$ and its vertices $u$ and $v$, if $uv \in E(T)$, then we say \emph{$u$ is adjacent to $v$} or \emph{$v$ is adjacent from $u$}. 
For two disjoint subsets $X$ and $Y$ of $V(T)$, if every vertex in $X$ is adjacent to every vertex in $Y$, then we say \emph{$X$ is complete to $Y$}, and write $X \Rightarrow Y$. 
If $T$ is obtained from the disjoint union of tournaments $T_1$ and $T_2$ by adding all edges from $V(T_1)$ to $V(T_2)$, we write $T=T_1\Rightarrow T_2$.

For tournaments $T_1$ and $T_2$, if $T_2$ is isomorphic to a subtournament of $T_1$, then we say \emph{$T_1$ contains $T_2$},  and if $T_1$ does not contain $T_2$, we say $T_1$ is \emph{$T_2$-free}. 
If $\mathcal{H}$ is a set  of tournaments and a tournament $T$ is $H$-free for every $H\in \mathcal{H}$, then we say $T$ is \emph{$\mathcal{H}$-free}.

For a positive integer $k$ and a tournament $T$, 
a \emph{$k$-coloring of $T$} is a map $\phi:V(G)\to C$ with $|C|=k$ such that $\phi^{-1}(c)$ is transitive for $c \in C$. 
The \emph{chromatic number} of a tournament $T$, denoted by $\chi(T)$, is the minimum $k$ such that $T$ admits a $k$-coloring. This tournament invariant was first introduced by Neumann Lara~\cite{tournamentcoloring}. 

In this paper, we study the tournament version of the  Gy\'arf\'as-Sumner conjecture, that is, we investigate 
a class $\mathcal{H}$ of tournaments where  every $\mathcal{H}$-free tournament has bounded chromatic number.
Such a set is called \emph{heroic}. (A heroic set for graphs can be defined similarly. We direct the interested reader to~\cite{graphhero}.)

\begin{DEF}
A set $\mathcal{H}$  of tournaments is \emph{heroic} if there exists $c$ such that every $\mathcal{H}$-free tournament has chromatic number at most $c$. 
\end{DEF}

For example, if $\mathcal{H}$ contains a cyclic triangle, then $\mathcal{H}$ is heroic since every tournament with chromatic number at least three contains a cyclic triangle.

\subsection{Tournaments with large chromatic number}\label{sec:construction}

Several graph classes with large chromatic number 
are known~\cite{My1955,Kv1989,Lo1968,NeRo1989,AlKo2016}. 
A complete graph is a trivial example, and
a Mycielski graph is a non-trivial example, which have clique number two but arbitrarily large chromatic number. 
In contrast to graphs, few such classes of tournaments have been developed. One is introduced in~\cite{hero}, as follows:
\\
\\
{\bf Construction of $D_n$.}
If $T$ is a tournament and $(X,Y,Z)$ is a partition of $V(T)$ such that $X \Rightarrow Y$, $Y \Rightarrow Z$ and $Z \Rightarrow X$, 
 we call $(X,Y,Z)$ a \emph{trisection} of $T$, 
 and if $T|X$, $T|Y$ and $T|Z$ are isomorphic to tournaments $A$, $B$ and $C$, respectively, we write $T=\D(A,B,C)$. We denote by $I$ a one-vertex tournament. 
We construct tournaments $D_n$ as follows: $D_1=I$, and for $n\ge 2$, $D_n =\Delta(I, D_{n-1},D_{n-1})$. See Figure~\ref{Dn_fig}. 
\vspace{0.5cm}

In section~\ref{SEC:class_unbounded}, we prove that the chromatic number of $D_n$ is equal to $n$.

\begin{figure} [ht!]
\centering
\includegraphics[scale=0.6]{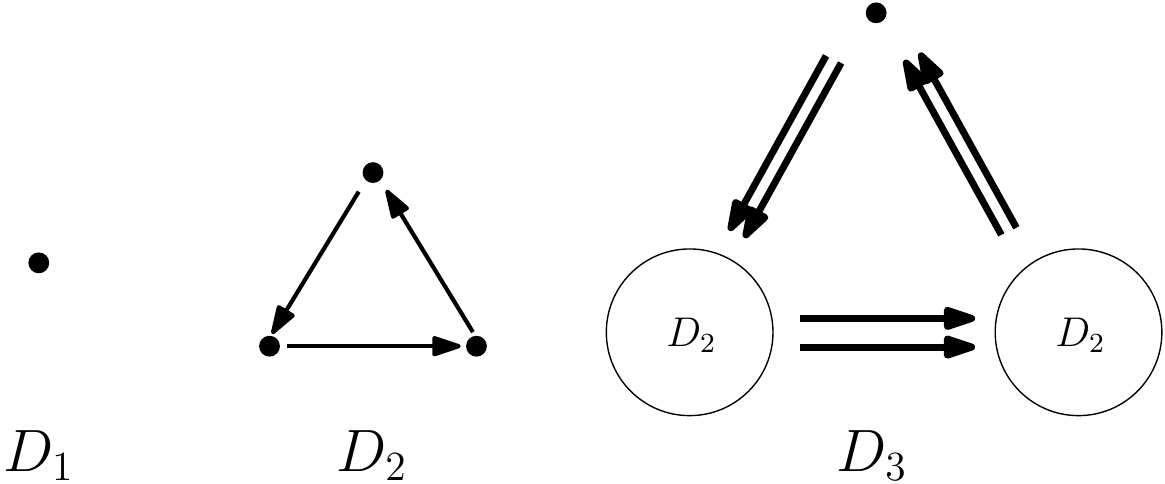}
\caption{$D_n$ for $n=1,2,3$}
\label{Dn_fig}
\end{figure}

In this section, 
we introduce another class of tournaments, which are denoted by $A_n$,
with large chromatic number. 
For a tournament $T$ and an integer $n\ge 2$, 
if  $(X_1,X_2,\ldots,X_{2n-1})$ is a partition of $V(T)$ such that 
 for $1\le i < j \le 2n-1$, 
\begin{itemize} 
\item  $V_j$ is complete to $V_i$ if both $i$ and $j$ are odd, and 
\item $V_i$ is complete to $V_j$ if either $i$ or $j$ is even,
\end{itemize}
then we call $(X_1,X_2,\ldots,X_{2n-1})$  a \emph{$\D$-partition} of $T$, and we write $T=\Delta(T_1,T_2,\ldots,T_{2n-1})$ where $T_i=T|V_i$ for $1\le i \le 2n-1$. Note that every trisection of a tournament is a $\Delta$-partition of it. 
\\
\\
{\bf Construction of $A_n$.}
$A_1$ is a one-vertex tournament, and for $n \ge 2$, $A_n=\D(I^{(1)},A_{n-1}^{(1)},I^{(2)},A_{n-1}^{(2)},\ldots,A_{n-1}^{(n-1)},I^{(n)})$  where each $I^{(i)}$ is isomorphic to $I$ (a one-vertex tournament) and $A_{n-1}^{(i)}$ is isomorphic to $A_{n-1}$. See Figure~\ref{An_fig}.
\vspace{0.5cm}

\begin{figure}[ht!]
\centering
\includegraphics[scale=0.8]{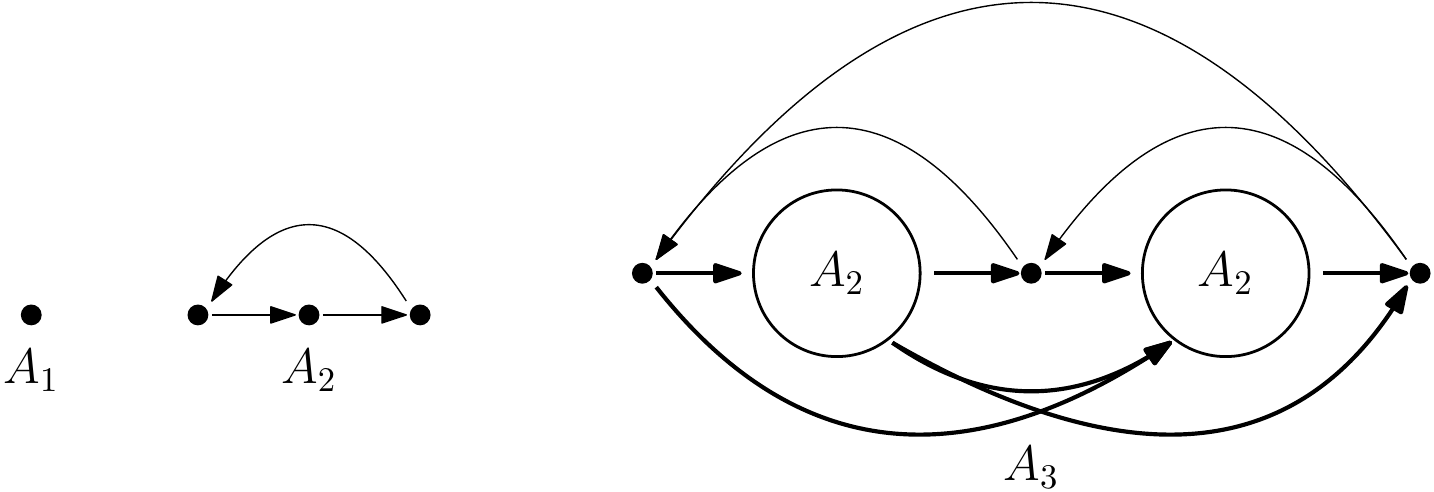}
\caption{$A_n$ for $n=1,2,3$. 
}
\label{An_fig}
\end{figure}

In section~\ref{SEC:class_unbounded}, we prove that $\chi(A_n)=n$.

\subsection{Tournaments contained in heroic sets}\label{SEC:intro-contained}
Let $\mathcal{T}$ be a set of tournaments. 
If, for every tournament $T\in \mathcal{T}$, every subtournament of $T$ is contained in $\mathcal{T}$, then $\mathcal{T}$ is said to be \emph{hereditary}.
If for every $n$, there exists a tournament in $\mathcal{T}$ with chromatic number larger than $n$, 
we say \emph{$\mathcal{T}$ has unbounded chromatic number}.
It is easy to see that if $\mathcal{T}$ is a hereditary class of tournaments and has unbounded chromatic number, then every heroic set contains a tournament in $\mathcal{T}$.

\begin{PROP}\label{PRO:hereditary}
Let $\mathcal{T}$ be a hereditary class of tournaments. If the chromatic number of $\mathcal{T}$ is unbounded, then every heroic set meets $\mathcal{T}$.
\end{PROP}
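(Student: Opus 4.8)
The plan is to argue by contradiction, exploiting the tension between being heroic (a uniform bound on $\chi$ for all $\mathcal{H}$-free tournaments) and the unboundedness of $\chi$ on the hereditary class $\mathcal{T}$. So suppose $\mathcal{H}$ is a heroic set that does not meet $\mathcal{T}$, and let $c$ be a constant witnessing that $\mathcal{H}$ is heroic, i.e.\ every $\mathcal{H}$-free tournament has chromatic number at most $c$. Since $\mathcal{T}$ has unbounded chromatic number, I can pick a tournament $T\in\mathcal{T}$ with $\chi(T)>c$.

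The key step is to check that this $T$ is $\mathcal{H}$-free. Suppose not; then some $H\in\mathcal{H}$ is isomorphic to a subtournament $T|S$ of $T$ for some $S\subseteq V(T)$. Because $\mathcal{T}$ is hereditary, $T|S\in\mathcal{T}$, and hence $H\in\mathcal{T}$ as well (here one uses the standard convention that these classes are closed under isomorphism, so that "$H$ is isomorphic to a member of $\mathcal{T}$" gives "$H\in\mathcal{T}$"). But then $H\in\mathcal{H}\cap\mathcal{T}$, contradicting the assumption that $\mathcal{H}$ does not meet $\mathcal{T}$. Therefore $T$ is $\mathcal{H}$-free.

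Now $T$ is an $\mathcal{H}$-free tournament with $\chi(T)>c$, which contradicts the choice of $c$. Hence no heroic set can avoid $\mathcal{T}$, proving the proposition.

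There is essentially no technical obstacle here; the only point requiring a little care is the bookkeeping around isomorphism — making explicit that "meets $\mathcal{T}$" and "hereditary" are to be read up to isomorphism, so that a copy of $H$ appearing inside some $T\in\mathcal{T}$ really does place $H$ itself in $\mathcal{T}$. Everything else is a direct unwinding of the definitions of \emph{heroic}, \emph{hereditary}, and \emph{unbounded chromatic number}.
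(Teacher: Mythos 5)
Your argument is correct and is exactly the (implicit) argument the paper has in mind — the paper states this proposition with only the remark that it "is easy to see," and your contradiction via a high-chromatic $T\in\mathcal{T}$ that must be $\mathcal{H}$-free by heredity is the intended unwinding of the definitions.
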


For a set $\mathcal{T}$ of tournaments, the \emph{closure of $\mathcal{T}$} is the minimal hereditary class of tournaments containing $\mathcal{T}$. We define two classes of tournaments as follows: 

\begin{itemize}
\item $\mathcal{D}$ is the closure of $\{D_n\mid n\ge 1\}$.
\item $\mathcal{A}$ is the closure of $\{A_n\mid n\ge1\}$.
\end{itemize} 

Since $\mathcal{D}$ and $\mathcal{A}$ have unbounded chromatic number, Proposition~\ref{PRO:hereditary} implies the following.
\begin{THM}\label{THM_meet1}
Every heroic set intersects with $\mathcal{D}$ and $\mathcal{A}$. 
\end{THM}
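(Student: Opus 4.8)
My plan is to deduce Theorem~\ref{THM_meet1} directly from Proposition~\ref{PRO:hereditary}. To apply that proposition to $\mathcal{D}$ and to $\mathcal{A}$, I need two facts about each of these classes: that it is hereditary, and that its chromatic number is unbounded. The first holds by definition, since $\mathcal{D}$ and $\mathcal{A}$ are the closures of $\{D_n\mid n\ge1\}$ and $\{A_n\mid n\ge1\}$, hence are hereditary. So the whole task reduces to showing that the sets $\{\chi(D_n)\mid n\ge1\}$ and $\{\chi(A_n)\mid n\ge1\}$ are unbounded; since $D_n\in\mathcal{D}$ and $A_n\in\mathcal{A}$ for every $n$, this is exactly the claim $\chi(D_n)\to\infty$ and $\chi(A_n)\to\infty$. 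Granting that (indeed, granting the stronger equalities $\chi(D_n)=n$ and $\chi(A_n)=n$ established in Section~\ref{SEC:class_unbounded}), Proposition~\ref{PRO:hereditary} applied once to $\mathcal{D}$ and once to $\mathcal{A}$ yields Theorem~\ref{THM_meet1}.

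For completeness I would indicate why the chromatic numbers grow, since this is the only point with any content. The key observation is that if $T=\Delta(A,B,C)$ with trisection $(X,Y,Z)$, then no transitive subset of $T$ meets all three of $X,Y,Z$: a triple $x\in X$, $y\in Y$, $z\in Z$ would satisfy $x$ adjacent to $y$, $y$ adjacent to $z$, $z$ adjacent to $x$, forming a cyclic triangle. For $D_n=\Delta(I,D_{n-1},D_{n-1})$ I would induct on $n$, with $\chi(D_1)=1$ clear. Write the trisection as $(X,Y,Z)$ with $X=\{v\}$ and $T|Y,T|Z\cong D_{n-1}$. In any coloring of $D_n$, the color class containing $v$ meets $X$, so by the observation it misses one of $Y,Z$, say $Z$; then $T|Z\cong D_{n-1}$ is colored with at least one fewer color than the total, so the total is at least $\chi(D_{n-1})+1\ge n$. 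An easy explicit $n$-coloring gives the matching upper bound, so $\chi(D_n)=n$ and $\mathcal{D}$ has unbounded chromatic number.

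The argument for $A_n$ follows the same template but is more involved, because $A_n$ carries a $\D$-partition into $2n-1$ parts rather than a trisection. One first records the analogous confinement property: a transitive subset of a tournament with $\D$-partition $(V_1,\ldots,V_{2n-1})$ cannot be spread across the parts arbitrarily, since the alternating complete-to relations force cyclic triangles. Using this, one shows that in any coloring of $A_n$ some even-indexed part $V_{2k}$, which carries a copy of $A_{n-1}$, is colored with strictly fewer colors than the total; induction on $n$ (base $\chi(A_1)=1$) then gives $\chi(A_n)\ge n$, and a direct coloring gives equality. With both families shown to have unbounded chromatic number, Theorem~\ref{THM_meet1} follows.

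The main obstacle is not in Theorem~\ref{THM_meet1} itself, which is immediate once the chromatic numbers of $\{D_n\}$ and $\{A_n\}$ are known to be unbounded, but in the lower bound $\chi(A_n)\ge n$: one must pin down precisely which part of the $\D$-partition is forced to shed a color under an arbitrary coloring, and verify the confinement property for $\D$-partitions. By contrast, the $D_n$ computation and the deduction of the theorem from Proposition~\ref{PRO:hereditary} are routine.
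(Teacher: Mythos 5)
Your reduction is exactly the paper's: $\mathcal{D}$ and $\mathcal{A}$ are hereditary by construction, so Proposition~\ref{PRO:hereditary} reduces Theorem~\ref{THM_meet1} to the unboundedness of $\chi(D_n)$ and $\chi(A_n)$, and your argument for $\chi(D_n)=n$ is complete and essentially the one in Proposition~\ref{LEM:Dn}. The gap is where you yourself locate it: the lower bound $\chi(A_n)\ge n$ is never actually proved, only described as ``one shows that some even-indexed part is colored with strictly fewer colors than the total,'' and that intermediate claim is false as stated for arbitrary colorings. For example, $A_3$ admits a proper $3$-coloring in which both triangle parts $X_1,X_2$ receive all three colors: give the singletons $v_1,v_2,v_3$ colors $1,2,3$ and color each of $X_1,X_2$ with $1,2,3$; every color class is then a transitive triangle (e.g.\ the class of color $1$ is $v_1\to x\to y$ with $x\in X_1$, $y\in X_2$). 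So ``some part sheds a color'' cannot follow from the $\D$-partition structure alone; it needs the assumption that only $n-1$ colors are used, and an argument identifying which part is affected.

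The missing step is closed by a pigeonhole on the $n$ singleton parts, as in Proposition~\ref{LEM:An}. Suppose $A_n$ has an $(n-1)$-coloring $\psi$. Among the singletons $v_1,\dots,v_n$ two share a color, say $\psi(v_p)=\psi(v_q)$ with $p<q$. Since $v_p$ is complete to $X_p$, $X_p$ is complete to $v_q$, and $v_q$ is adjacent to $v_p$, any vertex of $X_p$ carrying that color would form a monochromatic cyclic triangle with $v_p,v_q$; hence $X_p$ avoids this color entirely and is colored with at most $n-2$ colors, contradicting $\chi(A_{n-1})=n-1$ from the induction hypothesis. With this inserted (or with an explicit appeal to Proposition~\ref{LEM:An} of Section~\ref{SEC:class_unbounded}), your proposal becomes the paper's proof; without it, the $\mathcal{A}$ half of Theorem~\ref{THM_meet1} is not established.
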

It is easy to see that two sets $\mathcal{D}$ and $\mathcal{A}$ are minimal in the
sense that there is no proper hereditary subset of $\mathcal{D}$ or $\mathcal{A}$ with unbounded chromatic number.

\subsection{Forest tournaments}
In the previous section, we constructed two (minimal) classes of tournaments intersecting with all heroic sets. In this section, we introduce another class of tournaments, which are called forest tournaments, intersecting with all finite heroic sets. 

If $S$ is a finite set and $\sigma$ is an ordering of $S$, then for $a,b \in S$, we write $a<_{\sigma} b$ if $a$ comes before $b$ in $\sigma$. 
For example, if $\sigma=s_1,s_2,\ldots,s_n$, then $s_i <_{\sigma} s_j$ for every $1\le i<j\le n$. 
If $S'$ is a subset of $S$, then $\sigma|S'$ is the sub-ordering of $S$ on $S'$.
We denote by $\sigma\setminus S'$ the ordering $\sigma|(S-S')$. 

For a tournament $T$ and an ordering $\sigma=v_1,v_2,\ldots,v_n$ of $V(T)$, an edge $v_iv_j$ of $T$ is called a {\em backward edge (under $\sigma$)} if $i>j$. The \emph{backedge graph $B_{\sigma}(T)$ of $T$ with respect to $\sigma$} is the ordered (undirected) graph with vertex set $V(T)$ and vertex ordering $\sigma$ such that $uv \in E(B_{\sigma}(T))$ if and only if either $uv$ or $vu$ is a backward edge of $T$ under $\sigma$.  See Figure~\ref{fig:backedge}.

\begin{figure} [ht!]
\centering
\includegraphics[scale=0.6]{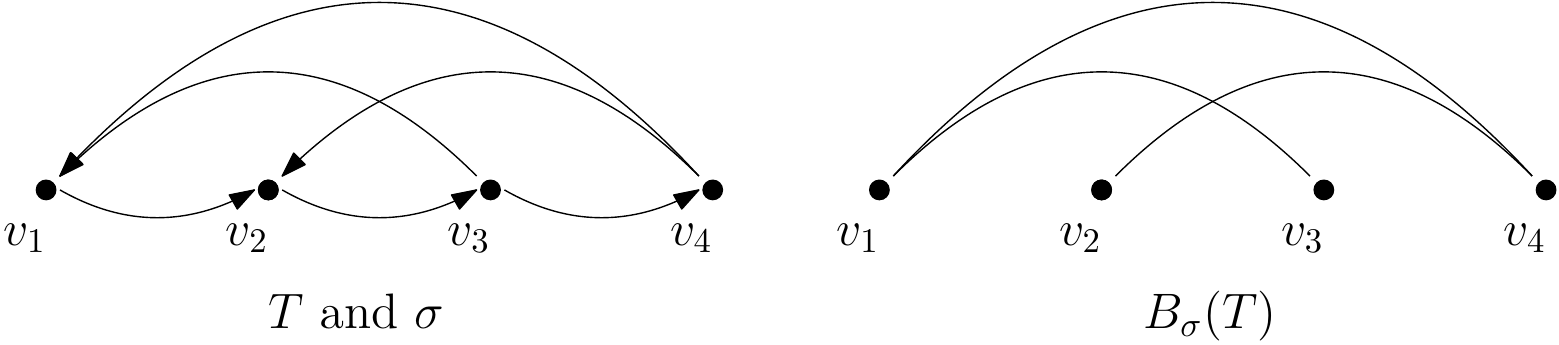}
\caption{$T$ and $B_{\sigma}(T)$}
\label{fig:backedge}
\end{figure}

The definition of a forest tournament is as follows:

\begin{DEF}

For a tournament $T$, a {\em forest ordering of $V(T)$} is an ordering $\sigma=v_1,v_2,\ldots,v_n$ of $V(T)$ such that 
\begin{itemize}
\item there exists $i$ such that no two edges of $B_{\sigma}(T)$ between $\{v_1,v_2,\ldots,v_i\}$ and $\{v_{i+1},\ldots,v_n\}$ are in the same component of $B_{\sigma}(T)$, 
and sub-orderings $v_1,v_2,\ldots,v_i$ and $v_{i+1},\ldots,v_n$ of $\sigma$ are forest orderings of $T|\{v_1,v_2,\ldots,v_i\}$ and $T|\{v_{i+1},\ldots,v_n\}$, respectively.
\end{itemize}
If such an ordering exists, we say $T$ is a {\em forest tournament} and the partition  $(\{v_1,v_2,\ldots,v_i\}, \{v_{i+1},\ldots,v_n\})$ of $V(T)$ is a {\em forest cut of $T$ (under $\sigma$)}.
\end{DEF}

For example, in Figure~\ref{forestexample}, $T$ is a forest tournament with  forest cut $(\{v_1,v_2,v_3\}, \{v_4,v_5,v_6,v_7\})$
since $T|\{v_1,v_2,v_3\}$ and $T|\{v_4,v_5,v_6,v_7\}$ are forest tournaments with forest cut $(\{v_1\},\{v_2,v_3\})$ and $(\{v_4,v_5\}, \{v_6,v_7\})$, respectively.

In section~\ref{SEC:forest}, we will show the following theorem.
\begin{THM}\label{THM_meet2}
Every finite heroic set contains a forest tournament.
\end{THM}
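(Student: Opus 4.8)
The plan is to apply Proposition~\ref{PRO:hereditary} with $\mathcal{T}$ taken to be the class of all forest tournaments. This class is manifestly hereditary: restricting a forest ordering of $T$ to any subset $S\subseteq V(T)$ yields an ordering whose backedge graph is an induced subgraph of $B_\sigma(T)$, and one checks that the recursive forest-cut structure descends to $T|S$ (a forest cut of $T$ splits $S$ into two parts, and the ``no two backward edges in the same component'' condition is only weakened by passing to an induced subgraph). So the main content is to show that \emph{the class of forest tournaments has unbounded chromatic number}; then Proposition~\ref{PRO:hereditary} immediately gives that every heroic set — in particular every finite one — meets it. Actually the statement we want is slightly stronger than Proposition~\ref{PRO:hereditary} as stated, since we want \emph{finite} heroic sets to contain a forest tournament rather than just meet the hereditary closure; but that is automatic here because the class of forest tournaments is already hereditary, so ``meets'' and ``contains a member of'' coincide.

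First I would set up the right family of high-chromatic forest tournaments. The natural candidates are the $D_n$: I claim each $D_n$ is a forest tournament, and since $\chi(D_n)=n$ (proved in Section~\ref{SEC:class_unbounded}), this gives unbounded chromatic number inside the class. To see $D_n$ is a forest tournament, recall $D_n=\Delta(I,D_{n-1},D_{n-1})$, i.e. $V(D_n)=\{x\}\cup X\cup Y$ with $X\Rightarrow Y$, $Y\Rightarrow\{x\}$, $\{x\}\Rightarrow X$, and $D_n|X\cong D_n|Y\cong D_{n-1}$. Build the ordering $\sigma_n$ recursively: place a forest ordering of $X$ first, then $x$, then a forest ordering of $Y$. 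Under this ordering the backward edges are exactly the edges from $Y$ to $x$ (since $\{x\}\Rightarrow X$ and $X\Rightarrow Y$ contribute no backward edges, and the internal backward edges of $X$ and $Y$ are handled recursively). Thus in $B_{\sigma_n}(D_n)$, the vertex $x$ is adjacent to every vertex of $Y$ and to nothing in $X$. I need a forest cut: take $i$ to be the position just after all of $X\cup\{x\}$ — no, that puts $x$ on the left and all its $B$-neighbors $Y$ on the right, so \emph{all} edges of the component containing $x$ cross the cut, violating the condition unless $x$ has at most one neighbor. So instead take the cut between $X$ and $\{x\}\cup Y$: the only crossing edges in $B_{\sigma_n}(D_n)$ are internal-to-$X$ backward edges that... no, those don't cross either. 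Crossing $B$-edges between $X$ and $\{x\}\cup Y$: there are none, since $D_n$ has no backward edges between $X$ and $Y$ and none between $X$ and $x$. Hence the cut $(X,\{x\}\cup Y)$ is trivially a forest cut (vacuously no two crossing edges share a component), provided $X$ with ordering $\sigma_{n-1}$ and $\{x\}\cup Y$ with its induced ordering are forest orderings. The former holds by induction; for the latter, $\{x\}\cup Y$ with $x$ first then $Y$ has backedge graph a star centered at $x$, and the cut $(\{x\}\cup\text{(first part of }Y),\ \text{rest})$ needs care — here I would instead use the cut $(\{x\},Y)$: the crossing edges form the whole star, all in one component, which is \emph{bad} unless $|Y|\le 1$.

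This shows the obstacle: the naive ordering makes $x$ a high-degree vertex straddling any cut. The fix is to choose the recursive ordering more cleverly so that every vertex of small ``backedge-degree'' can be peeled off. The right move is to order $D_n$ as: forest ordering of $Y$, then $x$, then forest ordering of $X$ — or more robustly, to interleave so that each apex vertex created at some level of the recursion sits adjacent (in the backedge graph) to a set that can itself be split off one level at a time. Concretely, since $B_{\sigma_n}(D_n)$ should be a \emph{forest} (this is surely the point of the name, and the recursive definition of forest ordering is exactly ``the ordered backedge graph admits a recursive separation into components''), I would prove by induction the cleaner statement: \emph{$B_{\sigma_n}(D_n)$ is a forest, and moreover every forest with a vertex ordering admits a forest ordering} — i.e., show that $T$ is a forest tournament iff $B_\sigma(T)$ is a forest for some ordering $\sigma$. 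Granting that equivalence, it remains only to exhibit, for each $n$, \emph{some} ordering of $D_n$ (or of $A_n$) whose backedge graph is acyclic; the recursion $D_n=\Delta(I,D_{n-1},D_{n-1})$ with ordering ``$\sigma_{n-1}$ on $X$, then $x$, then $\sigma_{n-1}$ on $Y$'' does this, since the only new edges are the star from $x$ into $Y$, and a star added to a disjoint union of two forests on disjoint vertex sets is again a forest. The hard part, then, is proving the equivalence ``forest tournament $\iff$ backedge graph is a forest for some ordering,'' or, if that equivalence is false, directly verifying the recursive forest-cut condition for $D_n$ by the more careful interleaved ordering; I expect this bookkeeping — matching the recursion of $D_n$ to the recursion in the definition of forest ordering — to be the only real work, with unboundedness of $\chi$ then free from $\chi(D_n)=n$ and heroicness then free from Proposition~\ref{PRO:hereditary}.
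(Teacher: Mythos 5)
Your plan cannot work, and the obstruction is not bookkeeping: the class of forest tournaments has \emph{bounded} chromatic number. Proposition~\ref{twocolor} shows every forest tournament is $2$-colorable (a forest ordering has acyclic backedge graph by Corollary~\ref{nocycle}, and the two colour classes of that forest are transitive), so Proposition~\ref{PRO:hereditary} can never be applied to $\mathcal{F}$, and in particular $D_n$ is \emph{not} a forest tournament for any $n\ge 3$ (the paper records $D_3\notin\mathcal{F}$ in Lemma~\ref{LEM:F} for exactly this reason). Your concrete construction also fails on its own terms: with the ordering ``$x$, then $X$, then $Y$'' for $D_3=\D(I,D_2,D_2)$, the backward edges are the star from $Y$ to $x$ together with one backward edge inside each cyclic triangle, and the star closes a cycle through the component of $Y$ containing its two endpoints (explicitly, if $Y$ is ordered $d,e,f$ with backward edge $fd$, then $x,d,f$ form a triangle in the backedge graph). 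The step ``a star added to a disjoint union of two forests is again a forest'' is false precisely when two leaves of the star lie in the same tree, which is unavoidable here; and the hoped-for equivalence ``forest tournament $\iff$ some ordering has acyclic backedge graph'' cannot rescue you, since an acyclic backedge graph already forces $\chi\le 2$, while $\chi(D_n)=n$. A further sanity check: your argument never genuinely uses finiteness of the heroic set, yet the statement is false for infinite heroic sets — the set of all tournaments of chromatic number three is heroic and contains no forest tournament, as the paper itself points out — so any proof along these lines must break somewhere.

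The paper's route is quite different and essentially forced by the above. It invokes the hereditary classes $\mathcal{C}_{(r,s)}$ of Berger et al., each of unbounded chromatic number (Lemma~\ref{crs}), so every heroic set meets every $\mathcal{C}_{(r,s)}$; finiteness of $\mathcal{H}$ is then used (Lemma~\ref{comparable}) to extract a single member $H\in\mathcal{H}$ lying in $\mathcal{C}=\bigcap_{r,s}\mathcal{C}_{(r,s)}$, and the real work is the equivalence, proved by induction on $|V(T)|$, between membership in $\mathcal{C}$ (existence of an incomparable ordering, Lemma~\ref{stableordering}) and the existence of a forest ordering. If you want to salvage your write-up, the piece worth keeping is your observation that $\mathcal{F}$ is hereditary (that is Proposition~\ref{closed}); everything built on ``$\mathcal{F}$ has unbounded chromatic number'' must be discarded and replaced by an argument exploiting finiteness, such as the paper's compactness-style use of the family $\mathcal{C}_{(r,s)}$.
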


\begin{figure} [ht!]
\centering
\includegraphics[scale=0.6]{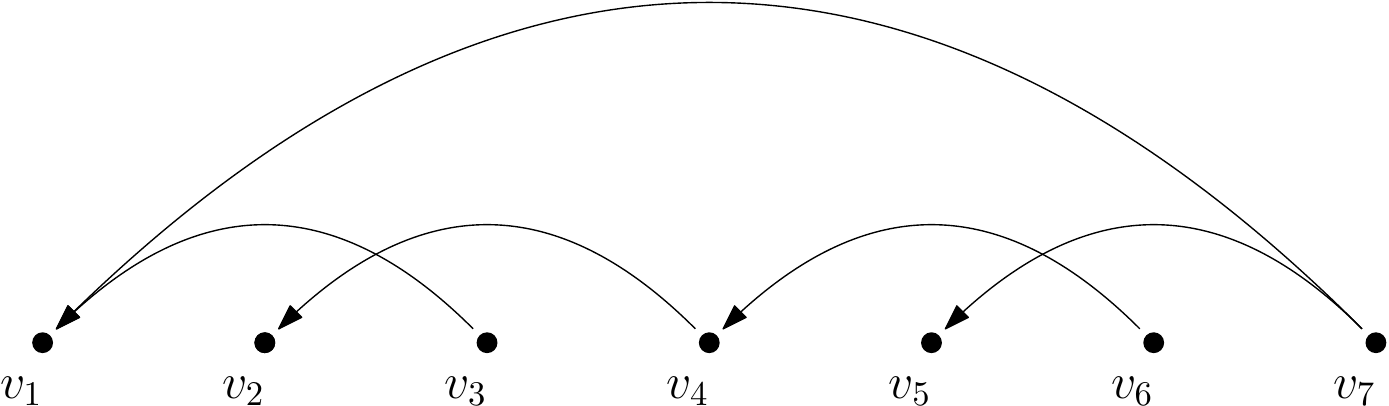}
\caption{A tournament $T$ with ordering $\sigma$ and its backward edges.}
\label{forestexample}
\end{figure}

We will also show some properties of forest tournaments. For example, we will show that the backedge graph $B_{\sigma}(T)$ of a forest tournament $T$ with forest ordering $\sigma$ is a forest, which is the reason that we call this tournament a forest tournament. 
We also prove that every forest tournament has chromatic number at most two, which shows the existence of (infinite) heroic sets not containing any forest tournaments. (e.g. the set of all tournaments with chromatic number three is heroic, but it does not contain any forest tournaments.)

\subsection{Small Heroic sets}

A tournament $H$ is called a \emph{hero} if every $H$-free tournament has bounded chromatic number, that is, $H$ is a hero if and only if $
\{H\}$ is heroic. In~\cite{hero}, Berger et al. explicitly characterized every \emph{hero} as follows:

\begin{THM}[Berger et al.~\cite{hero}]\label{hero}
Let $H$ be a tournament.
\begin{itemize}
\item[(1)] $H$ is a hero if and only if every strong component of $H$ is a hero.
\item[(2)] If $H$ is strongly connected, then $H$ is a hero if and only if $H=\D(I,H_1,H_2)$ where $H_1$ and $H_2$ are heroes and one of them is transitive.
\end{itemize}
\end{THM}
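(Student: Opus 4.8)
The plan is to replace the statement by an explicit description of the heroes and prove that. Let $\mathcal{C}$ be the smallest class of tournaments with: (i) $I\in\mathcal{C}$; (ii) $A\Rightarrow B\in\mathcal{C}$ whenever $A,B\in\mathcal{C}$; (iii) $\D(I,J,B)\in\mathcal{C}$ and $\D(I,B,J)\in\mathcal{C}$ whenever $B\in\mathcal{C}$ and $J$ is transitive. Both parts of the theorem follow once one proves that \emph{a tournament is a hero if and only if it lies in $\mathcal{C}$}. Indeed, a subtournament of a hero is a hero (if $H'\subseteq H$ and $T$ is $H'$-free then $T$ is $H$-free, since any copy of $H$ contains one of $H'$), which gives the forward direction of (1); conversely the strong components of $H$ can be ordered $C_1,\dots,C_k$ with $V(C_i)\Rightarrow V(C_j)$ for $i<j$, so $H=C_1\Rightarrow\cdots\Rightarrow C_k$, and if each $C_i$ is a hero, hence in $\mathcal{C}$, then $H\in\mathcal{C}$ by (ii), giving the backward direction of (1). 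Part (2) is immediate: $\D(I,H_1,H_2)$ with $H_1,H_2$ heroes (hence in $\mathcal{C}$) and one of them transitive is an instance of (iii), and conversely a hero of this shape has $H_1,H_2$ heroes by the first observation.

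So it remains to show (a) every member of $\mathcal{C}$ is a hero, and (b) every hero lies in $\mathcal{C}$. For (a) it suffices, by structural induction, that (i)--(iii) preserve being a hero. Operation (i) is trivial, and the base instance $\D(I,I,I)=C_3$ (the cyclic triangle) is a hero since a $C_3$-free tournament is transitive. For (ii): if $T$ is $(A\Rightarrow B)$-free it cannot contain disjoint $X,Y$ with $X\Rightarrow Y$, $\chi(T|X)$ large enough to force a copy of $A$, and $\chi(T|Y)$ large enough to force a copy of $B$; so bounding $\chi(T)$ reduces to a \emph{colouring lemma}: a tournament of sufficiently large chromatic number must contain such a pair $X\Rightarrow Y$. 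For (iii), say for $\D(I,J,B)$ with $J$ transitive on $t$ vertices (the other is symmetric): writing $N^+(v),N^-(v)$ for the out- and in-neighbourhoods, a $\D(I,J,B)$-free $T$ cannot contain a vertex $v$ with $X\subseteq N^+(v)$, $Y\subseteq N^-(v)$, $X\Rightarrow Y$, $T|X$ having a transitive set of size $t$, and $\chi(T|Y)$ large enough to force a copy of $B$; a bound on $\chi(T)$ then follows from a second colouring lemma producing such a configuration. Transitivity of $J$ is genuinely used: the stronger statement ``large chromatic number forces $\D(I,X,Y)$ with $\chi(T|X),\chi(T|Y)$ both large'' is \emph{false}, since the tournaments $A_n$ are $\D(I,C_3,C_3)$-free yet have unbounded chromatic number.

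For (b), induct on $|V(H)|$ for a hero $H$. If $H$ is not strongly connected, its first strong component $C_1$ satisfies $V(C_1)\Rightarrow(V(H)\setminus V(C_1))$, so $H=C_1\Rightarrow H''$ with $H''=H|(V(H)\setminus V(C_1))$; both $C_1$ and $H''$ are subtournaments of $H$, hence heroes, hence in $\mathcal{C}$ by induction, so $H\in\mathcal{C}$ by (ii). If $H$ is strongly connected with one vertex, $H=I\in\mathcal{C}$. If $H$ is strongly connected with at least two vertices, then by Theorem~\ref{THM_meet1} $\{H\}$ meets $\mathcal{D}$, i.e.\ $H$ is a subtournament of some $D_n$; an easy induction on $n$ using $D_n=\D(I,D_{n-1},D_{n-1})$ shows that every strongly connected subtournament of $D_n$ on at least two vertices equals $\D(I,H_1,H_2)$ with $H_1,H_2$ nonempty. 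Now $H_1,H_2$ are heroes (subtournaments of $H$), hence in $\mathcal{C}$ by induction, and at least one of them is $C_3$-free: otherwise, picking a cyclic triangle inside each of $H_1,H_2$ and adjoining the vertex forming the $I$-part of the trisection yields a copy of $\D(I,C_3,C_3)$ in $H$; but $\D(I,C_3,C_3)$ is not a hero (again witnessed by the $\D(I,C_3,C_3)$-free tournaments $A_n$), and a hero cannot contain a non-hero (such a copy would itself be a hero), a contradiction. A $C_3$-free tournament is transitive, so one of $H_1,H_2$ is transitive and $H=\D(I,H_1,H_2)$ is an instance of (iii), giving $H\in\mathcal{C}$.

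The hard part is twofold. First, the two colouring lemmas behind (a): extracting, from large chromatic number alone, either a complete-to pair of large-chromatic-number subtournaments, or a cyclic-triangle-type trisection whose first non-apex part contains a long transitive set and whose second has large chromatic number. These are the technical core and must be proved directly by domination/splitting arguments, not through the theorem; the $A_n$ example shows how delicate the correct formulation is. Second, establishing that $\D(I,C_3,C_3)$ is not a hero: this requires an explicit family of tournaments of unbounded chromatic number with no copy of $\D(I,C_3,C_3)$, for which I would use the $A_n$ — verifying that no vertex $v$ of $A_n$ has a cyclic triangle $X$ among its out-neighbours and a cyclic triangle $Y$ among its in-neighbours with $X$ complete to $Y$ — together with $\chi(A_n)=n$.
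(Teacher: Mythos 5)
The paper never proves this statement: Theorem~\ref{hero} is imported verbatim from Berger et al.~\cite{hero}, and the only pieces of its machinery that appear here are the quoted Lemmas~\ref{lemma1} and~\ref{lemma2}, used later for Theorem~\ref{THM:growing2}. So the question is whether your argument stands on its own, and it does not. Your reduction to ``hero if and only if $H\in\mathcal{C}$'' is the right frame, and your direction (b) is essentially complete and even pleasantly short: since $\{H\}$ is heroic, Theorem~\ref{THM_meet1} (which rests only on Proposition~\ref{LEM:Dn}, so there is no circularity) places a strongly connected hero inside some $D_n$; the induced-trisection claim for strong subtournaments of $D_n$ is correct; and the fact that one side must be transitive follows because $\D(I,C_3,C_3)=D_3$ is not a hero, witnessed by the $A_n$ via $\chi(A_n)=n$ (Proposition~\ref{LEM:An}) and $D_3\notin\mathcal{A}$ (Lemma~\ref{LEM:A}), both proved in the paper independently of Theorem~\ref{hero}.

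The genuine gap is direction (a), which is where the entire content of the theorem lives. You reduce it to two ``colouring lemmas'' --- that sufficiently large chromatic number forces disjoint $X\Rightarrow Y$ with both $\chi(T|X)$ and $\chi(T|Y)$ large, and that a $\D(I,J,B)$-free tournament of large chromatic number yields a vertex $v$ with a size-$t$ transitive set in $N^+(v)$ complete to a large-chromatic-number part of $N^-(v)$ --- but you prove neither; you explicitly defer them as ``the technical core.'' Neither is routine: the first is not obviously true as stated and in Berger et al.\ the composition step is instead carried by a statement of the shape of Lemma~\ref{lemma1}; the second is exactly where the jewel and jewel-chain analysis (cf.\ Lemma~\ref{lemma2} and its use in Section~\ref{SEC:growing}) is needed, and, as you yourself observe via the $A_n$, no naive ``large $\chi$ forces the configuration'' statement can be correct without exploiting the transitivity of $J$ throughout the argument. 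Until those two lemmas are proved, you have not established even the basic cases that $H_1\Rightarrow H_2$ or $\D(I,L_k,H')$ is a hero, so the proposal is a sound skeleton with the hard half of the proof missing rather than a proof.
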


This result motivated us to the study of small heroic sets, in particular, heroic sets containing two tournaments.
For a set $\mathcal{H}$ consisting of two tournaments to be heroic,  
it must contain some tournament $D$ in $\mathcal{D}$ by Theorem~\ref{THM_meet1}.
If $D$ is a hero, then no matter what the other tournament in $\mathcal{H}$ is, $\mathcal{H}$ is heroic. Thus, the only interesting case is when $D$ is a non-hero.
Every non-hero in $\mathcal{D}$ is characterized as follows:

\begin{restatable}{LEM}{reD}\label{lem:nonhero-d3}
For a tournament $D\in \mathcal{D}$, $D$ is a non-hero if and only if $D$ contains $D_3$.
\end{restatable}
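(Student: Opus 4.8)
The plan is to treat the two implications separately, with Theorem~\ref{hero} doing almost all of the work. For the ``if'' direction I first record the obvious monotonicity principle: if a tournament $H$ is contained in $H'$ and $H$ is a non-hero, then $H'$ is a non-hero, because a family of $H$-free tournaments of unbounded chromatic number is automatically a family of $H'$-free tournaments. So it suffices to show that $D_3$ itself is a non-hero. Since $D_3=\D(I,D_2,D_2)$ has all parts of its trisection nonempty it is strongly connected, so by Theorem~\ref{hero}(2) it is a hero only if it admits a trisection $\D(I,H_1,H_2)$ with one of $H_1,H_2$ transitive. In any such trisection the apex vertex $w$ satisfies $V(H_1)=N^+(w)$, $V(H_2)=N^-(w)$ and $N^+(w)\Rightarrow N^-(w)$; a direct check on the seven vertices of $D_3$ shows that the only vertex with this last property is the apex $v$ of the standard trisection $(\{v\},Y,Z)$, and then $N^+(v)=Y$ and $N^-(v)=Z$ each induce a cyclic triangle, so neither is transitive. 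Hence $D_3$ is a non-hero, and therefore so is every tournament that contains $D_3$.

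For the ``only if'' direction I prove the contrapositive: every $D_3$-free $D\in\mathcal D$ is a hero. Write $D=D_n|S$ with $S\subseteq V(D_n)$ and induct on $n$; the cases $n\le 2$ are immediate since every subtournament of $D_2=\D(I,I,I)$ is a hero. Let $(\{v\},Y,Z)$ be the standard trisection of $D_n=\D(I,D_{n-1},D_{n-1})$ and set $S_Y=S\cap Y$, $S_Z=S\cap Z$; then $D|S_Y$ and $D|S_Z$ are $D_3$-free subtournaments of $D_{n-1}$, hence heroes by the induction hypothesis. If $v\notin S$, or if one of $S_Y,S_Z$ is empty, then $D$ is obtained from the heroes $D|S_Y$, $D|S_Z$ and possibly the single vertex $v$ using only the operation $\Rightarrow$, so each strong component of $D$ is contained in one of these pieces and is a hero, whence $D$ is a hero by Theorem~\ref{hero}(1). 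Otherwise $D=\D(I,D|S_Y,D|S_Z)$ with $S_Y,S_Z$ both nonempty, so $D$ is strongly connected. If both $D|S_Y$ and $D|S_Z$ were non-transitive, then each would contain a copy of the cyclic triangle $D_2$, and hence $D=\D(I,D|S_Y,D|S_Z)$ would contain $\D(I,D_2,D_2)=D_3$, contradicting $D_3$-freeness. Hence one of $D|S_Y$, $D|S_Z$ is transitive; as both are heroes and one is transitive, $D$ is a hero by Theorem~\ref{hero}(2), completing the induction.

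The conceptual heart of the argument, and the reason the threshold is exactly $D_3$ rather than some other tournament in $\mathcal D$, is the final step: $D_3$-freeness of a strongly connected $\D(I,A,B)$ with $A,B\in\mathcal D$ forces one of $A,B$ to be transitive, which is precisely the shape of a strongly connected hero given by Theorem~\ref{hero}(2). I expect the only mildly fussy point to be the bookkeeping of the degenerate cases (empty parts of $S$) in the inductive step; checking that $D_3$ is a non-hero is a short finite computation once Theorem~\ref{hero}(2) is available.
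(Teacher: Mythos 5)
Your proof is correct, but it takes a genuinely different route from the paper's. The paper derives Lemma~\ref{lem:nonhero-d3} from the Berger et al.\ classification of the five minimal non-heroes ($D_3$, $U_3$, $N$, $S_3$, $\D_2$) stated in Section~\ref{SEC:nonhero}: it shows that none of $U_3$, $N$, $S_3$, $\D_2$ lies in $\mathcal{D}$ (a copy in a smallest $D_k$ would admit a trisection with a singleton part; $\D_2$ has no such vertex, and the other three are prime, forcing all three parts to be singletons, contradicting $|V(X)|=5$), so $D_3$ is the unique minimal non-hero in $\mathcal{D}$ and the lemma follows. You bypass the minimal non-hero list entirely: you check directly via Theorem~\ref{hero}(2) that $D_3$ is not a hero (only the apex of $D_3$ has $N^+(w)\Rightarrow N^-(w)$, and its out- and in-neighborhoods are cyclic triangles), and you prove the converse by induction along the recursion $D_n=\D(I,D_{n-1},D_{n-1})$: a $D_3$-free subtournament of $D_n$ either decomposes under $\Rightarrow$, where Theorem~\ref{hero}(1) applies, or has the form $\D(I,A,B)$ with $A,B$ nonempty, where $D_3$-freeness forces one of $A,B$ to be transitive and Theorem~\ref{hero}(2) applies; the degenerate cases are handled correctly. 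The paper's argument is shorter once the five minimal non-heroes are available; yours needs only Theorem~\ref{hero} itself and yields a bit more structural information, namely that every $D_3$-free member of $\mathcal{D}$ is assembled from transitive tournaments by exactly the hero-forming operations of Theorem~\ref{hero}. Both arguments ultimately rest on the Berger et al.\ machinery, so neither is more elementary in substance.
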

We will prove this lemma in section~\ref{SEC:nonhero}.

Let $\mathcal{D}'=\{D\in \mathcal{D} \mid \text{$D$ contains $D_3$}\}$, that is, the set of all non-heroes in $\mathcal{D}$.
For a tournament $F$, we say a tournament $H$ is an \emph{$F$-hero} if there exists $c$ such that every $\{F,H\}$-free tournament $T$ has chromatic number at most $c$.
For example, if $F$ is a hero, then every tournament is an $F$-hero.
By answering the following question, we can characterize all heroic sets consisting of two tournaments.

\begin{QUE}
Let $D\in \mathcal{D}'$. Which tournaments are $D$-heroes?
\end{QUE}

In this paper, we give a necessary condition for a tournament $H$ to be a $D$-hero for $D\in \mathcal{D}'$. Let $L_k$ be a transitive tournament with $k$ vertices.

\begin{THM}\label{THM:main2}
Let $D$ be a tournament in $\mathcal{D}'$. If a tournament $H$ is  a $D$-hero, then $H$ is isomorphic to one of the following. (See Figure~\ref{FIG:maintheorem}.)
\begin{itemize}
\item[1)]  $I$;
\item[2)] $H_1\Rightarrow H_2$ for some $D$-heroes $H_1$ and $H_2$;
\item[3)] $\Delta(I,L_k,H')$ or $\Delta(I,H',L_k)$ for some integer $k$ and some $D$-hero $H'$;
\item[4)] $\D(L_{k_1},I,L_{k_2},L_{k_3}, I)$ or $\D(I,L_{k_1},L_{k_2},I, L_{k_3})$ for some integers $k_1,k_2,k_3$;
\item[5)] $\D(I,H',L_{k_1},L_{k_2},I)$ or $\D(I,L_{k_1},L_{k_2},H',I)$ for some integers $k_1$, $k_2$ and some $D$-hero $H'$;
\item[6)] $\D(I,L_{k_1}, L_{k_2},I,L_{k_3},L_{k_4},I)$ for some integers $k_1,k_2,k_3,k_4$. 
\end{itemize}
\end{THM}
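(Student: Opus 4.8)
The plan is to exploit two facts about a $D\in\mathcal D'$: it contains $D_3$, so every $D_3$-free tournament is $D$-free; and by Theorem~\ref{THM_meet2} the finite heroic set $\{D,H\}$ must contain a forest tournament, which has to be $H$ (not $D$, since $D$ contains $D_3$ and $\chi(D_3)=3$ while every forest tournament has chromatic number at most $2$). So a $D$-hero $H$ is a forest tournament, in particular $\chi(H)\le 2$, and moreover every subtournament of $H$ is again a $D$-hero, since if $H'\sub H$ then every $\{D,H'\}$-free tournament is $\{D,H\}$-free. Thus it suffices to (i) exhibit, for a suitable family $\mathcal N$ of ``obstruction'' tournaments, families of $D_3$-free (hence $D$-free) tournaments of unbounded chromatic number each avoiding a given member of $\mathcal N$ — so no member of $\mathcal N$ is a $D$-hero — and (ii) prove that every tournament containing no member of $\mathcal N$ is of one of the six forms; then a $D$-hero, all of whose subtournaments are $D$-heroes and hence avoid $\mathcal N$, is of one of those forms, its structural pieces being subtournaments of $H$ and hence themselves $D$-heroes.

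The basic construction is the family $A_n$: one shows $\chi(A_n)=n$ and that $A_n$ is $D_3$-free. For the latter I would induct on $n$ using $A_n=\D(I,A_{n-1},I,\dots,I)$; the point is that for every vertex $v$ of $A_n$ at least one of $N^+(v),N^-(v)$ is a transitive set (this property is preserved by the $\D$-construction because the odd parts of a $\D$-partition, and separately the even parts, are each transitively ordered), so $v$ cannot be the apex of a copy of $D_3=\D(I,C_3,C_3)$, which would require directed triangles in both $N^+(v)$ and $N^-(v)$ with the out-triangle complete to the in-triangle. Substituting larger $D_3$-free tournaments into vertices of the $A_n$'s, and into the skeletons of $\D$-partitions, produces the further $D_3$-free unbounded-$\chi$ families needed to certify the remaining members of $\mathcal N$.

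For the structural step, let $H$ contain no member of $\mathcal N$. If $|V(H)|=1$ we are in case~1); if $H$ is not strongly connected, write $H=H_1\Rightarrow H_2$ along its condensation to land in case~2). So assume $H$ is strongly connected with at least three vertices. Using that $H$ is a forest tournament one first shows $H$ admits a $\D$-partition; pick a finest one $H=\D(T_1,\dots,T_{2k-1})$. The obstructions are then invoked to force: $k\le 4$ (a $\D$-partition with nine or more parts contains an obstruction); in the cases $k\in\{3,4\}$ the end parts $T_1$ and $T_{2k-1}$ are single vertices; for $k=4$ every part is transitive (a cyclic triangle sitting in a part of a seven-part $\D$-partition produces an obstruction); and for $k\in\{2,3\}$ at most one part is a non-transitive $D$-hero, and only in a position permitted by~3) or~5). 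Matching the surviving possibilities with $k=2,3,4$ gives exactly forms~3), 4)--5), and~6) respectively (and form~3) also absorbs the strongly connected heroes of Theorem~\ref{hero}).

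The main obstacle is this structural step, and within it two points: (a) showing that a strongly connected $D$-hero must have a $\D$-partition at all — equivalently, that a strongly connected forest tournament with no $\D$-partition contains an obstruction, which I expect to argue from its forest ordering — and (b) determining, case by case according to the width of the $\D$-partition and the positions of any non-transitive parts, the precise tournament that appears and proving that it is not a $D$-hero by exhibiting the right $D_3$-free unbounded-$\chi$ family avoiding it. This extends the strong-component-and-trisection analysis behind Theorem~\ref{hero} substantially, because here the recursive pieces need not be transitive and the relevant $\D$-partitions can have as many as seven parts.
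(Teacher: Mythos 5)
Your opening moves agree with the paper: $\{D,H\}$ is a finite heroic set, so Theorem~\ref{THM_meet2} forces the forest tournament in it to be $H$ (since $D$ contains $D_3$ and $\chi(D_3)=3$), and heredity of $D$-heroes passes the property to all subtournaments of $H$. But the second, decisive ingredient of the paper is missing from your plan. The paper does not certify a finite list $\mathcal N$ of obstructions; it shows directly that $H\in\mathcal A$, because $\mathcal A$ is hereditary with unbounded chromatic number (Proposition~\ref{PRO:hereditary}, Theorem~\ref{THM_meet1}) while $D\notin\mathcal A$ (Lemma~\ref{LEM:A}), and it is membership in $\mathcal A$ --- not forest-ness --- that yields your step (a): a strongly connected member of $\mathcal A$ has a $\D$-partition (Proposition~\ref{PROP:upartition}) and its prime members are exactly the $U_n$ (Proposition~\ref{Uprime}). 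Your plan to extract a $\D$-partition ``from the forest ordering'' cannot work: the five-vertex tournament $N$ is a strongly connected forest tournament (Lemma~\ref{LEM:F}) that is prime and not isomorphic to $U_3$, so it admits no $\D$-partition at all (a trisection part of size at least two would be homogeneous, and a five-part $\D$-partition would force $N\cong U_3$); to exclude it you would need it in $\mathcal N$ together with a proof, which you do not supply. The structure theorem ``$\mathcal N$-free $\Rightarrow$ one of the six forms'' is in fact the bulk of the work (the paper's Theorem~\ref{THM:main2-1}, proved via the prime/substitution decomposition, the bound coming from $U_5\supseteq\D_2\notin\mathcal F$, and Lemmas~\ref{AF3}, \ref{AF5}, \ref{AF7}); you flag it as an unresolved obstacle, and as sketched it is also inaccurate --- e.g.\ ``for $k=3$ the end parts are single vertices'' contradicts outcome 4), $\D(L_{k_1},I,L_{k_2},L_{k_3},I)$.

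The one concrete argument you do give is false. It is not true that every vertex of $A_n$ has a transitive in- or out-neighbourhood: in $A_3$ with $\D$-partition $(\{v_1\},X_1,\{v_2\},X_2,\{v_3\})$, where $X_1,X_2$ induce cyclic triangles, the middle singleton $v_2$ has $X_2\subseteq N^+(v_2)$ and $X_1\subseteq N^-(v_2)$, so both neighbourhoods contain cyclic triangles and your invariant is not preserved by the $\D$-construction. The conclusion ($A_n$ is $D_3$-free) is true, but the paper proves $D_3\notin\mathcal A$ differently, via $\D$-partitions and maximal homogeneous sets (Proposition~\ref{Uhomo}); as written, your certification of even the basic obstruction family fails. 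In short, the missing idea is to place $H$ in the hereditary class $\mathcal A$ itself, so that the problem reduces to characterizing $\mathcal A\cap\mathcal F$; replacing that with an unspecified finite obstruction set leaves both the $\D$-partition step and the case analysis unproven.
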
 
We prove Theorem~\ref{THM:main2} in section~\ref{SEC:mainproof}.

\begin{figure} [ht!]
\centering
\includegraphics[scale=0.6]{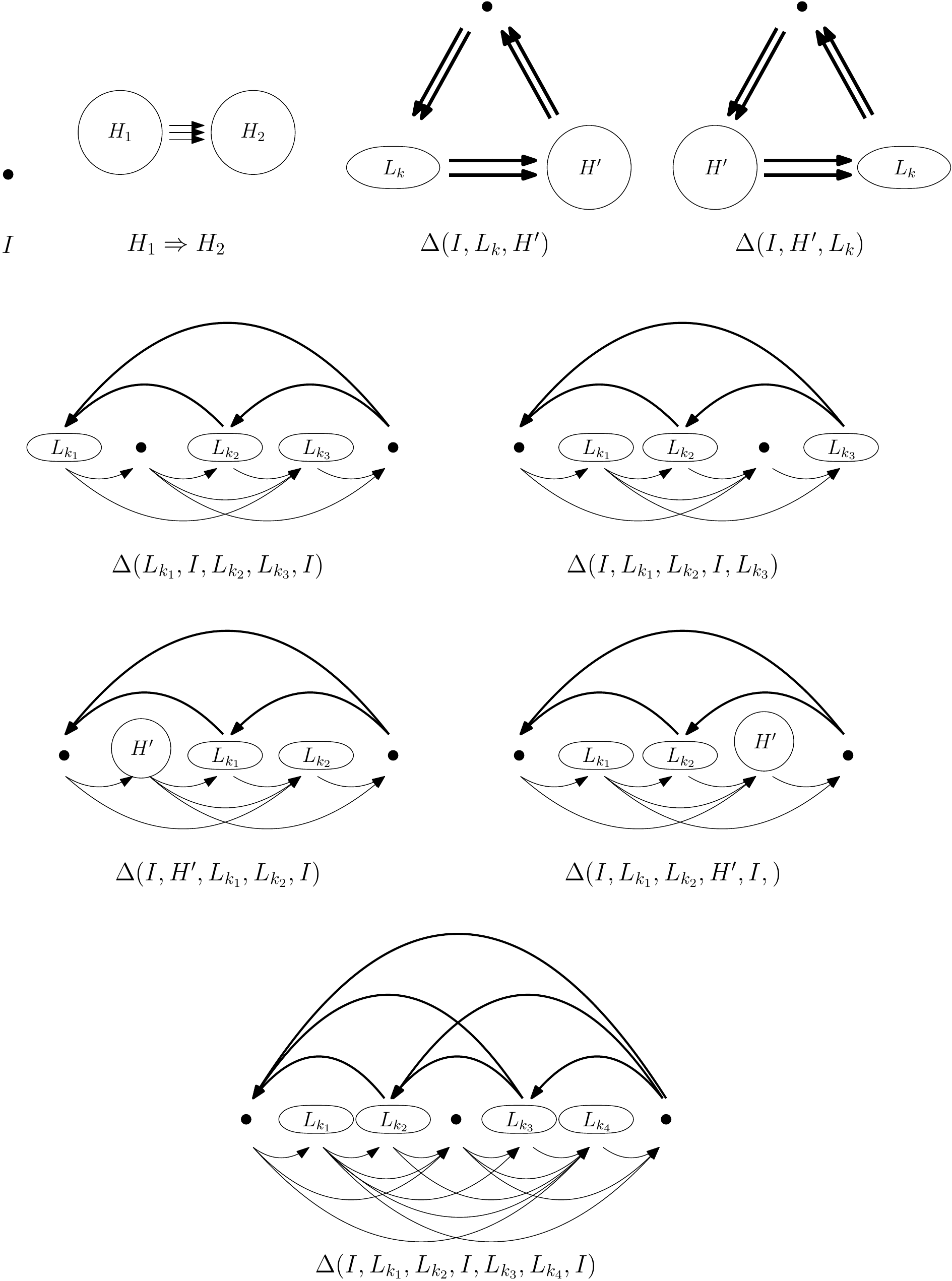}
\caption{Tournaments in Theorem~\ref{THM:main2}
}
\label{FIG:maintheorem}
\end{figure}

\subsection{$D$-heroes for $D\in \mathcal{D}'$}
The first result of Theorem~\ref{hero} also holds for $D$-heroes.

\begin{THM}\label{THM:growing1}
Let $D$ be a tournament in $\mathcal{D}'$. 
Then, a tournament $H$ is a $D$-hero if and only if every strong component of $H$ is a $D$-hero.
\end{THM}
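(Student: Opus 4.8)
First, the ``only if'' direction is immediate, and uses nothing about $\mathcal{D}'$: if $H'$ is a subtournament of a $D$-hero $H$ with constant $c$, then every $\{D,H'\}$-free tournament $T$ is also $\{D,H\}$-free (a copy of $H$ in $T$ would contain a copy of $H'$), so $\chi(T)\le c$; hence $H'$ is a $D$-hero, and in particular so is every strong component of $H$. For the ``if'' direction, write $H=H_1\Rightarrow H_2\Rightarrow\cdots\Rightarrow H_m$, where $H_1,\dots,H_m$ are the strong components of $H$ in the order they inherit from $H$, so that by hypothesis each $H_i$ is a $D$-hero. The plan is to reduce to the \emph{composition lemma}: if $A$ and $B$ are $D$-heroes, then $A\Rightarrow B$ is a $D$-hero. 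Granting it, I induct on $m$: for $m\ge 2$ the tournament $H_2\Rightarrow\cdots\Rightarrow H_m$ has strong components $H_2,\dots,H_m$ (all $D$-heroes) and fewer of them, so it is a $D$-hero by induction, whence $H=H_1\Rightarrow(H_2\Rightarrow\cdots\Rightarrow H_m)$ is a $D$-hero by the composition lemma.

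To prove the composition lemma, fix $c_A$ and $c_B$ such that every $\{D,A\}$-free tournament has chromatic number at most $c_A$ and every $\{D,B\}$-free tournament has chromatic number at most $c_B$. The main step is the following statement $P(S)$, to be established for every $D$-hero $S$ by induction on $|V(S)|$: \emph{for every $c$ there is a constant $c^\ast(S,c)$ such that, for every $D$-free tournament $G$ and every $Z\subseteq V(G)$ with $\chi(G|Z)\ge c^\ast(S,c)$, there exists $W\subseteq Z$ with $G|W\cong S$ and $\chi\bigl(G|(Z\cap O(W))\bigr)\ge c$}, where $O(W)=\{v\in V(G)\setminus W:W\Rightarrow\{v\}\}$ is the common out-neighbourhood of $W$. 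Then the composition lemma follows from $P(A)$: if $G$ is $\{D,A\Rightarrow B\}$-free with $\chi(G)\ge c^\ast(A,c_B+1)$, applying $P(A)$ with $Z=V(G)$ yields a copy $W$ of $A$ with $\chi(G|O(W))\ge c_B+1$; since $G|O(W)$ is $D$-free it contains $B$, so $W$ together with that copy of $B$ is a copy of $A\Rightarrow B$ in $G$, a contradiction. Hence $\chi(G)<c^\ast(A,c_B+1)$.

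It remains to establish $P(S)$. The base case $S=I$ is, after restricting to $G|Z$, the standard fact in tournament colouring that a tournament in which every out-neighbourhood induces a subtournament of chromatic number at most $\lambda$ has chromatic number bounded by a function of $\lambda$ (in the spirit of~\cite{hero}). If $S$ is not strongly connected, say $S=S_1\Rightarrow S_2$ with $S_1,S_2$ proper subtournaments, hence $D$-heroes, then, given $Z$ with $\chi(G|Z)\ge c^\ast(S_1,c^\ast(S_2,c))$, one applies $P(S_1)$ with target $c^\ast(S_2,c)$ to obtain $W_1\subseteq Z$ for which $Z':=Z\cap O(W_1)$ satisfies $\chi(G|Z')\ge c^\ast(S_2,c)$, and then applies $P(S_2)$ to the set $Z'$ to obtain $W_2\subseteq Z'$; since $W_2\subseteq O(W_1)$ we have $W_1\Rightarrow W_2$, so $W:=W_1\cup W_2\subseteq Z$ induces $S_1\Rightarrow S_2=S$, and $Z\cap O(W)=Z'\cap O(W_2)$ has chromatic number at least $c$, as wanted.

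The remaining case of $P(S)$ --- $S$ strongly connected with at least two vertices --- is the step I expect to be the main obstacle. Here I would invoke the structure of $D$-heroes (Theorem~\ref{THM:main2}): such an $S$ is one of the $\D$-type tournaments of forms (3)--(6), built from one-vertex tournaments, transitive tournaments $L_k$, and strictly smaller $D$-heroes glued along a cyclic $\D$-partition. Following Berger et al.'s treatment of strongly connected heroes in the proof of Theorem~\ref{hero}, one first uses the standard fact above (together with its reversal for in-neighbourhoods) to see that a $D$-free tournament of large chromatic number contains a vertex whose out-neighbourhood and in-neighbourhood both have large chromatic number; this pins down the ``central'' one-vertex part of the $\D$-partition of $S$, and the remaining parts are then built recursively --- via $P$ applied to the smaller pieces --- inside suitable neighbourhoods of that vertex. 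The delicate point is to perform these recursive constructions so that all the cyclic domination relations of the $\D$-partition of $S$ hold simultaneously while the common out-neighbourhood of the whole configuration retains large chromatic number; this is a bookkeeping argument tracking in- and out-domination directions, a more elaborate version of the two-part case above.
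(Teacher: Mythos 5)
Your ``only if'' direction and your overall reduction (induct on the number of strong components, so that everything comes down to: if $A$ and $B$ are $D$-heroes then $A\Rightarrow B$ is a $D$-hero) are fine and match the intended structure. But at the composition step you diverge from the paper in a way that leaves a genuine gap. The paper does not reprove anything here: Theorem~\ref{THM:growing1} is obtained directly from the quoted Lemma~\ref{lemma1} of Berger et al., applied with $\mathcal{H}_1=\{A\}$, $\mathcal{H}_2=\{B\}$ and $c=\max\{c_A,c_B,|V(A)|,|V(B)|,3\}$: every $A$-free (resp.\ $B$-free) subtournament of a $\{D,A\Rightarrow B\}$-free tournament $T$ is $\{D,A\}$-free (resp.\ $\{D,B\}$-free), so Lemma~\ref{lemma1} bounds $\chi(T)$ by $(2c)^{4c^2}$, and induction on the strong components finishes the proof. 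You instead set out to prove the composition lemma from scratch via the statement $P(S)$, and that attempt is incomplete in an essential way.

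Concretely: (i) your base case $P(I)$ is the assertion that a tournament in which every out-neighbourhood has chromatic number at most $\lambda$ has chromatic number bounded in terms of $\lambda$; this is not ``standard'' and is not in~\cite{hero} --- it is a genuine theorem (a local-to-global result of Harutyunyan, Le, Thomass\'e and Wu), so at best you are importing heavy machinery without a correct citation. (ii) More seriously, the case of $P(S)$ for strongly connected $S$ is exactly the case you need --- in your induction on $m$ the tournament $A=H_1$ is itself a strong component, hence strongly connected --- and you do not prove it; you only sketch a plan (``a bookkeeping argument tracking in- and out-domination directions''). Moreover $P(S)$ is a strengthening of ``large chromatic number forces a copy of $S$'': it demands a copy whose \emph{common} out-neighbourhood still has large chromatic number, and nothing you write establishes that this strengthening holds for the strongly connected $D$-heroes of Theorem~\ref{THM:main2}; carrying it out would essentially mean redeveloping the jewel/jewel-chain arguments of~\cite{hero} (cf.\ Lemma~\ref{lemma2} and the proof of Theorem~\ref{THM:growing2}), which is precisely the hard content you have skipped. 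So as written the proof does not go through; the short route is to invoke Lemma~\ref{lemma1} as the paper does.
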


Theorem~\ref{THM:growing1} is straightforward by the following lemma proved in~\cite{hero}.
\begin{LEM}[Berger et al. \cite{hero}]\label{lemma1}
Let $\mathcal{H}_1,\mathcal{H}_2$ be sets of tournaments such that every member of $\mathcal{H}_1\cup \mathcal{H}_2$ has at most $c\,(\ge 3)$ vertices. Let $\mathcal{H}=\{H_1\Rightarrow H_2 \mid H_1\in \mathcal{H}_1, H_2\in \mathcal{H}_2\}$. 
For every $\mathcal{H}$-free tournament $T$, if every $\mathcal{H}_1$-free subtournament of $T$ and $\mathcal{H}_2$-free subtournament of $T$ has chromatic number at most $c$, then  the chromatic number of $T$ is at most $(2c)^{4c^2}$.
\end{LEM}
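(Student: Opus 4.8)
The plan is to isolate a purely structural statement about tournaments of large chromatic number, deduce the lemma from it in a couple of lines, and then put all the weight on that structural statement (which is really the content of the result of Berger et al.~\cite{hero}).

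\smallskip
\noindent\textbf{Degenerate reductions.} First I would clear away the trivial cases. If $I$ (the one‑vertex tournament) lies in $\mathcal H_1$, then $\mathcal H\supseteq\mathcal H_2$ since $I\Rightarrow H_2$ is isomorphic to $H_2$, so any $\mathcal H$‑free $T$ is $\mathcal H_2$‑free and $\chi(T)\le c$; symmetrically if $I\in\mathcal H_2$, or if $\mathcal H_1$ or $\mathcal H_2$ is empty. So I may assume every member of $\mathcal H_1\cup\mathcal H_2$ has at least two vertices and (there being finitely many tournaments on at most $c$ vertices) that $\mathcal H_1,\mathcal H_2$ are finite. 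The only consequence of $\mathcal H$‑freeness I will use is: \emph{whenever $X\Rightarrow Y$ in $T$, either $T|X$ is $\mathcal H_1$‑free or $T|Y$ is $\mathcal H_2$‑free} — otherwise a copy of some $H_1\in\mathcal H_1$ inside $X$ together with a copy of some $H_2\in\mathcal H_2$ inside $Y$ would form a copy of $H_1\Rightarrow H_2\in\mathcal H$.

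\smallskip
\noindent\textbf{Reduction to a structural claim.} I claim it suffices to prove the following unconditional fact: \emph{$(\star)$ every tournament $S$ with $\chi(S)>(2c)^{4c^2}$ contains disjoint nonempty $A,B$ with $A\Rightarrow B$, $\chi(S|A)>c$, and $\chi(S|B)>c$.} Granting $(\star)$, the lemma is immediate: suppose $T$ satisfies the hypotheses and $\chi(T)>(2c)^{4c^2}$, and apply $(\star)$ with $S=T$. Since $\chi(T|A)>c$, the hypothesis forces $T|A$ not to be $\mathcal H_1$‑free, and likewise $T|B$ is not $\mathcal H_2$‑free; but $A\Rightarrow B$, contradicting the displayed consequence of $\mathcal H$‑freeness. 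Hence $\chi(T)\le(2c)^{4c^2}$. (Note that this finishing argument also explains why $(\star)$ must be done as stated: under $\mathcal H$‑freeness no copy of an $\mathcal H_1$‑member can have large common out‑neighbourhood, so the large‑$\chi$ left side in $(\star)$ genuinely has to be a large \emph{set}, not a bounded copy.)

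\smallskip
\noindent\textbf{Proving $(\star)$ — the main obstacle.} This is where the work (and the explicit bound) lives. First I would reduce to $S$ strongly connected: writing the strong components as $C_1\Rightarrow\cdots\Rightarrow C_m$ one has $\chi(S)=\max_i\chi(C_i)$, so either two components have $\chi>c$ (and we are done, taking $A=C_i$, $B=C_j$ with $i<j$) or $S$ may be replaced by its unique maximum‑$\chi$ component. For strongly connected $S$ I would run an iterated chromatic‑domination, using the inequality $\chi(X)\le\chi(X\cap N^+(v))+\chi(X\cap N^-(v))$ valid for every $v\in X$: starting from $X=V(S)$, at each step choose $v$ with one of $\chi(X\cap N^+(v))$, $\chi(X\cap N^-(v))$ at least $\chi(X)/2$, pass to a largest‑$\chi$ strong component of that neighbourhood, record whether we moved to the $N^+$‑side or the $N^-$‑side, and peel off the rest. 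After $\sim(2c)^2$ steps the chromatic number has fallen from $>(2c)^{4c^2}$ to something still larger than $c$, while only a bounded amount of peeled material has accumulated (each step offers about $2c$ relevant choices), and the peeled pieces are complete to — or complete from — the vertices recorded so far. The hard part, and the reason $(\star)$ is not a one‑liner, is that following one side blindly only ever yields a large‑$\chi$ set complete to (or from) a \emph{transitive} set, never a pair of large‑$\chi$ sets in a complete relation; so the argument has to interleave the $N^+$‑ and $N^-$‑iterations and then pigeonhole over the $\sim(2c)^2$ recorded steps to find two steps of the same type whose large‑$\chi$ peeled remnants do sit in a complete relation, producing the required $A\Rightarrow B$. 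Quantifying this bookkeeping — iteration depth $\sim(2c)^2$ with branching $\sim 2c$ — is precisely what yields the bound $(2c)^{4c^2}$.
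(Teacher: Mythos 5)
Your reduction step is fine as logic: if every tournament $S$ with $\chi(S)>(2c)^{4c^2}$ contained disjoint $A\Rightarrow B$ with $\chi(S|A),\chi(S|B)>c$, then the lemma would indeed follow in two lines (a small slip aside: $I\Rightarrow H_2$ is not isomorphic to $H_2$, and $\mathcal{H}$-freeness does not imply $\mathcal{H}_2$-freeness when $I\in\mathcal{H}_1$ — but those degenerate reductions are not needed anyway). The genuine gap is that all of the content now sits in the unconditional claim $(\star)$, and you have not proved it. Note that $(\star)$ is at least as strong as the lemma itself — e.g.\ already for $c=1$ it amounts to the statement that a tournament consisting of one cyclic triangle complete to another is a hero, which is part of the very machinery this lemma is used to build — so deferring to $(\star)$ without proof essentially begs the question. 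It is also not clear that $(\star)$ is true with this (or any) bound by elementary means: by dropping the hypotheses (that $T$ is $\mathcal{H}$-free and that $\mathcal{H}_i$-free subtournaments have small chromatic number) you have made the target harder, whereas the known argument of Berger et al., which this paper only cites, uses those hypotheses throughout rather than passing through an unconditional structure theorem.

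Concretely, the sketch of $(\star)$ does not work as described. In the iteration based on $\chi(X)\le\chi(X\cap N^+(v))+\chi(X\cap N^-(v))$, the only completeness relations you ever create are between the current set and the single chosen vertices $v_1,v_2,\ldots$ (each later set lies in prescribed out-/in-neighbourhoods of earlier chosen vertices, and each peeled piece is complete to or from those vertices). Between two peeled pieces from different steps there is no completeness relation at all: the piece peeled at step $j$ lies inside the neighbourhood you followed at step $i<j$, while the piece peeled at step $i$ is the side you abandoned, and the edges between them are uncontrolled. So the final "pigeonhole over the recorded steps to find two large-$\chi$ peeled remnants in a complete relation" has no mechanism behind it — you yourself identify this as "the hard part" and then assert rather than argue it. The accompanying bookkeeping claims ("only a bounded amount of peeled material has accumulated", "each step offers about $2c$ relevant choices") are likewise unsubstantiated: the peeled sets can be arbitrarily large and of large chromatic number, and nothing in the construction bounds them. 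As it stands the proposal proves only the easy implication $(\star)\Rightarrow$ lemma, and the lemma itself remains unproven.
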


We remark that Lemma~\ref{lemma1} also implies (1) of Theorem~\ref{hero}.

In contrast to (1) of Theorem~\ref{hero}, the second result does not hold for $D$-heroes in general. 
(Theorem~\ref{THM:U3} will give an example of a $D$-hero which is strongly connected but does not admit a trisection.)  
However, it turns out that 
(2) of Theorem~\ref{hero} holds for $D$-heroes admitting a trisection.

\begin{THM}\label{THM:growing2}
Let $D$ be a tournament in $\mathcal{D}'$. 
Let $H$ be a tournament admitting a trisection. 
Then, $H$ is a $D$-hero if and only if $H$ is either $\D(I,H',L_k)$ or $\D(I,L_k,H')$ where $k$ is a positive integer and $H'$ is a $D$-hero.
\end{THM}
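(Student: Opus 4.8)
The plan is to prove both directions. For the ``if'' direction, suppose $H=\D(I,H',L_k)$ or $H=\D(I,L_k,H')$ for a $D$-hero $H'$; I want to show $H$ is a $D$-hero. Since $H$ has a trisection $(X,Y,Z)$ with $X\Rightarrow Y$, $Y\Rightarrow Z$, $Z\Rightarrow X$ and one of the three parts a single vertex and one transitive, I would like to invoke the structural machinery used to prove (2) of Theorem~\ref{hero}. The key point is that for a tournament $T$ that is $\{D,H\}$-free, if $T$ has no large transitive set then bounded $\chi$ should follow from the fact that $D\in\mathcal{D}'$ (so $D$ contains $D_3$, hence $D_3$-freeness already forces something), while if $T$ has a large transitive set, one locates a copy of the ``transitive part'' $L_k$ of $H$ and uses the $H'$-hero property inductively on the two sides that would complete the trisection; a $\{D,H\}$-free tournament avoiding $H$ must avoid one of the configurations that builds $H$ around such an $L_k$. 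I would mirror the ``if'' half of the proof of Theorem~\ref{THM:main2} (or of~\cite{hero}) with the extra hypothesis $D$ available, which only makes the hypotheses stronger.

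For the ``only if'' direction, suppose $H$ admits a trisection $(X,Y,Z)$ and is a $D$-hero. By Theorem~\ref{THM:main2}, $H$ is one of the six listed types; I would go through these and use the trisection hypothesis to eliminate all but types~2) and~3), and within type~3) to force the $I$ to be one of the three trisection parts. Concretely: a tournament with a trisection is strongly connected, so type~2) (a nontrivial $\Rightarrow$ decomposition) cannot occur unless it degenerates, and types~4)--6) are $\D$-partitions into $5$ or $7$ parts. The main content is to argue that if such a $5$- or $7$-part tournament also has a trisection, then it must actually be expressible as $\D(I,H',L_k)$ or $\D(I,L_k,H')$ with $H'$ a $D$-hero. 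For this I would examine how a trisection $(X,Y,Z)$ interacts with the finer $\D$-partition: the three ``odd-indexed'' blocks of the $\D$-partition are pairwise mutually nonadjacent in the cyclic orientation, the ``even-indexed'' blocks dominate appropriately, and one checks that a partition into three consistently-oriented parts $X\Rightarrow Y\Rightarrow Z\Rightarrow X$ forces the even blocks to be transitive and clustered so that the whole thing collapses to a $\D(I,\cdot,\cdot)$ form; then the sub-tournament playing the role of $H'$ is a $D$-hero because it is a subtournament of the $D$-hero $H$ (heredity of being a $D$-hero, which is immediate from the definition), and it is nonempty and has a compatible structure. Finally, I would rule out the possibility that $H'$ fails to be a $D$-hero: if it did, one could blow up the relevant block to get $\{D,H\}$-free tournaments of unbounded $\chi$, contradicting that $H$ is a $D$-hero.

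The main obstacle I expect is the ``only if'' direction's case analysis for types~4)--6): showing that a genuine trisection cannot coexist with an irreducible $5$- or $7$-block $\D$-structure, and that when a trisection does exist it pins down exactly the $\D(I,H',L_k)$/$\D(I,L_k,H')$ shape. This is where one must carefully track which unions of $\D$-partition blocks can form the classes $X,Y,Z$ of a trisection, using the orientation constraints (odd-to-odd reversed, anything-to-even forward); I would organize it by first observing that each trisection class must be a union of consecutive $\D$-blocks or an interleaving compatible with the parity rule, and then counting. The ``if'' direction should be comparatively routine given Lemma~\ref{lemma1} and the inductive scheme already in place, since $D$-freeness only helps.
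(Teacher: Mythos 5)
Your ``only if'' direction has a genuine gap exactly where you yourself locate the main difficulty. Routing through the six-type list of Theorem~\ref{THM:main2} forces you to analyze how a trisection can coexist with the $5$- and $7$-block $\D$-partitions of types 4)--6), and you only sketch how you would ``organize'' that case analysis; moreover your proposed first step (each trisection class is a union of consecutive $\D$-blocks, or an interleaving compatible with the parity rule) is itself an unproved and delicate claim. The paper never needs any of this: since $H$ is a $D$-hero, $\{D,H\}$ is a finite heroic set, and because $D$ contains $D_3\notin\mathcal{A}\cup\mathcal{F}$, Theorems~\ref{THM_meet1} and~\ref{THM_meet2} force $H\in\mathcal{A}\cap\mathcal{F}$. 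The hypothesized trisection already writes $H=\D(G_1,G_2,G_3)$, so Lemma~\ref{AF3} applies \emph{directly to the trisection}: one part is a single vertex and another is transitive, and after a cyclic rotation $H=\D(I,H',L_k)$ or $\D(I,L_k,H')$, with $H'$ a $D$-hero by heredity (Observation~\ref{OBS:hero_property}(1)). Your closing ``blow-up'' remark is also superfluous: every $H'$-free tournament is $H$-free, so heredity is immediate, as you note earlier.

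For the ``if'' direction, pointing to the machinery behind (2) of Theorem~\ref{hero} is the right instinct --- the paper's proof is a verbatim adaptation of that argument restricted to $D$-free tournaments --- but your description of it is not a proof and is partly wrong. The actual argument builds $(a,H',L_k)$-jewels with $a=2^k|V(H')|$: one first shows, using maximal disjoint copies of $H'$ and Stearns' theorem (Theorem~\ref{THM:Ramsey}), that a $D$-free tournament with no such jewel has chromatic number at most $a+c$, where $c$ bounds $\chi$ of $\{D,H'\}$-free tournaments; then Lemma~\ref{lemma1} is applied twice to control jewel chains of length four; and the conclusion comes from the jewel-chain lemma, Lemma~\ref{lemma2}, which is the engine of the proof and which your proposal never invokes. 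Your dichotomy ``if $T$ has no large transitive set then bounded $\chi$ follows because $D$ contains $D_3$'' is backwards: $T$ is assumed $D$-free, which is \emph{weaker} than $D_3$-free (every $D_3$-free tournament is $D$-free, not conversely), so this step would fail; $D$-freeness enters only through the constant $c$ above. Finally, there is no ``if half'' of Theorem~\ref{THM:main2} to mirror --- that theorem is purely a necessary condition.
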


Alghough the proof of Theorem~\ref{THM:growing2} is the same as that of (2) of Theorem~\ref{hero} in~\cite{hero},  we give the proof in section~\ref{SEC:growing} for reader's convenience.

The smallest tournament in the list of Theorem~\ref{THM:main2}, which cannot be obtained from Theorem~\ref{THM:growing1} or Theorem~\ref{THM:growing2}, is $\D(I,I, I,I,I)$. 
 We simply denote this tournament by $U_3$. 
In the following theorem, we show that $U_3$ is a $D$-hero for every $D\in \mathcal{D}'$.

\begin{THM}\label{THM:U3}
Let $D\in \mathcal{D}'$. Then, $U_3$ is a $D$-hero.
\end{THM}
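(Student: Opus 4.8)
The plan is to reduce to the case $D=D_n$ and then run an induction on $n$, with a structural consequence of $U_3$-freeness as the main engine. For the reduction, note that by Lemma~\ref{lem:nonhero-d3} every $D\in\mathcal D'$ contains $D_3$, while $D\in\mathcal D$ means $D$ is a subtournament of some $D_m$; since $D_3$ has seven vertices, $m\ge 3$, so every $D\in\mathcal D'$ is a subtournament of $D_n$ for some $n\ge 3$. Then every $D$-free tournament is $D_n$-free, so the class of $\{D,U_3\}$-free tournaments is contained in the class of $\{D_n,U_3\}$-free tournaments, and it suffices to prove that for every $n\ge 3$ there is $c(n)$ bounding the chromatic number of every $\{D_n,U_3\}$-free tournament.

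The key tool is the following reading of $U_3$. From the adjacency table of $U_3=\D(I,I,I,I,I)$, which is isomorphic to its reverse, $U_3$ is precisely: a cyclic triangle $C$, a vertex $a$ with $a\Rightarrow C$, and a vertex $b$ with $b\to a$ that is neither complete to nor complete from $C$. Consequently, if $T$ is $U_3$-free and $C$ is \emph{any} cyclic triangle of $T$, then, partitioning $V(T)\setminus C$ into the set $D(C)$ of vertices complete to $C$, the set $F(C)$ of vertices complete from $C$, and the set $M(C)$ of remaining (``mixed'') vertices, one must have $D(C)\Rightarrow M(C)$ and $M(C)\Rightarrow F(C)$ (a mixed vertex in-adjacent to some vertex of $D(C)$, or out-adjacent to some vertex of $F(C)$, would produce a copy of $U_3$). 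In particular, whenever a cyclic triangle $C$ lies inside $N^+(v)$, the set $N^-(v)$ splits as $A\cup B$ with $A\Rightarrow C$ and $C\Rightarrow B$, so $(\{v\},C,B)$ is a trisection and $\D(I,D_2,T|B)$ is a subtournament of $T$.

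Now induct on $n$. The base case $n\le 3$ is comparatively clean: $\D(I,D_2,S)$ contains $D_3$ exactly when $S$ is non-transitive (indeed $\D(I,D_2,L_k)$ has chromatic number $2$), so in a $\{D_3,U_3\}$-free tournament every set $B$ above is transitive; choosing $C$ inside a highest-chromatic out-neighbourhood and iterating the decomposition — including in the reverse tournament, around the cyclic triangles formed by vertices of $A$, $B$ and $C$ — forces the chromatic numbers of the pieces to collapse, contradicting $\chi(T)$ being large. For the inductive step, take a vertex-critical $\{D_{n+1},U_3\}$-free tournament $T$ with $\chi(T)$ large; use the standard ``transitive chain of blobs'' lemma (for all $k,c$ there is $c'$ so that $\chi(T)\ge c'$ yields disjoint $A_1,\dots,A_k$ with $A_i\Rightarrow A_j$ for $i<j$ and each $\chi(A_i)\ge c$) to locate a vertex $v$ and a cyclic triangle $C\subseteq N^+(v)$ around which $A$, $B$ and $N^+(v)$ all still have large chromatic number; and combine the decomposition with a lemma describing when a $\D$-composition contains $D_{n+1}$ — using $D_{n+1}=\D(I,D_n,D_n)$ and the fact that deleting $k$ vertices from $D_n$ still leaves a copy of $D_{n-k}$ — together with the inductive bound $c(n)$ to reach a contradiction.

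The delicate point is the inductive step. A single application of the decomposition around one cyclic triangle is not enough, because the block $D(C)$ (and, in the reverse tournament, the analogous ``upper'' block) is only loosely constrained; one must iterate the decomposition along a whole family of cyclic triangles living in nested neighbourhoods, while certifying that no copy of $D_{n+1}$ is ever created. Turning the heuristic ``large $\chi$ forces abundant cyclic structure, but $U_3$-freeness forbids that structure unless $D_{n+1}$ appears'' into an explicit bound is where essentially all the work lies, and is likely to need either a well-chosen potential on the decomposition or a sharper structural lemma than the naive estimate $\chi(T)\le \chi(C)+\chi(M(C))+\chi(T|(D(C)\cup F(C)))$. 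By contrast, the reduction, the base case, and the auxiliary facts (that $D_n$ is itself $U_3$-free, and that vertex deletion from $D_n$ drops the index by one) should be routine.
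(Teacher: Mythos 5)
Your opening reduction (replace $D\in\mathcal{D}'$ by some $D_n$, $n\ge 3$, and bound $\{D_n,U_3\}$-free tournaments) is exactly how the paper begins, but the structural engine you then rely on is false. You claim that in a $U_3$-free tournament every cyclic triangle $C$ satisfies $D(C)\Rightarrow M(C)$ and $M(C)\Rightarrow F(C)$. Consider the five-vertex tournament with $C=\{c_1,c_2,c_3\}$, $c_1\to c_2\to c_3\to c_1$, a vertex $a$ complete to $C$, and a vertex $b$ with $b\to a$, $b\to c_1$, $b\to c_2$, $c_3\to b$. Its out-degree sequence is $1,1,2,3,3$, whereas that of $U_3$ is $1,2,2,2,3$, so it is not isomorphic to $U_3$ and hence is $U_3$-free; yet $b\in M(C)$ beats $a\in D(C)$. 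The correct statement is sharper: the configuration (cyclic triangle, dominating vertex $a$, vertex $b$ beating $a$ and mixed on $C$) is a copy of $U_3$ only when $b$ has exactly one out-neighbour in $C$ (dually, exactly one in-neighbour in the reversed picture). So the trisection $(\{v\},C,B)$ you extract from a cyclic triangle in $N^+(v)$ is not justified, and everything downstream of it collapses. Even setting this aside, the proposal is a plan rather than a proof: the base case $n=3$ is only gestured at, and you explicitly concede that the inductive step is where ``essentially all the work lies,'' so no bound $c(n)$ is actually established.

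For comparison, the paper's proof is complete and takes a different route: it uses Liu's characterization of prime $U_3$-free tournaments (Theorem~\ref{gaku}: such a tournament is some $S_m$, or its vertex set splits into $X_1,X_2,X_3$ with $X_1\cup X_2$, $X_2\cup X_3$, $X_3\cup X_1$ transitive), writes a non-prime $\{D_n,U_3\}$-free tournament as a substitution into such a prime tournament $G_0$, shows that $D_n$-freeness prevents two blocks containing $D_{n-1}$ from sitting in a position that would assemble $D_n=\D(I,D_{n-1},D_{n-1})$, and then colours by induction on $|V(T)|$, obtaining the explicit bound $\chi(T)\le 3^{n-2}$. If you want to pursue your local, neighbourhood-decomposition approach, you would first need a correct replacement for the false lemma---in effect, something playing the role of Liu's theorem or a genuine analysis of mixed vertices with two out-neighbours on a triangle---before the iteration and the inductive bookkeeping can even get started.
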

The proof will be given in section~\ref{SEC:proofU3}.

Generalizing the definition of $U_3$, 
let $U_n=\D(I^{(1)},I^{(2)}, \ldots,I^{(2n-1)})$, that is, the tournament with $V(U_n)=\{v_1,v_2,\ldots,v_{2n-1}\}$ such that for $1\le i<j \le 2n-1$, $v_j$ is adjacent to $v_i$ if and only if both $i$ and $j$ are odd. See Figure~\ref{Un_fig}.

If $n\ge 5$, then $U_n$ is not contained in the list in Theorem~\ref{THM:main2}, and if $n\le 2$, then $U_n$ is either a one-vertex tournament or a cyclic triangle, which is a trivial $D$-hero. And by Theorem~\ref{THM:U3}, we know that $U_3$ is a $D$-hero for every $D\in \mathcal{D}'$. The only remaining case is that $n=4$.
So, we finish this section with the following question.
\begin{figure} [ht!]
\centering
\includegraphics[scale=0.55]{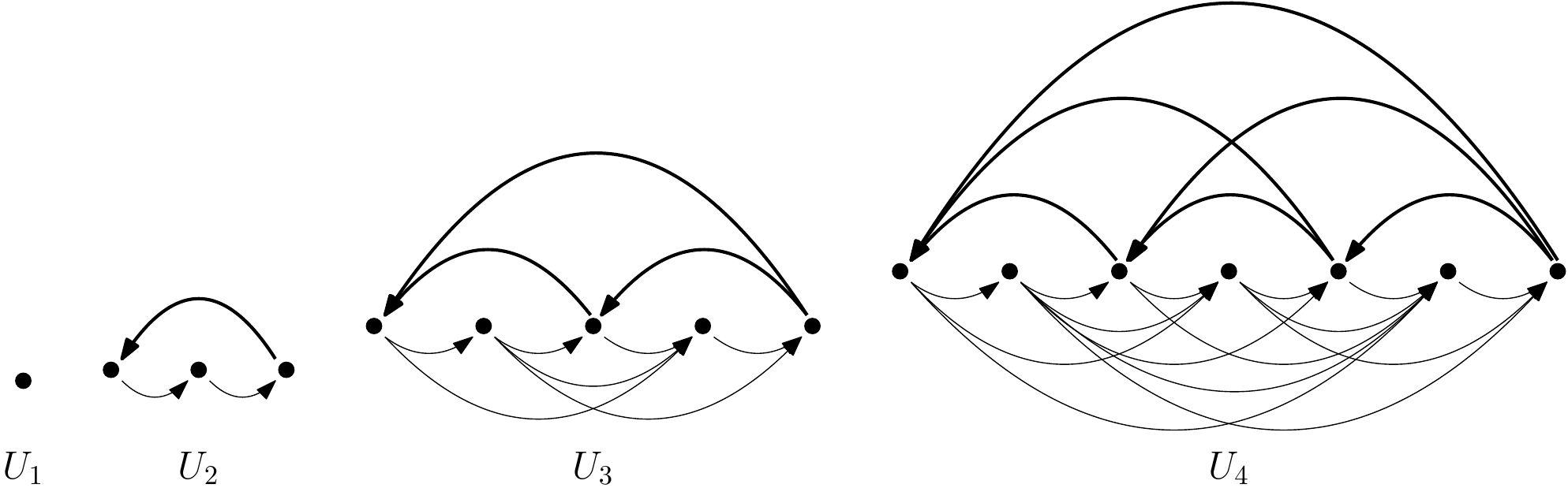}
\caption{$U_n$ for $n=1,2,3,4$. 
}
\label{Un_fig}
\end{figure}

\begin{QUE}\label{question}
For which tournaments $T \in \mathcal{D'}$, is $U_4$ a $T$-hero? In particular, is $U_4$ a $D_3$-hero?
\end{QUE}

\section{Classes of tournaments with unbounded chromatic number}\label{SEC:class_unbounded}
In this section, we prove that $\chi(D_n)=\chi(A_n)=n$, which directly implies Theorem~\ref{THM_meet1}.

\begin{PROP}\label{LEM:Dn}
For every positive integer $n$, $\chi(D_n)=n$.
\end{PROP}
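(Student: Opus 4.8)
The plan is to prove this by induction on $n$, establishing the upper and lower bounds separately. Recall $D_1 = I$ (so $\chi(D_1) = 1$) and $D_n = \Delta(I, D_{n-1}, D_{n-1})$ for $n \ge 2$, meaning $V(D_n)$ partitions as $(X, Y, Z)$ with $|X| = 1$, $D_n|Y \cong D_n|Z \cong D_{n-1}$, and $X \Rightarrow Y$, $Y \Rightarrow Z$, $Z \Rightarrow X$.

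For the upper bound $\chi(D_n) \le n$: given an optimal $(n-1)$-coloring of $D_{n-1}$ (which exists by induction), I would color $D_n$ as follows. Use the same palette of $n-1$ colors on $Y$ via its $D_{n-1}$-coloring; on $Z$, also use the $D_{n-1}$-coloring but this alone could create a directed cycle across $Y \cup Z$ since $Y \Rightarrow Z$. The key observation is that $Y \Rightarrow Z$ is ``one-way,'' so a color class that is transitive in $D_n|Y$ and transitive in $D_n|Z$ remains transitive in $D_n|(Y \cup Z)$: any directed cycle would have to use an edge from $Z$ to $Y$, but there are none. Hence reusing the same $n-1$ colors on $Y$ and $Z$ is valid, and then I add one fresh color $n$ for the single vertex of $X$. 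This gives $\chi(D_n) \le n$.

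For the lower bound $\chi(D_n) \ge n$: suppose for contradiction that $D_n$ has an $(n-1)$-coloring $\phi$. Consider the vertex $x \in X$. Since $Y \Rightarrow Z$, $Z \Rightarrow X$, $X \Rightarrow Y$, the vertex $x$ together with any $y \in Y$ and $z \in Z$ forms a cyclic triangle. The main idea is to find a color $c$ such that $\phi^{-1}(c)$ meets both $Y$ and $Z$, or rather to argue that the color of $x$ cannot be reused ``deep inside'' both $Y$ and $Z$. More carefully: restricted to $Y$, $\phi$ is an $(n-1)$-coloring of $D_{n-1}$, so by induction it must use all $n-1$ colors, and likewise on $Z$; in particular the color $\phi(x)$ appears on some $y \in Y$ and some $z \in Z$, and then $\{x, y, z\}$ is a cyclic triangle all of one color — a contradiction, since a transitive set cannot contain a directed triangle. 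This forces $\chi(D_n) \ge n$.

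The step I expect to require the most care is making the lower-bound induction airtight: I need that \emph{every} $(n-1)$-coloring of $D_{n-1}$ uses all $n-1$ colors, which is immediate from $\chi(D_{n-1}) = n-1$ (a coloring using fewer colors would be an $(n-2)$-coloring), so in fact this is clean. The genuinely delicate point is simply the base case and verifying the transitivity-preservation claim for the upper bound across the $Y \Rightarrow Z$ cut — both are short but must be stated precisely. I do not anticipate a serious obstacle; the structure $D_n = \Delta(I, D_{n-1}, D_{n-1})$ makes both directions fall out of the induction hypothesis together with the elementary fact that a directed triangle is not transitive.
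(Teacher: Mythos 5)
Your proposal is correct and follows essentially the same inductive argument as the paper: reuse the $n-1$ colors on both copies of $D_{n-1}$ (valid since all edges cross from $Y$ to $Z$, so no monochromatic directed cycle can straddle the cut) plus a fresh color for the apex, and for the lower bound use $\chi(D_{n-1})=n-1$ to force the apex's color to appear in both copies, yielding a monochromatic cyclic triangle. Your write-up just makes explicit the transitivity-preservation step that the paper treats as clear.
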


\begin{proof}
We proceed by induction on $n$.
If $n=1$, then $|V(D_1)|=1$, so $\chi(D_1)=1$.

Let $n\ge 2$, and suppose $\chi(D_k)=k$ for all $k <n$. 
Let $(X_1,X_2,X_3)$ be a trisection of $D_n$ such that $|X_1|=1$ and  $X_2$ and $X_3$ induce $D_{n-1}$.  Let $X_1=\{x_1\}$ and $\phi_i:X_i \to [n-1]$ be an $(n-1)$-coloring of $D_n|X_i$ for $i=2,3$. Such colorings exist by the induction hypothesis. Let $\phi:V(D_n)\to [n]$ be a map such that $\phi(x_1)=n$ and for $v\in V_i$, $\phi(v)=\phi_i(v)$ for $i=1,2$.
Then, clearly, $\phi$ is an $n$-coloring of $D_n$, so $\chi(D_n) \le n$.

To show $\chi(D_n)\ge n$, suppose there exists an $(n-1)$-coloring $\psi: V(D_n) \to [n-1]$ of $D_n$. Since $\chi(D_{n-1})=n-1$, it follows that $|\psi(X_2)|=|\psi(X_3)|=n-1$. We may assume that $\psi(x_1)=n-1$. Let $x_2\in X_2$ and $x_3 \in X_3$ be vertices with $\psi(x_2)=\psi(x_3)=n-1$. Such $x_2$ and $x_3$ exist as $|\psi(X_2)|=|\psi(X_3)|=n-1$. 
Then, $\{x_1,x_2,x_3\}$ induces a monochromatic cyclic triangle in $D_n$ which yields a contradiction. Therefore $D_{n}$ is not $(n-1)$-colorable, implying that $\chi(D_n)= n$. This completes the proof.
\end{proof}

\begin{PROP}\label{LEM:An}
For every positive integer $n$, $\chi(A_n)=n$.
\end{PROP}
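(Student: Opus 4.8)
The plan is to prove $\chi(A_n) = n$ by induction on $n$, following the same template as the proof of Proposition~\ref{LEM:Dn}. The base case $n = 1$ is trivial since $A_1$ has a single vertex. For the inductive step, fix $n \ge 2$, assume $\chi(A_k) = k$ for all $k < n$, and recall the construction $A_n = \D(I^{(1)}, A_{n-1}^{(1)}, I^{(2)}, A_{n-1}^{(2)}, \ldots, A_{n-1}^{(n-1)}, I^{(n)})$ with its associated $\D$-partition $(V_1, \ldots, V_{2n-1})$, where the odd-indexed parts are singletons and the even-indexed parts induce copies of $A_{n-1}$.

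For the upper bound $\chi(A_n) \le n$, first I would color each copy $A_{n-1}^{(i)}$ (sitting in $V_{2i}$) with colors $\{1, \ldots, n-1\}$ using the inductive hypothesis. It remains to color the $n$ singleton vertices $v_1 \in V_1, v_3 \in V_3, \ldots, v_{2n-1} \in V_{2n-1}$. The key structural fact here is that, by the $\D$-partition edge rules, the set of odd-indexed singletons $\{v_1, v_3, \ldots, v_{2n-1}\}$ induces a transitive tournament (between two odd indices $i < j$, $v_j$ is complete to $v_i$, so this set is linearly ordered as $v_{2n-1}, \ldots, v_3, v_1$). So I can give all $n$ of these singletons a single new color $n$ — wait, that uses color $n$ for all of them, which is fine since they form one transitive set; but I must double check that nothing forces extra colors. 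Actually the cleanest choice: color the singletons with color $n$, and color each $A_{n-1}^{(i)}$ with $\{1, \ldots, n-1\}$. Each color class is then transitive (the singleton class is transitive as just argued; each of the other $n-1$ classes is a disjoint union of transitive sets, one from each $A_{n-1}^{(i)}$, and disjoint unions within a $\D$-partition — hmm, these copies are linked by complete-digraph relations, so I need that a union of transitive sets across different $V_{2i}$'s is still transitive). This last point needs care: a set meeting several even parts need not be transitive in general, so the upper bound argument likely needs to color the singletons more cleverly — e.g. using the recursive coloring of $A_{n-1}$ shifted appropriately, analogous to how the $D_n$ proof only had two copies. I expect this to require a short separate argument, perhaps coloring $A_{n-1}^{(i)}$ with colors $\{1,\dots,n-1\}$ but using the fact that for even parts $V_{2i}$ the whole collection is ordered so that across parts we get no cycles, combined with giving the singletons color $n$.

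For the lower bound $\chi(A_n) \ge n$, suppose toward a contradiction that $\psi$ is an $(n-1)$-coloring of $A_n$. Since $\chi(A_{n-1}^{(i)}) = n-1$ by induction, every color in $[n-1]$ appears in each copy $A_{n-1}^{(i)}$, in particular in $A_{n-1}^{(1)}$ and $A_{n-1}^{(2)}$. I want to extract a monochromatic directed triangle. The relevant configuration: pick a singleton vertex, say $v_3 \in V_3$; then $V_2 \Rightarrow V_3$ and $V_3 \Rightarrow V_4$ and $V_4 \Rightarrow V_2$ is false — I need to locate an actual trisection inside the $\D$-partition. Indeed $(V_1, V_2, V_3 \cup V_4 \cup \cdots \cup V_{2n-1})$ or better some three consecutive-type blocks form a trisection; concretely $V_{2i-1} \Rightarrow V_{2i}$, $V_{2i} \Rightarrow V_{2i+1}$, and $V_{2i+1} \Rightarrow V_{2i-1}$ (two odd indices), so $(V_{2i-1}, V_{2i}, V_{2i+1})$ — no wait, I need a cyclic pattern on all three. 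Let me use: $V_1 \Rightarrow V_2$ (even), $V_2 \Rightarrow V_3$ (even), and $V_3 \Rightarrow V_1$ (both odd). So $(V_1, V_2, V_3)$ is a trisection, but $V_2$ alone is only one copy of $A_{n-1}$. The honest approach: $(V_1, \ V_2, \ V_3 \cup V_4 \cup \dots \cup V_{2n-1})$ — check $V_1 \Rightarrow V_2$; $V_2 \Rightarrow V_j$ for all $j > 2$ (since $2$ is even); and is $V_3 \cup \dots \cup V_{2n-1} \Rightarrow V_1$? Between index $1$ and any odd $j \ge 3$: $v_j \Rightarrow v_1$, good; between $1$ and any even $j$: $V_1 \Rightarrow V_j$, bad. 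So that is not a trisection either. Instead I would take the trisection $(V_1, \ V_2 \cup V_3 \cup \dots \cup V_{2n-2}, \ V_{2n-1})$ or iterate: the cleanest is probably to use that $A_n$ contains a trisection $(\{v_1\}, \ W, \ \{v_{2n-1}\} \cup \text{something})$ isolating one singleton and one full copy of $A_{n-1}$, mirroring the $D_n$ proof. The main obstacle is precisely this: unlike $D_n$ which is literally $\D(I, D_{n-1}, D_{n-1})$ with a clean trisection, $A_n$'s defining partition has $2n-1$ parts, so I must carefully identify which sub-blocks form a genuine trisection containing two distinct copies of $A_{n-1}$, then find vertices $x_1$ (a singleton), $x_2 \in A_{n-1}^{(i)}$, $x_3 \in A_{n-1}^{(j)}$ with $\psi(x_1) = \psi(x_2) = \psi(x_3)$ lying one in each part of that trisection, yielding a monochromatic cyclic triangle and the desired contradiction. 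Verifying the trisection edge relations from the $\D$-partition rules and the parity bookkeeping is the delicate part; once that is set up, the pigeonhole argument on color classes (each copy of $A_{n-1}$ uses all $n-1$ colors) closes the proof exactly as in Proposition~\ref{LEM:Dn}.
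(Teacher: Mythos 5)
Your upper bound starts from the right coloring (the same one the paper uses: color all $n$ singleton vertices with the new color $n$, and color each copy of $A_{n-1}$ with $[n-1]$ by induction), but the worry you raise and leave unresolved --- that a color class meeting several even parts ``need not be transitive in general'' --- is in fact a non-issue, and resolving it is all that was missing. By the $\D$-partition rules, for two even positions $2a<2b$ the earlier copy is complete to the later one (since at least one index is even), so the copies $X_1,\ldots,X_{n-1}$ are linearly ordered by completeness. A union of transitive sets, one from each copy, is therefore an ordered concatenation: every cross edge goes from a lower-indexed copy to a higher-indexed one, so any directed cycle would have to live inside a single copy, contradicting transitivity there. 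No ``cleverer'' coloring of the singletons is needed; you only needed to verify this fact, and hedging it leaves the upper bound incomplete (though easily repairable).

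The lower bound is where the genuine gap lies: the configuration you are hunting for does not exist. You aim for a monochromatic cyclic triangle with one singleton $x_1$ and vertices $x_2\in A_{n-1}^{(i)}$, $x_3\in A_{n-1}^{(j)}$ from two \emph{distinct} copies, mimicking the $D_n$ proof. But a singleton $v_p$ (at odd position $2p-1$) is complete to exactly the copies $X_i$ with $i\ge p$ and complete from the copies $X_j$ with $j<p$, while copies always point from smaller index to larger; so a directed triangle $v_p\to x_2\to x_3\to v_p$ with $x_2\in X_i$, $x_3\in X_j$ would force $i\ge p$, $j<p$ and $i<j$ simultaneously --- impossible. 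This is why every trisection you tried to assemble failed. The paper's argument pigeonholes differently: with only $n-1$ colors available, two of the $n$ singletons $v_p,v_q$ ($p<q$) receive the same color; since $A_n|X_p\cong A_{n-1}$ has chromatic number $n-1$, it uses every color, so some $y\in X_p$ has that color as well; and $\{v_p,y,v_q\}$ is a cyclic triangle because $v_p\Rightarrow X_p$, $X_p\Rightarrow v_q$ (position $2p<2q-1$), and $v_q$ beats $v_p$ (both odd positions). Your proposal does not contain this two-singletons-plus-one-copy-vertex idea, and without it the lower bound as sketched cannot be completed.
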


\begin{proof}
We proceed by induction on $n$.
If $n=1$, then $|V(A_1)|=1$, so $\chi(A_1)=1$.

Let $n\ge 2$, and assume the chromatic number of $A_k$ is equal to $k$ for all $k<n$.
Let $(\{v_1\},X_1,\{v_2\},X_2,\ldots,X_{n-1},\{v_n\})$ be a $\D$-partition of $A_n$ where $A_n|X_j$ is isomorphic to $A_{n-1}$ for $1\le j \le n-1$.
Let $\phi_j: X_j \to [n-1]$ be an $(n-1)$ coloring of $A_n|X_{j}$ for $j=1,2,\ldots,n-1$. 
Then, the map $\phi:V(A_n) \to [n]$ defined as, $\phi(v_i)=n$ for $i=1,2,\ldots,n$ and $\phi(v)=\phi_j(v)$ if  $v\in X_j$ for $j=1,2,\ldots,n-1$, is an $n$-coloring of $A_n$.

To prove that $A_n$ is not $(n-1)$-colorable, let us assume that there exists an $(n-1)$-coloring $\psi:V(A_n) \to [n-1]$ of $A_n$. 
Since $\psi$ is an $(n-1)$-coloring, there exist two vertices $v_p,v_q$ with $\psi(v_p)=\psi(v_q)$ and $p<q$ by the pigeonhole principle. 
We may assume that $\psi(v_p)=\psi(v_q)=n-1$.
Since $A_{n}|X_p$ is not $(n-2)$-colorable, it follows that $\psi(X_p)=[k-1]$, and there exists $y \in X_p$ such that $\psi(y)=n-1$. 
Then, $\{y,v_p,v_q\}$ induces a monochromatic cyclic triangle, a contradiction.
This completes the proof.
\end{proof}

In the remaining of this section, we investigate properties of tournaments in $\mathcal{A}$.

\begin{PROP}\label{PROP:upartition}
If a tournament $T\in \mathcal{A}$ is strongly connected, then there exists a $\D$-partition of $T$.
\end{PROP}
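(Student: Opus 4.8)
The plan is to prove, by induction on $n$, that every strongly connected subtournament of $A_n$ has a $\D$-partition; applying this to any $n$ with $T$ isomorphic to a subtournament of $A_n$ then gives the proposition. Identify $T$ with a subtournament of $A_n$ and write the top-level $\D$-partition as $A_n=\D(\{a_1\},B_1,\{a_2\},B_2,\ldots,B_{n-1},\{a_n\})$, each $B_i\cong A_{n-1}$; put $R=V(T)$, $S=\{i:a_i\in R\}$, and $C_i=R\cap V(B_i)$. The one-vertex case is degenerate, and if $R$ meets only one of the $2n-1$ top-level parts then either $|V(T)|=1$ or $R\subseteq V(B_i)$ for some $i$, whence $T$ is a strongly connected subtournament of $A_{n-1}$ and we are done by induction. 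So we may assume $R$ meets at least two top-level parts.

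Call the nonempty sets among the $\{a_i\}$ ($i\in S$) and the $C_i$ the \emph{blobs}. Any two blobs lie in distinct parts of the $\D$-partition of $A_n$, so any two are complete to one another in one direction; hence the blobs are the vertex classes of a blow-up of a tournament $Q$, and $T$ is exactly this blow-up. Since $T$ is strongly connected, so is $Q$. List the blobs in the left-to-right order in which they occur in the $\D$-partition of $A_n$; as a blob $\{a_i\}$ occupies the odd position $2i-1$ and a blob $C_i$ the even position $2i$, every arc of $Q$ runs forward except those between two blobs of the form $\{a_i\},\{a_j\}$, which run backward. Thus, calling a blob \emph{odd-type} if it is some $\{a_i\}$ and \emph{even-type} if it is some $C_i$, we see that a later odd-type blob dominates every earlier odd-type blob, while an even-type blob is dominated by every earlier blob and dominates every later blob.

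Next I merge each maximal run of consecutive same-type blobs into a single class. One checks directly that merging a run of odd-type blobs produces a class that interacts with every other class exactly as a single odd-type blob would (and is internally a transitive tournament), and likewise an even-type run produces a class interacting as a single even-type blob (internally a linear sum); so after merging the classes alternate in type. Because $Q$ is strongly connected and has at least two blobs, its leftmost and rightmost blobs cannot be even-type --- an even-type leftmost blob would be a source of $Q$, a rightmost one a sink --- so they are odd-type; and the blobs are not all odd-type, for then $Q$ would be a transitive tournament on $\{a_i:i\in S\}$ and not strongly connected. Hence the merged classes form a sequence $Z_1,\ldots,Z_{2m-1}$ with $m\ge 2$ and $Z_\ell$ odd-type exactly when $\ell$ is odd, and a case check on the parities of $s<t$ shows $(Z_1,\ldots,Z_{2m-1})$ meets the defining conditions of a $\D$-partition of $T$, completing the induction.

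The subtle part --- and essentially the only place strong connectivity enters --- is the merging step together with the claim that the two extreme blobs of $Q$ are odd-type; the main thing to be careful about is keeping the left-to-right (position) order of the blobs consistent through the merging, so that the parity bookkeeping in the final verification is correct. The remaining ingredients (the blow-up description of $T$, the base case, and the recursion into $A_{n-1}$) are routine.
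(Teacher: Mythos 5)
Your proof is correct and takes essentially the same route as the paper: both arguments read a $\D$-partition of $T$ off the top-level $\D$-partition of $A_n$, invoking strong connectivity in exactly the two places you do (the extreme classes cannot be copies of $A_{n-1}$, i.e.\ no $T$-vertices precede the first or follow the last spine vertex of $T$, and not everything is spine), the paper merely replacing your induction on $n$ by choosing $n$ minimal. The only substantive difference is the shape of the output: the paper keeps the spine vertices of $T$ as singleton odd parts and lumps everything between consecutive ones into the even parts (tolerating empty parts), whereas your run-merging yields nonempty parts but possibly non-singleton transitive odd classes; since Propositions~\ref{Uhomo} and~\ref{Uprime} later quote this proposition in the singleton-odd-part form, it is worth noting that your merged odd classes can be split back into singletons at the cost of inserting empty even parts, which is the convention the paper's own proof implicitly uses.
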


\begin{proof}
Take the minimal $n$ such that $A_n$ contains $T$. 
We consider a $\D$-partition  $(\{v_1\},X_1,\{v_2\},X_2,\ldots,X_{n-1},\{v_n\})$ of $A_n$ where $A_n|X_j$ is isomorphic to $A_{n-1}$ for $1\le j \le n-1$.

Let $B=V(T)\cap \{v_1,v_2,\ldots,v_n\}$. 
If $B$ is empty, 
then let $m$ be the minimum such that $V(T)\cap X_m \neq \emptyset$. 
Since $V(T) \not \subseteq X_m$ by the minimality of $n$, it follows that $V(T) \setminus X_m$ is not empty.
So, $V(T)\cap X_m$ is complete to $V(T)\setminus X_m$ in $T$, which yields a contradiction that $T$ is strongly connected. Therefore, $B\neq \emptyset$. 

Let $B=\{v_{i_1},v_{i_2},\ldots,v_{i_k}\}$ with $i_1<i_2<\cdots <i_k$. 
Observe that $V(T)\setminus B \subseteq \bigcup_{j=i_1}^{i_k-1}X_{j}$ since  $T$ is strongly connected. 
For $1\le j \le k-1$, 
let $Y_j=V(T)\cap \left( \bigcup_{s=i_j}^{i_{j+1}-1}X_s\right)$. 
Then, $v_{i_j}$ is complete to $Y_{j'}$ for $j\le j'\le k-1$
and complete from $Y_{j''}$ for $1\le j'' \le j-1$.
So, $(\{v_{i_1}\},Y_1 ,\{v_{i_2}\},Y_2,\ldots,Y_{k-1},\{v_{i_k}\})$ is a $\D$-partition of $T$. This completes the proof.
\end{proof} 

For a tournament $T$, a subset $S \subseteq V(T)$ and a vertex $v$ outside of $S$, we say \emph{$v$ is mixed on $S$}, if $v$ has both an out-neighbor and an in-neighbor in $S$. A subset $S$ of $V(T)$ with $1<|S|<|V(T)|$ is called \emph{homogeneous} if every vertex outside of $S$ is not mixed on $S$. 

\begin{PROP}\label{Uhomo}
Let $T$ be a strong tournament in $\mathcal{A}$. If $S$ is a maximal homogeneous set of $T$ and  $(\{v_1\},X_1,\{v_2\},X_2,\ldots,X_{n-1},\{v_n\})$ is a  $\D$-partition of $T$, then $S=X_k$ for some $1\le k \le n-1$.
\end{PROP}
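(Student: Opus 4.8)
The plan is to exploit the near-linear structure encoded by the $\D$-partition, together with one logical principle used over and over: if a vertex $u$ has both an out-neighbour and an in-neighbour in $S$, then $u\in S$, since otherwise $u$ is mixed on $S$, contradicting homogeneity. Fix the vertex ordering $\sigma=v_1,X_1,v_2,X_2,\ldots,X_{n-1},v_n$ (each $X_j$ ordered arbitrarily), and for $a\le b$ write $[v_a,v_b]$ for the $\sigma$-interval $\{v_a,\ldots,v_b\}\cup X_a\cup\cdots\cup X_{b-1}$. In this language the $\D$-partition rules read: $v_j\Rightarrow v_i$ whenever $i<j$; $v_a\Rightarrow X_b$ iff $a\le b$; and $X_a\Rightarrow X_b$ iff $a<b$. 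Two preliminary points: (i) whenever $|X_j|\ge2$, the set $X_j$ is itself homogeneous, because by these rules every vertex outside $X_j$ --- a singleton $v_i$, or a vertex of some $X_{j'}$ --- is complete to, or complete from, all of $X_j$; and (ii) a $\D$-partition is a partition into \emph{nonempty} parts, so every $X_j$ is nonempty. Granting the main claim that $S\subseteq X_j$ for some $j$, we finish: then $2\le|S|\le|X_j|<|V(T)|$ (the last strict inequality since $v_1\notin X_j$), so $X_j$ is homogeneous by (i), and maximality of $S$ forces $S=X_j$. It remains to prove that \textbf{every homogeneous $S$ with $|S|\ge2$ lies inside a single $X_j$}.

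\textbf{Step 1: $S$ contains no singleton $v_c$.} Suppose $v_c\in S$. If $S$ also contains a singleton $v_d$ with $c<d$: every $v_i$ with $c<i<d$ beats $v_c$ and loses to $v_d$, so $v_i\in S$; every $x\in X_i$ with $c\le i\le d-1$ loses to $v_c$ and beats $v_d$, so $x\in S$; hence $[v_c,v_d]\subseteq S$. If $c\ge2$, picking $x\in X_c\subseteq S$ (using $X_c\ne\emptyset$) shows $v_{c-1}$ beats $x$ and loses to $v_d$, so $v_{c-1}\in S$; iterating down the $v_i$'s gives $v_1\in S$ and $[v_1,v_d]\subseteq S$. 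Symmetrically, if $d\le n-1$, using $X_{d-1}\ne\emptyset$ shows $v_{d+1}\in S$, and iterating up gives $v_n\in S$. Then $[v_1,v_n]=V(T)\subseteq S$, contradicting $|S|<|V(T)|$. If instead $v_c$ is the only singleton in $S$, take $w\in S\setminus\{v_c\}$, so $w\in X_e$; if $e\ge c$ the singleton $v_{e+1}$ beats $v_c$ and loses to $w$, and if $e<c$ the singleton $v_e$ beats $w$ and loses to $v_c$ --- either way a singleton distinct from $v_c$ lies in $S$, a contradiction. So $S$ contains no singleton.

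\textbf{Step 2: $S$ meets only one block $X_j$.} If $S$ met $X_j$ and $X_{j'}$ with $j<j'$, choose $x\in X_j\cap S$ and $x'\in X_{j'}\cap S$; the singleton $v_{j+1}$ exists (as $j+1\le j'\le n-1$), loses to $x$, and beats $x'$ (since $j+1\le j'$), hence lies in $S$ --- contradicting Step 1. Therefore $S$ lies inside a single $X_j$, and the reduction above completes the proof.

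The step I expect to be the main obstacle is the ``spreading'' argument inside Step 1 (the case of two singletons in $S$), and in particular its dependence on observation (ii): nonemptiness of the $X_j$ is indispensable. Indeed, if empty parts were permitted one can exhibit a strong tournament in $\mathcal{A}$ carrying a special-form $\D$-partition with an empty $X_j$ whose unique maximal homogeneous set is $\{v_c,v_{c+1}\}$, which is not any $X_k$; so the statement really does hinge on reading ``$\D$-partition'' as a partition into nonempty parts. With (ii) available, Step 1 is routine apart from care with the boundary indices ($c=1$, $d=n$, $e=n-1$) and with the order in which vertices are pulled into $S$; Step 2 and observation (i) are immediate from the adjacency rules.
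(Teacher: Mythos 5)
Your proof is correct: the adjacency rules you extract from the $\D$-partition are right, Step 1 (no singleton $v_c$ lies in $S$, via the interval-spreading argument and the single-singleton case), Step 2 ($S$ meets only one $X_j$, via the mixed vertex $v_{j+1}$), and the final reduction through homogeneity of $X_j$ plus maximality all check out, including the boundary indices. Compared with the paper's proof, the skeleton is the same (exclude the singletons, confine $S$ to one $X_j$, finish by maximality), but the two key steps are carried out differently. The paper dismisses $S\subseteq\{v_1,\ldots,v_n\}$ as clear, kills $S\cap\{v_1,\ldots,v_n\}\neq\emptyset$ in three lines by noting that $v_1$ and $v_n$ are mixed on $\{x,y\}$ (with $x\in X_m\cap S$, $y$ a singleton in $S$) and then that every vertex outside $S$ is mixed on $\{v_1,v_n\}$; and it concludes $S\subseteq X_m$ by asserting that otherwise $T|S$ would not be strongly connected. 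You instead prove the no-singleton claim from scratch -- which is where nonemptiness of the parts is genuinely used, and your caveat is accurate: with an empty part allowed, $\D(I,I,L_2)$ viewed with the partition $(\{v_1\},X_1,\{v_2\},\emptyset,\{v_3\})$ is a strong member of $\mathcal{A}$ whose unique maximal homogeneous set $\{v_2,v_3\}$ is no $X_k$ (the paper's ``clearly $S\not\subseteq B$'' also tacitly uses nonemptiness) -- and you replace the strong-connectivity appeal by the direct observation that $v_{j+1}$ would be mixed on $S$. That replacement is a genuine gain in rigor: the paper's step implicitly assumes $T|S$ is strongly connected, which is never justified and is not automatic for maximal homogeneous sets (e.g.\ $\{b_1,b_2\}$ in $\D(I,L_2,I)$ induces $L_2$), whereas your argument needs nothing beyond Step 1. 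The trade-off is length: the paper's $v_1,v_n$ trick is much shorter than your Step 1 bookkeeping, so each approach buys something -- brevity for the paper, self-containedness and airtightness for yours.
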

\begin{proof}
Let $B=\{v_1,v_2,\ldots,v_{n}\}$. Clearly, $S\not\subseteq B$.
Choose the smallest $m$ such that $X_m \cap S \neq \emptyset$, and let $x \in X_m \cap S$.

We claim $S\cap B= \emptyset$.
Suppose $S \cap B\neq \emptyset$, and let  $y \in S\cap B$. Since $v_1$ and $v_n$ are mixed on $\{x,y\}$, they belong to $S$. 
By the definition of a homogeneous set, there exists $z\in V(T)\setminus S$, but $z$ is mixed on $\{v_1,v_n\}$, a contradiction.
Therefore $S \cap B=\emptyset$. 

If $S \not \subseteq X_m$, then $T|S$ is not strongly connected since $S\cap X_m$ is complete to $S\setminus X_m$. So, it follows that $S\subseteq X_m$. 
Lastly, since $X_m$ is homogeneous and $S$ is maximal, $S=X_m$.
This completes the proof.
\end{proof}  

A tournament is  \emph{prime} if it does not have homogeneous sets. Observe that if a tournament $T$ has at least three vertices and is prime, then $T$ is strongly connected. 
Recall that  $U_n=\Delta(I^{(1)},I^{(2)},\ldots,I^{(2n+1)})$ where $I^{(i)}$ is a one-vertex tournament. It is easy to see that $U_n$ is prime.

\begin{PROP}\label{Uprime}
Let $T \in \mathcal{A}$ be a tournament with at least three vertices.  Then, $T$ is prime if and only if $T$ is isomorphic to $U_{n}$ for some integer $n\ge2$.
\end{PROP}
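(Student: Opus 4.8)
The plan is to prove the two directions separately. The ``if'' direction is the easy one: as already observed in the excerpt, $U_n$ is prime, so every tournament isomorphic to some $U_n$ with $n\ge 2$ is prime and has at least three vertices. For the ``only if'' direction, let $T\in\mathcal A$ be prime with $|V(T)|\ge 3$. Since a prime tournament on at least three vertices is strongly connected, Proposition~\ref{PROP:upartition} gives a $\D$-partition $(\{v_1\},X_1,\{v_2\},X_2,\ldots,X_{n-1},\{v_n\})$ of $T$. The whole point will be to show that every $X_j$ is a single vertex; once that is done, $T=\D(I^{(1)},I^{(2)},\ldots,I^{(2n-1)})=U_n$ by definition, and $n\ge 2$ because $|V(T)|\ge 3$.

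To show each $X_j$ is a singleton I would argue by induction (or by a minimal-counterexample argument) and lean on Proposition~\ref{Uhomo}. First I would handle the case $|V(T)|=3$ directly: the only strong tournament on three vertices is the cyclic triangle, which is $U_2$. Now suppose $|V(T)|\ge 4$ and $T$ is prime. If some $X_k$ has $|X_k|\ge 2$, then since $X_k$ is itself homogeneous in $T$ (every vertex of $T$ outside $X_k$ lies in some other block or is one of the $v_i$'s, and by the defining property of a $\D$-partition it is complete to or complete from all of $X_k$), $T$ would have a homogeneous set, contradicting primeness. The remaining possibility to rule out is $|X_k|=0$ for some $k$ — but an empty block is not allowed, or rather, if we permit empty blocks then merging eliminates them and we may assume all blocks are nonempty; alternatively, if $X_k=\emptyset$ then $v_k$ and $v_{k+1}$ would have the same relationship to every other vertex, making $\{v_k,v_{k+1}\}$ homogeneous when $|V(T)|\ge 3$. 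Either way we reach a contradiction, so every $X_k$ is a single vertex.

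The main obstacle, I expect, is being careful about the edge cases and the bookkeeping: checking that a block $X_k$ of size at least two really is homogeneous in the full tournament $T$ (this uses precisely the parity conditions in the definition of a $\D$-partition, ensuring no vertex $v_i$ is mixed on $X_k$), and handling the degenerate situations — blocks of size zero or one, and the smallest cases $|V(T)|=3$. It is worth noting that we do not even need the full strength of Proposition~\ref{Uhomo} here; the direct observation that each $X_k$ is homogeneous suffices. But if one prefers, Proposition~\ref{Uhomo} can be invoked to say that a maximal homogeneous set must equal some $X_k$, so primeness (no homogeneous set at all) forces every $X_k$ to have size at most one, which is cleaner. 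I would write the argument that way to reuse the already-established machinery, and then conclude $T\cong U_n$.
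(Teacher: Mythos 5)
Your argument is correct and matches the paper's proof: the ``if'' direction is immediate since $U_n$ is prime, and for the ``only if'' direction the paper likewise takes the $\D$-partition from Proposition~\ref{PROP:upartition} and observes that any block $X_i$ with $|X_i|\ge 2$ would be a homogeneous set, contradicting primeness, so all blocks are singletons and $T\cong U_n$. Your extra care about empty blocks and the optional appeal to Proposition~\ref{Uhomo} are fine but not needed beyond what the paper already does.
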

\begin{proof}
The `if' part is clear.

For the `only if' part, 
if $T$ is prime, then $T$ is strongly connected, and by Proposition~\ref{PROP:upartition}, there exists a $\D$-partition $(\{v_1\},X_1,\{v_2\},\ldots,X_{n-1},\{v_{n}\})$ of $T$.

If  $|X_i| \ge2$ for some $1\le i\le n$, then  $X_i$ is homogeneous, so, $|X_i| = 1$ for every $i$.  Therefore, $T$ is isomorphic to $U_n$.
\end{proof}

\section{Proof of Theorem~\ref{THM_meet2}}\label{SEC:forest}

Let $\mathcal{F}$ be the set of all forest tournaments.
First, we show that $\mathcal{F}$ is hereditary.

\begin{PROP}\label{closed}
Let $T$ be a forest tournament with at least two vertices and forest ordering $\sigma$. Then, for every $v\in V(T)$,  $T\setminus v$ is a forest tournament and $\sigma\setminus v$ is its forest ordering.
\end{PROP}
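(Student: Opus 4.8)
The plan is to induct on $n=|V(T)|$. The base case $n=2$ is immediate: $T\setminus v$ is then a one-vertex tournament, hence a forest tournament, and $\sigma\setminus v$ is its (trivial) forest ordering. For $n\ge 3$, fix a forest ordering $\sigma=v_1,v_2,\ldots,v_n$ of $V(T)$ together with a forest cut $(A,B)$, where $A=\{v_1,\ldots,v_i\}$ and $B=\{v_{i+1},\ldots,v_n\}$ for some $1\le i\le n-1$; by definition $\sigma|A$ and $\sigma|B$ are forest orderings of $T|A$ and $T|B$, and no two edges of $B_{\sigma}(T)$ joining $A$ to $B$ lie in a common component of $B_{\sigma}(T)$. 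Given $v\in V(T)$, by the symmetry of the forest-cut condition in the two parts we may assume $v\in A$.

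The first step is a small but crucial observation about backedge graphs: deleting a vertex commutes with forming the backedge graph, i.e. $B_{\sigma\setminus v}(T\setminus v)=B_{\sigma}(T)\setminus v$. Indeed, for $x,y\ne v$ the edge of $T$ between $x$ and $y$ and the relative order of $x$ and $y$ in $\sigma$ are both unchanged when $v$ is deleted, so $xy$ is a backward edge in one setting exactly when it is in the other. Combined with the elementary fact that deleting a vertex from a graph never merges two of its components (each component of $G\setminus v$ is contained in a component of $G$), this lets us transfer the forest-cut condition downward.

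The main case is $|A|\ge 2$. Here the induction hypothesis applies to $T|A$ (which has $i$ vertices with $2\le i\le n-1$), so $T|A\setminus v$ is a forest tournament with forest ordering $(\sigma|A)\setminus v=(\sigma\setminus v)|(A\setminus v)$, while $T|B$ remains a forest tournament with forest ordering $(\sigma\setminus v)|B$. It then remains to check that $(A\setminus v,B)$ is a forest cut of $T\setminus v$ under $\sigma\setminus v$. By the observation above, the edges of $B_{\sigma\setminus v}(T\setminus v)=B_{\sigma}(T)\setminus v$ between $A\setminus v$ and $B$ form a subset of the edges of $B_{\sigma}(T)$ between $A$ and $B$; if two of them lay in a common component of $B_{\sigma}(T)\setminus v$, they would lie in a common component of $B_{\sigma}(T)$, contradicting the forest-cut property of $\sigma$. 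The remaining case $|A|=1$ is trivial: then $v=v_1$, $T\setminus v=T|B$, and $\sigma\setminus v=\sigma|B$ is a forest ordering of $T|B$ by hypothesis.

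I do not expect a genuine obstacle here; the only thing to get right is the bookkeeping showing that restriction of orderings, deletion in backedge graphs, and the component structure all interact as one hopes — in particular that passing from $T$ to $T\setminus v$ can only shrink the set of cross edges between the two sides of the cut and can only refine the partition into components, so the forest-cut condition is inherited essentially for free.
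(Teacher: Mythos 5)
Your proof is correct and follows essentially the same route as the paper: induction on $|V(T)|$, splitting on whether the deleted vertex lies in a singleton side of the forest cut, and inheriting the cut $(A\setminus v,B)$ for $T\setminus v$. In fact you spell out a detail the paper leaves implicit, namely that $B_{\sigma\setminus v}(T\setminus v)=B_{\sigma}(T)\setminus v$ and that vertex deletion only refines components, so the cross-edge condition of the forest cut is preserved.
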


\begin{proof}
We use induction on the number of vertices of $T$. Let $|V(T)|=n$ and let $\sigma'=\sigma\setminus v$ and $T'=T\setminus v$.
 
If $n=2$, we are done since $B_{\sigma'}(T')$ has no edge.

Let $n>2$ and assume that Proposition~\ref{closed} is true for every forest tournament with less than $n$ vertices. 
Let $(V_1,V_2)$ be a forest cut of $T$ under $\sigma$, so $T|V_i$ is a forest tournament with forest ordering $\sigma|V_i$ for $i=1,2$. 
Without loss of generality, let $v \in V_1$. Let $\sigma_1=\sigma|V_1$. 
If $V_1=\{v\}$, then $T'$ is $T|V_2$ which is a forest tournament with  forest ordering $\sigma'=\sigma|V_2$.  
Thus we may assume that $|V_1|>1$.  
Then, $T|V_1$ is a forest tournament with forest ordering $\sigma_1$, and   by the induction hypothesis, $T|(V_1\setminus v)$ is a forest tournament with forest ordering $\sigma_1\setminus v$.
Therefore, $(V_1\setminus v,V_2)$ is a forest cut of $T'$ under $\sigma'$, and so $T'$ is a forest tournament with forest ordering $\sigma'$. This completes the proof.
\end{proof}

Next, we prove that for a forest tournament $T$ and its forest ordering $\sigma$, $B_{\sigma}(T)$ does not contain a cycle as an induced subgraph. 
For an ordered graph $G$ with at least two vertices and vertex ordering $\sigma=v_1,\ldots,v_n$, the {\em thickness of $G$ (under $\sigma$)} is the minimum number of edges between $\{v_1,v_2,\ldots,v_i\}$ and $\{v_{i+1},\ldots,v_n\}$ over all $i$'s. 

\begin{PROP}\label{onecomponent}
Let $T$ be a forest tournament with forest ordering $\sigma$. If $B_{\sigma}(T)$ is connected, then the thickness of $B_{\sigma}(T)$ is one.
\end{PROP}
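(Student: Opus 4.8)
The plan is to bound the thickness of $B_\sigma(T)$ from below and from above by $1$ separately; we may assume $|V(T)| = n \ge 2$, since otherwise the thickness is not defined. Write $\sigma = v_1,\dots,v_n$ and, for each $1 \le i \le n-1$, let $E_i$ denote the set of edges of $B_\sigma(T)$ with one end in $\{v_1,\dots,v_i\}$ and the other in $\{v_{i+1},\dots,v_n\}$, so that the thickness of $B_\sigma(T)$ equals $\min_{1\le i\le n-1}|E_i|$.

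For the lower bound I would use the elementary fact that a connected graph on at least two vertices has an edge across every partition of its vertex set into two nonempty parts: if $E_i=\emptyset$ for some $i$, then no path of $B_\sigma(T)$ joins $\{v_1,\dots,v_i\}$ to $\{v_{i+1},\dots,v_n\}$, contradicting connectedness. Hence $|E_i|\ge 1$ for every $i$, and the thickness is at least $1$.

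For the upper bound I would invoke the definition of a forest ordering directly. Since $\sigma$ is a forest ordering of $T$ and $n\ge 2$, there is a forest cut $(V_1,V_2)=(\{v_1,\dots,v_i\},\{v_{i+1},\dots,v_n\})$ of $T$ under $\sigma$ with $1\le i\le n-1$, and by definition no two edges of $E_i$ lie in the same component of $B_\sigma(T)$. But $B_\sigma(T)$ is connected, so it has a single component; hence $E_i$ contains at most one edge. Combining this with the lower bound gives $|E_i|=1$, so the thickness of $B_\sigma(T)$ is exactly $1$.

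I do not expect a genuine obstacle here. The only point needing a word of justification is that, for $n\ge 2$, a forest cut genuinely partitions $V(T)$ into two \emph{nonempty} parts (so that $E_i$ is an honest cut), which follows from the recursive structure of the definition, since a cut with an empty side would make the recursion vacuous. Conceptually, the entire content is that the hypothesis that $B_\sigma(T)$ is connected collapses the ``pairwise-distinct-components'' clause in the definition of a forest cut into the statement that there is at most one crossing edge.
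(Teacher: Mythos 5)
Your proof is correct and follows essentially the same route as the paper: connectedness gives the lower bound of one, and the forest cut together with the single-component hypothesis gives the upper bound of one crossing edge at that cut. The extra remark about the cut being nontrivial is a harmless clarification the paper leaves implicit.
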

\begin{proof}
Since $B_{\sigma}(T)$ is connected, its thickness is at least one.
Let $(V_1,V_2)$ be a forest cut of $T$ under $\sigma$. Since $B_{\sigma}(T)$ has one component, there is exactly one edge between $V_1$ and $V_2$ in $B_{\sigma}(T)$. So, the thickness of $B_{\sigma}(T)$ is one.
\end{proof}

\begin{COR}\label{nocycle}
Let $T$ be a forest tournament and $\sigma$ its forest ordering. Then, $B_{\sigma}(T)$ does not contain a cycle as an induced subgraph.
\end{COR}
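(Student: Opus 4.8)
The plan is to prove Corollary~\ref{nocycle} by induction on $|V(T)|$, piggybacking on the structure provided by Proposition~\ref{onecomponent} and the hereditary property from Proposition~\ref{closed}. Suppose, for contradiction, that for some forest tournament $T$ with forest ordering $\sigma$, the backedge graph $B_\sigma(T)$ contains an induced cycle $C$. Let $W=V(C)$. First I would observe that we may assume $V(T)=W$: by Proposition~\ref{closed}, repeatedly deleting a vertex of $V(T)\setminus W$ leaves a forest tournament whose forest ordering is the restriction of $\sigma$, and deleting a vertex outside $W$ does not change the subgraph of $B_\sigma(T)$ induced on $W$ (a backward edge of $T|W$ under $\sigma|W$ is still a backward edge, and conversely), so $B_{\sigma|W}(T|W)$ still contains $C$ as an induced subgraph — in fact equals $C$. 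So it suffices to derive a contradiction from the assumption that $B_\sigma(T)$ \emph{is} a cycle.

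Now $B_\sigma(T)$ is a connected graph, so by Proposition~\ref{onecomponent} its thickness under $\sigma$ is one: there is an index $i$ with exactly one edge of $B_\sigma(T)$ between $\{v_1,\dots,v_i\}$ and $\{v_{i+1},\dots,v_n\}$. But a cycle $C_n$ (with $n\ge 3$) with \emph{any} vertex ordering always has at least two edges crossing every nontrivial cut: if the vertices are split into a nonempty initial segment $S$ and its nonempty complement, then walking around the cycle one must cross between $S$ and its complement an even, hence positive even, hence $\ge 2$, number of times. This contradicts thickness one. The only edge case is that a cycle has at least three vertices, so both sides of the cut $(\{v_1,\dots,v_i\},\{v_{i+1},\dots,v_n\})$ coming from a forest cut are nonempty (a forest cut of a tournament on $\ge 2$ vertices has both parts nonempty by definition), and since $n\ge 3$ such a cut is genuinely nontrivial.

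I expect the main (though mild) obstacle to be bookkeeping the reduction to $V(T)=W$ cleanly — specifically, confirming that deleting vertices outside an induced-cycle vertex set $W$ does not create or destroy edges of the backedge graph within $W$, and that Proposition~\ref{closed} indeed lets us pass to a forest ordering of the restriction. Once that reduction is in place, the parity argument for cuts of a cycle combined with Proposition~\ref{onecomponent} closes the proof immediately; there is essentially no computation. A slight alternative is to avoid the reduction by noting directly that if $C$ is an induced cycle in $B_\sigma(T)$, then restricting to $W=V(C)$ via Proposition~\ref{closed} gives a forest tournament $T|W$ with forest ordering $\sigma|W$ and $B_{\sigma|W}(T|W)=C$, so that Proposition~\ref{onecomponent} applies to $T|W$ directly; I would phrase the proof this way to keep it short.
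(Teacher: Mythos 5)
Your proposal is correct and follows essentially the same route as the paper's proof: restrict to the vertex set of the induced cycle using Proposition~\ref{closed} (noting that restricting $\sigma$ preserves the backedge graph on that set), then apply Proposition~\ref{onecomponent} and the fact that every nontrivial prefix cut of a cycle is crossed by at least two edges to get a contradiction. The extra care you take with the reduction step is fine but not a substantive difference.
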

\begin{proof}
Suppose there exists $V' \subseteq V(T)$ such that $B_{\sigma}(T)|V'$ is a cycle. Let $T'=T|V'$ and $\sigma' =\sigma|V'$. 
Then, $T'$ is a forest tournament and $\sigma'$ is its forest ordering by Proposition~\ref{closed}.
Since a cycle is connected, Proposition~\ref{onecomponent} implies that the thickness of $B_{\sigma'}(T')$ is one. However, for every partition $(V_1,V_2)$ of $V(B_{\sigma'}(T'))$, there exist at least two edges between $V_1,V_2$ since  $B_{\sigma'}(T')$ is a cycle, a contradiction. This completes the proof.
\end{proof}

A forest (undirected graph) has chromatic number at most two. It holds for a forest tournament as well. 

\begin{PROP}\label{twocolor}
Every forest tournament has chromatic number at most two.
\end{PROP}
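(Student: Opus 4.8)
The plan is to induct on the number of vertices of a forest tournament $T$, using the forest cut to split the problem and then recombine the colorings. The base case $|V(T)|=1$ (or $2$) is trivial since a transitive tournament on at most two vertices is properly colored with one color. For the inductive step, let $(V_1,V_2)$ be a forest cut of $T$ under a forest ordering $\sigma$, so that $T|V_1$ and $T|V_2$ are forest tournaments (with forest orderings $\sigma|V_1$ and $\sigma|V_2$), and moreover no two edges of $B_\sigma(T)$ between $V_1$ and $V_2$ lie in the same component of $B_\sigma(T)$. By the induction hypothesis, pick $2$-colorings $\phi_1\colon V_1\to\{1,2\}$ and $\phi_2\colon V_2\to\{1,2\}$ of $T|V_1$ and $T|V_2$.

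The key observation is that a color class of $T$ is transitive if and only if the corresponding vertex set induces no backward edge under $\sigma$ — equivalently, induces no edge of $B_\sigma(T)$. (Here I am using that for any ordering $\sigma$ of a tournament, the vertices within a set that induces no backward edge are linearly ordered by $\sigma$ consistently with all their arcs, hence transitive.) So it suffices to produce a $2$-coloring of the ordered graph $B_\sigma(T)$, i.e., a proper $2$-coloring of $B_\sigma(T)$. Now $\phi_i$ being a valid tournament-coloring of $T|V_i$ means exactly that $\phi_i$ is a proper $2$-coloring of $B_\sigma(T)|V_i = B_{\sigma|V_i}(T|V_i)$. The edges of $B_\sigma(T)$ between $V_1$ and $V_2$ form a graph in which, by the forest-cut condition, each connected component of $B_\sigma(T)$ contributes at most one such crossing edge; in particular these crossing edges form a matching, and no crossing edge is "short-circuited" by a path inside $V_1\cup V_2$ lying in the same $B_\sigma(T)$-component. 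Hence I can, component by component of $B_\sigma(T)$, possibly swap the two colors on the $V_2$-side restricted to that component so that the (at most one) crossing edge of that component becomes bichromatic, without disturbing propriety inside $V_1$ or inside $V_2$. This yields a proper $2$-coloring of $B_\sigma(T)$, hence a $2$-coloring of $T$, completing the induction.

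I expect the main subtlety to be the bookkeeping in this recoloring step: one must argue that swapping colors on $V_2$ within a single $B_\sigma(T)$-component is well-defined and independent across components, which is precisely where the forest-cut hypothesis (at most one crossing edge per component) is used, and where Corollary~\ref{nocycle} / Proposition~\ref{onecomponent} can be invoked to control the structure of $B_\sigma(T)$. An alternative, possibly cleaner, route is to first prove that $B_\sigma(T)$ is itself a forest — this is stated in the excerpt as a property the authors establish — and then simply invoke that a forest is $2$-colorable together with the equivalence "transitive color class $\iff$ independent set in $B_\sigma(T)$"; this would sidestep the recoloring argument entirely and is the approach I would actually write up if the forest-structure lemma is available at this point.
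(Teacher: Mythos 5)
Your ``alternative route'' at the end is precisely the paper's proof: Corollary~\ref{nocycle} is already available at this point, so $B_{\sigma}(T)$ is a forest and hence properly $2$-colorable as a graph, and a set that is independent in $B_{\sigma}(T)$ induces no backward edge under $\sigma$ and is therefore transitive; this gives $\chi(T)\le 2$ immediately. If you write up that version, you coincide with the paper.

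Your primary inductive argument, as written, has one genuine flaw. The claimed equivalence ``a color class is transitive if and only if it induces no backward edge under $\sigma$'' is false in the \emph{only if} direction: a transitive set may contain backward edges (take two vertices $u,v$ with $v$ adjacent to $u$ and $u<_{\sigma}v$; then $\{u,v\}$ is transitive but $uv\in E(B_{\sigma}(T))$). Only the \emph{if} direction holds, and that is all you need for ``it suffices to properly $2$-color $B_{\sigma}(T)$''. However, you invoke the false direction when you assert that the tournament $2$-colorings $\phi_1,\phi_2$ supplied by the induction hypothesis are ``exactly'' proper $2$-colorings of $B_{\sigma}(T)|V_1$ and $B_{\sigma}(T)|V_2$: the hypothesis ``chromatic number at most two'' only gives transitive color classes, which need not be independent in the backedge graphs. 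The repair is to run the induction on the stronger statement that for every forest tournament $T$ with forest ordering $\sigma$, the graph $B_{\sigma}(T)$ admits a proper $2$-coloring. With that, your recoloring step is sound: by the forest-cut condition each component of $B_{\sigma}(T)$ contains at most one crossing edge, such an edge is a bridge of its component, so its $V_2$-side is a full component of $B_{\sigma|V_2}(T|V_2)$, and swapping $\phi_2$ there neither breaks propriety inside $V_2$ nor interferes with other components. Note that this patched induction is in effect a self-contained proof that $B_{\sigma}(T)$ is $2$-colorable, i.e.\ it re-derives what Corollary~\ref{nocycle} already gives; citing that corollary directly, as the paper does and as you suggest, is the shorter path.
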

\begin{proof}
Let $T$ be a forest tournament with forest ordering $\sigma$. 
By Corollary~\ref{nocycle}, $B_{\sigma}(T)$ is a forest, 
in particular, it is 2-colorable (as a graph coloring). 
So, $V(T)$ can be partitioned into two sets $(X,Y)$ such that no pair of adjacent vertices in the same set. 
Then, there is no backward edge in $T|X$ (resp. $T|Y$) under $\sigma|X$ (resp. $\sigma|Y$), which implies that $X$ and $Y$ are transitive sets in $T$. So, $\chi(T)\le 2$. 
\end{proof}

The remaining of this section is devoted to proving Theorem~\ref{THM_meet2}. 
We start with some definitions. 

For a tournament $T$  and an injective map $\phi :V(T)\to \mathbb{Z}^+$, let $\sigma_{\phi}$ be the ordering $v_1,v_2,\ldots,v_n$ of $V(T)$ such that $\phi(v_i) < \phi(v_j)$ for every $i<j$. For  a backward edge $e$ of $T$ under $\sigma_{\phi}$,  if its end vertices are $x$ and $y$, we define $\phi(e)=|\phi(x)-\phi(y)|$. For integers $r,s \ge 1$ and distinct $e,f \in E(B_{\sigma_{\phi}}(T))$ we say $e$ and $f$ are {\em $(r,s)$-comparable (under $\phi$)} if 
\begin{itemize}
\item there exists a path in $B_{\sigma_{\phi}}(T)$ with at most $s$ edges containing $e$ and $f$, and
\item $ \frac{1}{r} \le \frac{\phi(e)}{\phi(f)} \le r$.
\end{itemize}

For positive integers $r$ and $s$, we denote by $\mathcal{C}_{(r,s)}$ the class of tournaments $T$ such that there exists an injective map $\phi$ from $V(T)$ to $\mathbb{Z}$ such that no two edges of $B_{\sigma_{\phi}} (T)$ are $(r,s)$-comparable.
It is easy to see that $\mathcal{C}_{(r,s)}$ is hereditary.
The following is proved in~\cite{hero}.

\begin{LEM}[Berger et al.~\cite{hero}]\label{crs}
For integers $r,s \ge 1$, the chromatic number of $\mathcal{C}_{(r,s)}$ is unbounded. So, every heroic set meets $\mathcal{C}_{(r,s)}$. 
\end{LEM}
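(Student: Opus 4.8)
The plan is to peel off the easy half first: the clause ``every heroic set meets $\mathcal{C}_{(r,s)}$'' is a one-line consequence of the first clause together with the (already noted) fact that $\mathcal{C}_{(r,s)}$ is hereditary. Indeed, if $\mathcal{H}$ is heroic with bound $c$ and $T\in\mathcal{C}_{(r,s)}$ has $\chi(T)>c$, then $T$ is not $\mathcal{H}$-free, so $T$ contains some $H\in\mathcal{H}$, and since $\mathcal{C}_{(r,s)}$ is hereditary, $H\in\mathcal{H}\cap\mathcal{C}_{(r,s)}$. So the whole task reduces to: for every integer $c$ produce a tournament $T_c$ with $\chi(T_c)\ge c$ and an injection $\phi_c:V(T_c)\to\mathbb{Z}$ such that no two edges of $B_{\sigma_{\phi_c}}(T_c)$ are $(r,s)$-comparable.

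The governing constraint, and the thing to keep in mind throughout, is that \emph{short cycles in the backedge graph are fatal}: if $B_{\sigma_\phi}(T)$ contains a cycle of length $\ell$ all of whose edges lie on a common path of at most $s$ edges (this happens whenever $\ell\lesssim 2s$), then the $\phi$-lengths $a_1,\dots,a_\ell$ of those edges cannot be made pairwise separated by the factor $r$: walking around the cycle in $\mathbb{Z}$ the signed lengths sum to $0$, so the largest length is at most the sum of the others, hence within a factor $\ell-1$ of the next largest. Consequently $\sigma_{\phi_c}$ must be chosen so that $B_{\sigma_{\phi_c}}(T_c)$ has girth larger than roughly $2s+2$; and since $\chi(T_c)\le\chi(B_{\sigma_{\phi_c}}(T_c))$ as an (ordinary graph) chromatic number, the backedge graph must also have large chromatic number while being high-girth. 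Existence of such graphs is classical (Erd\H{o}s), but we need the additional geometric feature that $B_{\sigma_{\phi_c}}(T_c)$ admit an injection into $\mathbb{Z}$ realizing its vertex order and with geometrically spread edge-lengths for nearby edges. I would obtain this by building $T_c$ and $\phi_c$ together, recursively in $c$. The mechanism for placing $\phi_c$: work with $c$ ``layers'', where the backward edges created when passing from level $t-1$ to level $t$ are all assigned $\phi_c$-lengths in a narrow multiplicative window around a fixed scale $N^{t}$ with $N>r$, the windows pairwise far apart; then any two backward edges from different layers have length-ratio at least $N>r$ and so are non-comparable regardless of how $B$ connects them, while within a single layer the (few, locally tree-like) edges are placed at distinct powers of some $\rho>r$ inside the window, so they too are pairwise non-comparable. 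Working over $\mathbb{Q}$ and clearing denominators at the end lands us in $\mathbb{Z}$; all the conditions are ratio conditions, hence scale-invariant.

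The chromatic increase, from $\chi\ge c$ to $\chi\ge c+1$, is the heart of the matter, and it has to be done carefully. The natural template is the pigeonhole argument used to show $\chi(D_n)=n$ and $\chi(A_n)=n$ (Propositions~\ref{LEM:Dn} and~\ref{LEM:An}): one wants that in any $c$-colouring of $T_{c+1}$, some pair of equal-coloured vertices, together with a third equal-coloured vertex forced by a counting/pigeonhole step, induces a monochromatic cyclic triangle. The obstacle is that the device actually used for $D_n$ — adjoin a single apex complete to one recursive copy and complete from another — makes that apex adjacent in the backedge graph to an entire recursive copy, which already carries backedges, creating triangles and other short cycles: exactly what must be avoided. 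So the level-$t$ gadget must be a different, ``gentle'' $\chi$-raising device: chromatically potent enough for the pigeonhole argument, but whose backedge contribution is a high-girth, locally forest-like, line-embeddable graph living at scale $N^{t}$, creating no cycle of length $\le 2s+2$ with itself or (using scale separation) with lower layers. Designing this gadget and verifying the colouring bound is the genuinely technical step, and is the main obstacle I anticipate; I would build it from long ``blown-up odd-cycle'' / shift-graph-type fragments whose local neighbourhoods are caterpillar-like (hence admit the geometric embedding) but whose global structure carries the chromatic weight, and check the increase by a pigeonhole along the fragment as in Proposition~\ref{LEM:Dn}. If a self-contained construction proves too long for the present purposes, the explicit construction of~\cite{hero} can simply be quoted; all that is needed downstream is the existence of a hereditary, $\chi$-unbounded subclass of each $\mathcal{C}_{(r,s)}$, together with the trivial deduction in the first paragraph.
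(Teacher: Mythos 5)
The lemma you are asked about is not proved in this paper at all: it is stated as a result of Berger et al.\ and the text simply says it ``is proved in~\cite{hero}''. So the only part of your proposal that can be checked against the paper is the reduction of the second sentence to the first, which you do correctly (it is exactly Proposition~\ref{PRO:hereditary} applied to the hereditary class $\mathcal{C}_{(r,s)}$), and your closing fallback ``quote the construction of~\cite{hero}'', which coincides with what the paper itself does.

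Your attempted self-contained proof of the first sentence, however, has a genuine gap, and you name it yourself: the entire mathematical content of the lemma is the existence, for every $c$, of a tournament $T_c$ with $\chi(T_c)\ge c$ admitting an ordering/embedding $\phi_c$ into $\mathbb{Z}$ with no two $(r,s)$-comparable backward edges, and your sketch never produces it. The surrounding analysis is sound — the observation $\chi(T)\le\chi(B_{\sigma}(T))$ is correct (it is the argument of Proposition~\ref{twocolor}), so the backedge graph under the chosen ordering is indeed forced to have large chromatic number, and your cycle computation correctly shows that short cycles in $B_{\sigma_\phi}(T)$ obstruct incomparability — but these are necessary conditions, not a construction. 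The multi-scale ``layers at scale $N^t$'' scheme only disposes of comparability \emph{between} layers; the crucial object, a chromatic-number-raising gadget whose own backedge contribution is high-girth, line-embeddable with geometrically spread lengths, and still supports a pigeonhole colouring lower bound in the style of Propositions~\ref{LEM:Dn} and~\ref{LEM:An}, is only gestured at (``blown-up odd-cycle / shift-graph-type fragments'') with no definition and no verification of either the colouring bound or the incomparability within a layer. As you note, the naive $D_n$-style apex fails precisely because it creates short cycles in the backedge graph, so some genuinely new device is needed — and that device is the substance of the proof in~\cite{hero}. As a blind proof the proposal is therefore incomplete; as a treatment matching the paper, citing~\cite{hero} (as you offer in the last sentence) is both legitimate and exactly what the authors do, since all that is used downstream is a hereditary, $\chi$-unbounded subclass of each $\mathcal{C}_{(r,s)}$.
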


Lemma~\ref{crs} provides infinitely many hereditary classes $\mathcal{C}(r,s)$ of tournaments meeting every heroic set.
This implies that if $\mathcal{H}$ is a finite heroic set, then $\mathcal{H}$ contains a tournament $H$ belonging to $\mathcal{C}(r,s)$ for infinitely many pairs $(r,s)$. 
Observe that for $e,f \in E(T)$, 
if $e$ and $f$ are not $(r,s)$-comparable under $\phi$, 
then they are not $(r',s')$-comparable under $\phi$ for every positive integers $r' (\le r)$ and $s' (\le s)$. 
So, it follows that $\mathcal{C}_{(r,s)} \subseteq \mathcal{C}_{(r',s')}$, and 
it directly leads to the following lemma.
Let  $\mathcal{C}=\bigcap_{r,s \in \mathbb{Z}^+} \mathcal{C}_{(r,s)}$.

\begin{LEM}\label{comparable}
If $\mathcal{H}$ is a finite heroic set, then it contains $H$ such that $H \in \mathcal{C}_{(r,s)}$ for every positive integers $r$ and $s$. That is, 
$\mathcal{H}\cap \mathcal{C} \neq \emptyset$.
\end{LEM}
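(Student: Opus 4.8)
The plan is to deduce Lemma~\ref{comparable} from Lemma~\ref{crs} together with the two structural facts already isolated in the paper: that each $\mathcal{C}_{(r,s)}$ is hereditary and meets every heroic set, and that the family $\{\mathcal{C}_{(r,s)}\}_{r,s}$ is nested downward, i.e.\ $\mathcal{C}_{(r,s)}\subseteq\mathcal{C}_{(r',s')}$ whenever $r'\le r$ and $s'\le s$. The statement to prove has two parts, and the second ($\mathcal{H}\cap\mathcal{C}\neq\emptyset$) is just a restatement of the first once we recall $\mathcal{C}=\bigcap_{r,s}\mathcal{C}_{(r,s)}$, so the real content is producing a single $H\in\mathcal{H}$ that lies in $\mathcal{C}_{(r,s)}$ for \emph{all} positive integers $r,s$ simultaneously.

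First I would reduce the double-indexed family to a single chain. For each $n\ge 1$ set $\mathcal{C}_n := \mathcal{C}_{(n,n)}$. By the nestedness observation, $\mathcal{C}_1\supseteq\mathcal{C}_2\supseteq\mathcal{C}_3\supseteq\cdots$, and moreover $\bigcap_n \mathcal{C}_n = \bigcap_{r,s}\mathcal{C}_{(r,s)} = \mathcal{C}$, because any pair $(r,s)$ is dominated by $(\max(r,s),\max(r,s))$ and hence $\mathcal{C}_{(\max(r,s),\max(r,s))}\subseteq\mathcal{C}_{(r,s)}$. Each $\mathcal{C}_n$ is hereditary (it is an instance of $\mathcal{C}_{(r,s)}$) and, by Lemma~\ref{crs}, has unbounded chromatic number; so by Proposition~\ref{PRO:hereditary} (or directly by the ``meets every heroic set'' clause of Lemma~\ref{crs}) the finite heroic set $\mathcal{H}$ contains a member of $\mathcal{C}_n$, for every $n$.

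Now I would run a pigeonhole argument across the chain. For each $n$ pick $H_n\in\mathcal{H}\cap\mathcal{C}_n$. Since $\mathcal{H}$ is finite and the index $n$ ranges over the infinite set $\mathbb{Z}^+$, some fixed tournament $H\in\mathcal{H}$ equals $H_n$ for infinitely many $n$; let $N\subseteq\mathbb{Z}^+$ be that infinite set of indices. Then for every positive integer $m$ there is some $n\in N$ with $n\ge m$, and since $H=H_n\in\mathcal{C}_n\subseteq\mathcal{C}_m$, we get $H\in\mathcal{C}_m$. As $m$ was arbitrary, $H\in\bigcap_m\mathcal{C}_m=\mathcal{C}$, and in particular $H\in\mathcal{C}_{(r,s)}$ for all $r,s$ by the first reduction. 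This gives both assertions of the lemma and witnesses $\mathcal{H}\cap\mathcal{C}\neq\emptyset$.

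There is essentially no analytic obstacle here; the one point that needs a little care — and the closest thing to a ``hard step'' — is making the two downward-monotonicity facts precise and in the right direction: that decreasing the parameters \emph{enlarges} the class (so the chain $\mathcal{C}_n$ is nested the way claimed) and that the diagonal subfamily $\{\mathcal{C}_{(n,n)}\}$ is cofinal among all $\{\mathcal{C}_{(r,s)}\}$, so that intersecting over the diagonal already yields $\mathcal{C}$. Both follow immediately from the observation quoted just before Lemma~\ref{comparable} in the excerpt, so once that is invoked the remainder is a one-line pigeonhole on the finiteness of $\mathcal{H}$.
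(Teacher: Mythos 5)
Your argument is correct and is essentially the paper's own reasoning: the text just before the lemma notes that each $\mathcal{C}_{(r,s)}$ meets every heroic set, that $\mathcal{C}_{(r,s)}\subseteq\mathcal{C}_{(r',s')}$ for $r'\le r$, $s'\le s$, and concludes by pigeonhole over the finite set $\mathcal{H}$, exactly as you do. Your reduction to the diagonal classes $\mathcal{C}_{(n,n)}$ is only a cosmetic repackaging of that same monotonicity-plus-pigeonhole argument.
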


For a tournament $T$ and a positive integer $r$, we say an injective map $\phi:V(T)\to \mathbb{Z}^+$ is \emph{$r$-incomparable}, if
for every pair $(e,f)$ of edges of $B_{\sigma_{\phi}}(T)$ in the same component, $\frac{\phi(e)}{\phi(f)}$ is either greater than $r$ or less than $\frac{1}{r}$.
We note that for $r\ge r'$, if $\phi$ is $r$-incomparable, then it is $r'$-incomparable.
We say a vertex ordering $\sigma$ of $T$ is \emph{incomparable} if for every positive integer $r$, there exists an $r$-incomparable injective  map $\phi:V(T)\to \mathbb{Z}^+$  such that $\sigma=\sigma_{\phi}$.

\begin{LEM}\label{stableordering}
Let $T$ be a tournament.
Then, $T$ belongs to $\mathcal{C}$ if and only if there exists an incomparable vertex ordering of $T$.
\end{LEM}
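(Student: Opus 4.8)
The plan is to prove both directions directly from the definitions, with the only real content being a diagonalization/compactness argument for the ``only if'' direction. For the ``if'' direction, suppose $T$ has an incomparable vertex ordering $\sigma$. I would fix arbitrary $r,s\in\mathbb{Z}^+$ and show $T\in\mathcal{C}_{(r,s)}$. By definition of incomparable, for the value $r$ there is an $r$-incomparable injective map $\phi:V(T)\to\mathbb{Z}^+$ with $\sigma_\phi=\sigma$. Now take any two distinct edges $e,f$ of $B_{\sigma_\phi}(T)$; if they lie on a common path with at most $s$ edges then in particular they lie in the same component of $B_{\sigma_\phi}(T)$, so by $r$-incomparability $\phi(e)/\phi(f)$ is $>r$ or $<1/r$, and hence $e$ and $f$ are not $(r,s)$-comparable. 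Thus no two edges of $B_{\sigma_\phi}(T)$ are $(r,s)$-comparable, so $T\in\mathcal{C}_{(r,s)}$; as $r,s$ were arbitrary, $T\in\mathcal{C}$.

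For the ``only if'' direction, suppose $T\in\mathcal{C}$, i.e.\ for every $r,s$ there is an injective $\phi_{r,s}:V(T)\to\mathbb{Z}$ witnessing $T\in\mathcal{C}_{(r,s)}$. I want to produce a single ordering $\sigma$ that is incomparable, i.e.\ works for all $r$ simultaneously (the parameter $s$ will be harmless, as I explain below). First I would reduce the two-parameter family to a one-parameter one: take $s=|E(B_{\sigma}(T))|$, or more simply $s=|V(T)|$, which is an absolute upper bound on the number of edges of any path in the backedge graph. Concretely, note that for $s'\ge s_0:=|V(T)|$ we have $\mathcal{C}_{(r,s')}=\mathcal{C}_{(r,s_0)}$, since any two edges of $B_{\sigma_\phi}(T)$ that lie on a common path at all already lie on one with at most $s_0$ edges; and by the remark preceding Lemma~\ref{comparable}, $\mathcal{C}_{(r,s_0)}\subseteq\mathcal{C}_{(r',s_0)}$ for $r'\le r$. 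Hence $T\in\mathcal{C}$ is equivalent to: for every $r$ there is an injective $\psi_r:V(T)\to\mathbb{Z}$ with $\sigma_{\psi_r}=:\sigma_r$ and such that no two edges of $B_{\sigma_r}(T)$ lying in a common path of length $\le s_0$ are $(r,s_0)$-comparable. The subtlety is that ``$(r,s_0)$-comparable'' only restricts pairs joined by a short path, whereas ``$r$-incomparable'' restricts all pairs in the same component; but since a component of $B_{\sigma_r}(T)$ has at most $s_0$ vertices, any two edges in the same component lie on a path with at most $s_0$ edges, so the two notions coincide here. Therefore $\psi_r$ is in fact $r$-incomparable in the sense of Lemma~\ref{stableordering}'s target definition, except possibly for the direction convention; I would then replace $\mathbb{Z}$ by $\mathbb{Z}^+$ by shifting.

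The last step is to extract one ordering $\sigma$ from the sequence $(\sigma_r)_{r\in\mathbb{Z}^+}$. Since $V(T)$ is finite, there are only finitely many orderings of $V(T)$, so by pigeonhole some fixed ordering $\sigma$ equals $\sigma_r$ for infinitely many $r$; call this infinite set $R$. Then for every $r$, pick some $r'\in R$ with $r'\ge r$: the map $\psi_{r'}$ has $\sigma_{\psi_{r'}}=\sigma_{r'}=\sigma$ and is $r'$-incomparable, hence $r$-incomparable (using the monotonicity remark in the paragraph before Lemma~\ref{stableordering}: $r$-incomparable implies $r'$-incomparable for $r\ge r'$, read in the needed direction). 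This exhibits, for each $r$, an $r$-incomparable injective map inducing $\sigma$, so $\sigma$ is incomparable and we are done.

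\textbf{Main obstacle.} I expect the only delicate point to be the bookkeeping that lets one drop the second parameter $s$: one must check carefully that ``no two edges of $B_{\sigma_\phi}(T)$ on a common path of length $\le s_0$ are $(r,s_0)$-comparable'' really is equivalent to ``$\phi$ is $r$-incomparable'', which hinges on the finiteness bound $|V(T)|\le s_0$ forcing every same-component pair to be joined by a short enough path. The pigeonhole extraction of a single ordering is routine but is the conceptual heart, and it is essential that $V(T)$ is finite so that only finitely many candidate orderings arise.
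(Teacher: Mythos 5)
Your proof is correct and follows essentially the same route as the paper: use the $\mathcal{C}_{(r,s)}$ instance with $s$ equal to a bound on path length in the backedge graph (the paper takes $s=n-1$) to upgrade $(r,s)$-incomparability to $r$-incomparability, then pigeonhole over the finitely many orderings of $V(T)$ and use monotonicity in $r$. The extra care you take with the $\mathbb{Z}$ versus $\mathbb{Z}^+$ shift and with collapsing the parameter $s$ only makes explicit what the paper leaves implicit.
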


\begin{proof}
The `if' part is clear by the definitions of $\mathcal{C}$ and an incomparable vertex ordering.

For the `only if' part, let $|V(T)|=n$.
For each integer $r\ge 1$, let $\phi_r$ be an injective map from $V(T)$ to $\mathbb{Z}^+$ with the property that 
no two edges of $B_{\sigma_{\phi_r}} (T)$ are $(r,n-1)$-comparable. Such $\phi_r$ exists by the definition of $\mathcal{C}$.
Since for every pair $(e,f)$ of edges in the same component of $B_{\sigma_{\phi_r}}(T)$, there exists a path $P$ with at most $n-1$ edges, with $e,f \in E(P)$,
it follows that  $\frac{\phi_r(e)}{\phi_r(f)}$ is either greater than $r$ or less than $\frac{1}{r}$.
So, $\phi_r$ is $r$-incomparable.

Since there are finitely many orderings of $V(T)$, there exists an ordering $\sigma$ of $V(T)$ which is equal to $\sigma_{\phi_r}$ for infinitely many positive integers $r$. 

We claim $\sigma$ is incomparable.
For every integer $r'\ge 1$, there exists $r\ge r'$ such that $\sigma=\sigma_{\phi_r}$. 
Since $\phi_r$ is $r'$-incomparable, $\sigma$ is incomparable.
\end{proof}

We are ready to prove Theorem~\ref{THM_meet2}. 

\begin{proof}[Proof of Theorem~\ref{THM_meet2}]
We will show that for a tournament $T$ and a vertex ordering $\sigma=v_1,v_2,\ldots,v_n$ of $T$, $\sigma$ is incomparable if and only if $\sigma$ is a forest ordering. Then, by Lemma~\ref{comparable}, Theorem~\ref{THM_meet2} is straightforward.

We use induction on $n$. It is clear when  $n=1$. Let $n>1$ and assume that the statement holds for every tournament with less than $n$ vertices.

Let $\sigma$ be incomparable. Let $\phi: V(T)\to \mathbb{Z}^+$ be a map such that
\begin{itemize}
\item[(1)] $\sigma=\sigma_{\phi}$, and
\item[(2)] for every $e,f \in B_{\sigma_{\phi}}(T)$ in the same component, $\frac{\phi(e)}{\phi(f)}$ is either greater than $n-1$ or less than $\frac{1}{n-1}$.
\end{itemize}
Take $v_i$ such that  $|\phi(v_i)-\phi(v_{i+1})|$ is maximized. Let $V_1=\{v_1,\ldots,v_i\}$ and $V_2=\{v_{i+1},\ldots,v_n\}$.

We claim that $(V_1,V_2)$ is a forest cut of $T$ under $\sigma$.
Suppose there exist two edges $e$ and $f$ between $V_1$ and $V_2$  contained in the same component of $B_{\sigma}(T)$.  
Without loss of generality, we may assume that $\frac{\phi(e)}{\phi(f)}>n-1$, so $\phi(e)>(n-1)\phi(f)$.
Then, it follows by (2) that 
$$
\phi(v_n)-\phi(v_1) \ge \phi(e) > (n-1) \phi(f) 
\ge (n-1) \left(\phi(v_{i+1})-\phi(v_i)\right).
$$
Since $\phi(v_{i+1})-\phi(v_i) \ge \phi(v_{j+1})-\phi(v_j)$ for every $j=1,2,\ldots,n-1$, it follows that $(n-1) \left(\phi(v_{i+1})-\phi(v_i)\right)
\ge \phi(v_n)-\phi(v_1)$, which yields a contradiction.
  Therefore, no two edges between $V_1$ and $V_2$ are in the same component of $B_{\sigma}(T)$.
  Moreover, for $i=1,2$, $\sigma|V_i$ is an incomparable vertex ordering of  $T|V_i$, so $\sigma|V_i$ is a forest ordering of $T|V_i$ by the induction hypothesis.  Hence, $\sigma$ is a forest ordering of $T$ with forest cut $(V_1,V_2)$.

Conversely, suppose $\sigma$ is a forest ordering of $T$ with a forest cut $(V_1,V_2)$ where $V_1=\{v_1,v_2,\ldots,v_i\}$ and $V_2=\{v_{i+1},\ldots,v_n\}$.
Let $T_1=T|V_1$ and $T_2=T|V_2$. By the induction hypothesis, $\sigma|V_i$ is an incomparable vertex ordering of $T|V_i$ for $i=1,2$. 

It is enough to show that for each $r \ge 1$, there exists an injective map $\phi:V(T)\to \mathbb{Z}^+$ satisfying the conditions (1) and (2) above.

For $i=1,2$, let $\phi_i$ be an $r$-incomparable map from $V_i$ to $\mathbb{Z}^+$  with $\sigma_{\phi_i}=\sigma|V_i$.
Let $\phi_1(v_i)-\phi_1(v_1)=a$ and $\phi_2(v_n)-\phi_2(v_{i+1})=b$. We define an injective map $\phi:V(T)\to \mathbb{Z}^+$ as follows:
$\phi(v_j)=\phi_1(v_j)$ for $1\le j \le i$ and $\phi(v_j)=\phi_1(v_i)+ab(r+1)^2+a(r+1)\phi_2(v_j)$ for $i+1\le j \le n$. 
Then, obviously,  $\sigma=\sigma_{\phi}$.

We claim that $\phi$ is $r$-incomparable.
Let $e$ and $f$ be edges of $B_{\sigma}(T)$ in the same component.
If either $e,f \in E(B_{\sigma_{\phi}|V_1}(T_1))$ or $e,f \in E(B_{\sigma_{\phi}|V_2}(T_2))$, we are done. 
If one is an edge of  $B_{\sigma_{\phi}|V_1}(T_1)$ and the other is an edge of $B_{\sigma_{\phi}|V_2}(T_2)$, say $e \in E(B_{\sigma_{\phi}|V_1}(T_1))$ and $f\in E(B_{\sigma_{\phi}|V_2}(T_2))$, then $\frac{\phi(f)}{\phi(e)} \ge r+1>r$ since $\phi(e)\le \phi(v_i)-\phi(v_1)=a$ and  $\phi(f) \ge a(r+1)$. 
So, we may assume that either $e$ or $f$ is an edge between $V_1$ and $V_2$, say $f$. Then, $e$ is contained in $B_{\sigma_{\phi}|V_1}(T_1)$ or $B_{\sigma_{\phi}|V_2}(T_2)$ since $e$ and $f$ are contained in the same component of $B_{\sigma_{\phi}}(T)$. 
Note that $\phi(f) \ge \phi(v_{i+1})-\phi(v_i) = ab(r+1)^2+a(r+1)\phi_2(v_{i+1})>ab(r+1)^2$, and $\phi(e)\le \max\{\phi_1(v_i)-\phi_1(v_1),a(r+1)\left(\phi_2(v_n)-\phi_2(v_{i+1})\right)\} \le ab(r+1)$. 
So, $\frac{\phi(f)}{\phi(e)} \ge r+1>r$. Therefore, $\phi$ is  $r$-incomparable.  
This completes the proof.
\end{proof}


\section{Minimal non-heroes}\label{SEC:nonhero}
Since every subtournament of a hero is a hero, it is interesting to characterize minimal non-heroes. In~\cite{hero}, the authors showed that there are only five minimal non-heroes.
Let $N$ be the tournament with five vertices $\{v_1,v_2,v_3,v_4,v_5\}$ such that $v_i$ is adjacent to $v_j$ for $2\le i<j\le 5$ and $v_1$ is complete to $\{v_2,v_4\}$ and complete from $\{v_3,v_5\}$.
Let $S_n$ be the tournament with $(2n-1)$ vertices $v_1,v_2,\ldots,v_{2n-1}$ such that $v_i$ is adjacent to $v_j$ if and only if $j-i \in \{1,2,\ldots,n-1\} \mod (2n-1)$.
Let $\D_2=\D(L_2,L_2,L_2)$.

\begin{THM}[Berger et al. \cite{hero}]
A tournament $H$ is a hero if and only if $H$ does not contain $D_3$, $U_3$, $N$, $S_3$ or $\D_2$.
\end{THM}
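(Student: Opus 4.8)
The statement is an equivalence, and the two directions call for genuinely different arguments. For ``$H$ a hero $\Rightarrow$ $H$ contains none of $D_3,U_3,N,S_3,\D_2$'' I would argue the contrapositive, using that the class of non-heroes is upward closed under containment (equivalently, that heroes are closed under subtournaments): if $F$ is a non-hero, witnessed by tournaments $T_1,T_2,\ldots$ with $\chi(T_k)\to\infty$ and each $T_k$ being $F$-free, then any tournament $H$ containing $F$ is also a non-hero, since every $F$-free tournament is automatically $H$-free (a copy of $H$ would contain a copy of $F$). Hence it suffices to prove that each of the five tournaments is itself a non-hero. I would not use a single construction for this: $D_3$ is a non-hero by Lemma~\ref{lem:nonhero-d3} (it lies in $\mathcal{D}$ and contains $D_3$); for $U_3$, $N$ and $S_3$ I would use the family $\{D_n\}$, which has $\chi(D_n)=n$ by Proposition~\ref{LEM:Dn} while containing no prime subtournament on four or more vertices --- and $U_3$, $N$, $S_3$ are precisely the three prime tournaments on five vertices; for $\D_2$ I would use the family $\{A_n\}$, which has $\chi(A_n)=n$ by Proposition~\ref{LEM:An} and which one checks is $\D_2$-free, using that every strong subtournament in $\mathcal{A}$ has a $\D$-partition (Proposition~\ref{PROP:upartition}). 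The two combinatorial verifications --- ``$D_n$ has no prime subtournament on $\ge 4$ vertices'' and ``$A_n$ is $\D_2$-free'' --- are short inductions on the recursive constructions of $D_n$ and $A_n$.

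\textbf{For the converse}, ``$H$ contains none of the five $\Rightarrow$ $H$ is a hero'', I would induct on $|V(H)|$, the engine being Theorem~\ref{hero}. The base case is trivial. If $H$ is not strongly connected, every strong component of $H$ is a proper subtournament, hence contains none of the five, hence is a hero by the induction hypothesis, and Theorem~\ref{hero}(1) gives that $H$ is a hero. So assume $H$ is strongly connected with at least two vertices; I claim $H$ admits a trisection $(\{v\},X,Y)$ in which one of $H|X$, $H|Y$ is transitive. Granting the claim, $H=\D(I,H|X,H|Y)$, both $H|X$ and $H|Y$ contain none of the five (being subtournaments of $H$), one of them is transitive and hence a hero (an $L_k$-free tournament has fewer than $2^{k-1}$ vertices, so has bounded chromatic number), and the other is a hero by the induction hypothesis; Theorem~\ref{hero}(2) then shows $H$ is a hero. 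It remains to establish the claim, which I would do in three steps, one for each of the remaining types of forbidden tournaments.

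\textbf{Step 1 (existence of a trisection, via $U_3,N,S_3$).} Using the modular decomposition of $H$: if $S$ is a homogeneous set of $H$, then a trisection of $H/S$ lifts to a trisection of $H$ by blowing $S$ back up, so ``$H$ has no trisection'' propagates to every quotient of $H$. Iterating, if $H$ had no trisection then its top prime quotient $H^{*}$ would have none either; but $H^{*}$ is prime on at least three vertices (as $H$ is strong, $H^{*}$ is neither transitive nor a two-vertex tournament), and $H^{*}$ is not $C_3$ (which has a trisection), so $|V(H^{*})|\ge 5$, and then $H^{*}$ --- hence $H$ --- contains a prime tournament on five vertices, i.e.\ one of $U_3$, $N$, $S_3$. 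As $H$ contains none of these, $H$ has a trisection $(A,B,C)$. \textbf{Step 2 (making a part a singleton, via $\D_2$).} If every part of $(A,B,C)$ had at least two vertices, picking two vertices from each part would induce $\D(L_2,L_2,L_2)=\D_2$ inside $H$; so some part, say $A$, is a single vertex $v$, and $H=\D(\{v\},H|B,H|C)$. \textbf{Step 3 (a transitive side, via $D_3$).} If neither $H|B$ nor $H|C$ were transitive, each would contain a cyclic triangle, and then $H$ would contain $\D(I,C_3,C_3)=D_3$; hence one of $H|B$, $H|C$ is transitive. This proves the claim.

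\textbf{Where the difficulty lies.} Modulo Theorem~\ref{hero}, which we are permitted to invoke, the scheme above is short; the real content is concentrated in two places. On the non-hero side it is the fact that $\{D_n\}$ avoids all prime tournaments on $\ge 4$ vertices and that $\{A_n\}$ avoids $\D_2$ (the chromatic lower bounds being already supplied by Propositions~\ref{LEM:Dn} and~\ref{LEM:An}). On the hero side it is Step 1: the classification of the prime tournaments on five vertices as exactly $U_3$, $N$, $S_3$, the fact that there is no prime tournament on four vertices, and the structural fact that every prime tournament on at least five vertices contains a prime five-vertex subtournament. This classification is a finite but delicate case analysis and is in effect where the substance of the theorem sits; the rest is bookkeeping around Theorem~\ref{hero}.
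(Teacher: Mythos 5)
Your overall scheme (downward closure of heroes reduces the forward direction to exhibiting, for each of the five tournaments, an unbounded-chromatic family avoiding it; the converse by induction through Theorem~\ref{hero}, modular decomposition, and the classification of small prime tournaments) is reasonable, and note that the paper itself does not prove this statement at all --- it quotes it from Berger et al. But your forward direction has a genuine error: the family $\{A_n\}$ is \emph{not} $\D_2$-free. The paper proves exactly the opposite in Lemma~\ref{LEM:A}: $\D_2\in\mathcal{A}$, witnessed by $A_5$. Your suggested verification via Proposition~\ref{PROP:upartition} cannot work, because $\D_2$ itself admits a $\D$-partition (any trisection is one), so nothing in that proposition obstructs it. In addition, your treatment of $D_3$ is circular: the direction of Lemma~\ref{lem:nonhero-d3} you invoke (``$D\in\mathcal{D}$ containing $D_3$ is a non-hero'') is precisely the assertion that $D_3$ is a non-hero, and the paper's proof of that lemma only checks that $U_3,N,S_3,\D_2\notin\mathcal{D}$, presupposing the Berger et al.\ characterization you are trying to prove.

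The fix is to swap your two witness families. For $D_3$, use $\{A_n\}$: Lemma~\ref{LEM:A} shows directly (via Propositions~\ref{PROP:upartition} and~\ref{Uhomo}, with no appeal to the hero characterization) that $D_3\notin\mathcal{A}$, and $\chi(A_n)=n$ by Proposition~\ref{LEM:An}. For $\D_2$, use $\{D_n\}$: your ``no prime subtournament on $\ge 4$ vertices'' argument does not cover $\D_2$, since $\D_2$ is not prime, but the same induction gives more, namely that every strongly connected subtournament of $D_n$ with at least two vertices has a trisection with a singleton part (equivalently, a vertex whose deletion destroys strong connectivity), whereas deleting any vertex of $\D_2$ leaves $\D(I,L_2,L_2)$, which is strong; hence $D_n$ is $\D_2$-free, and $\chi(D_n)=n$ by Proposition~\ref{LEM:Dn}. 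This is exactly the argument in the paper's proof of Lemma~\ref{lem:nonhero-d3}. Your use of $\{D_n\}$ for $U_3,N,S_3$ is fine. On the converse side, your reduction is sound, but be aware that it leans on two uncited facts --- that $U_3,N,S_3$ are the only prime tournaments on five vertices (and none exist on four), and that every prime tournament on at least five vertices contains a prime five-vertex subtournament --- which you correctly flag as carrying real content but do not supply.
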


\begin{figure} [ht!]
\centering
\includegraphics[scale=0.5]{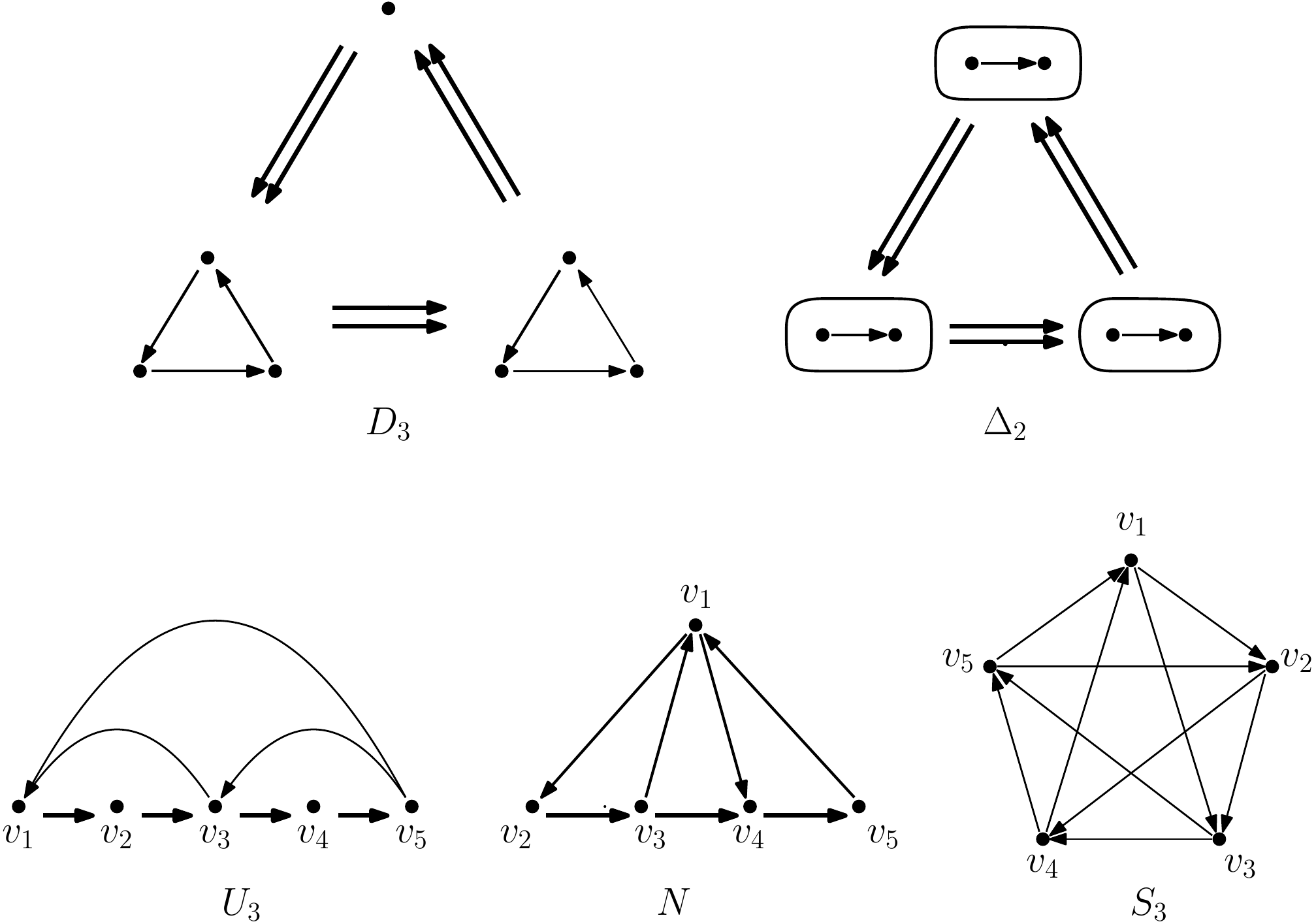}
\caption{Minimal non-heroes}
\end{figure}

In this section, we prove the following lemmas, 
which will be used to prove Theorem~\ref{THM:main2}.  

\reD*

\begin{LEM}\label{LEM:A}
$\mathcal{A}$ contains $U_3$ and $\D_2$, but does not contain $D_3,N$ or $S_3$.
\end{LEM}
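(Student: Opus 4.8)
The plan is to analyze each of the five minimal non-heroes separately, using the recursive structure $A_n = \Delta(I^{(1)}, A_{n-1}^{(1)}, I^{(2)}, \ldots, A_{n-1}^{(n-1)}, I^{(n)})$ together with the fact that $\mathcal{A}$ is the closure (under subtournaments) of $\{A_n\}$. For the positive statements ($U_3, \Delta_2 \in \mathcal{A}$), I would exhibit explicit subtournaments of some small $A_n$. For $U_3 = \Delta(I,I,I,I,I)$, note that $U_3$ is exactly $A_3$ with each copy of $A_2$ (a cyclic triangle $\Delta(I,I,I)$) contracted to a single vertex — more precisely, $A_3 = \Delta(I,A_2,I,A_2,I)$, and picking one vertex from each $A_2^{(i)}$ together with the three singletons gives a five-vertex subtournament isomorphic to $U_3$; one checks the edge directions against the $\Delta$-partition rules (odd-indexed parts point backward, even-indexed parts point forward). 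For $\Delta_2 = \Delta(L_2,L_2,L_2)$, I would look inside $A_4 = \Delta(I, A_3, I, A_3, I, A_3, I)$ (or possibly $A_3$ directly after checking sizes): a trisection of a larger $A_n$ is itself a $\Delta$-partition obtained by grouping consecutive parts, so I would group the seven parts of $A_4$ into three blocks and find a copy of $L_2$ in the appropriate subtournament of each block. Since every $A_n$ restricted to an interval of consecutive $\Delta$-parts with the correct parity pattern behaves like a smaller $\Delta$-structure, and $A_3$ already contains $L_3 \supseteq L_2$, locating three vertex-disjoint transitive pairs with the trisection edge-pattern between them should be a direct verification.

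For the negative statements I would argue by induction on $n$ that $A_n$ contains none of $D_3$, $N$, $S_3$. The key structural tool is Proposition~\ref{PROP:upartition} (every strong tournament in $\mathcal{A}$ has a $\D$-partition) and Proposition~\ref{Uhomo}/\ref{Uprime} (the homogeneous sets and prime members of $\mathcal{A}$ are well understood). Since $D_3$, $N$, and $S_3$ are all strongly connected, any copy inside a member $T$ of $\mathcal{A}$ sits inside the strong component structure; and if $T \in \mathcal{A}$ is strong it has a $\D$-partition $(\{v_1\}, X_1, \ldots, X_{n-1}, \{v_n\})$ with each $X_i \in \mathcal{A}$. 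I would then show: any subtournament meeting at most one $X_i$ is (up to the singletons) a subtournament of a smaller $A_m$, so induction applies; and any subtournament meeting two or more of the $X_i$'s inherits a $\D$-partition structure that forces it to be of the "$U$-like" / layered type, which none of $D_3, N, S_3$ are. Concretely, $D_3 = \Delta(I, D_2, D_2)$ where $D_2$ is a cyclic triangle — so $D_3$ would need a trisection into $(\{v\}, \text{cyclic triangle}, \text{cyclic triangle})$, but grouping the $\D$-parts of $A_n$ into three blocks never produces two cyclic-triangle blocks with a single vertex between them (the "backward" odd parts and "forward" even parts can't be arranged that way); a careful parity/counting argument on which parts land in which block should rule it out. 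For $S_3$ (the regular tournament on $5$ vertices, i.e.\ $C_5$-rotational) and $N$, I would similarly observe that $S_3$ is prime, so by Proposition~\ref{Uprime} the only prime members of $\mathcal{A}$ on $\ge 3$ vertices are the $U_n$'s, and $S_3 \ne U_n$ for any $n$ (e.g.\ $S_3$ is vertex-transitive with every vertex of out-degree $2$, whereas $U_n$ has vertices of out-degree $0$ such as the "sink" singleton), hence $S_3 \notin \mathcal{A}$; for $N$, which is not prime, I would locate its unique maximal homogeneous set, apply Proposition~\ref{Uhomo} to constrain how $N$ could embed into a strong member of $\mathcal{A}$, and derive a contradiction by examining the quotient.

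The main obstacle I anticipate is the $\Delta_2$-containment and the $N$-exclusion, because these are the two cases where the clean "prime $\Rightarrow U_n$" shortcut does not immediately finish the job: $\Delta_2$ has homogeneous sets (the three $L_2$'s) so I must actually produce the embedding by hand inside an explicit $A_n$ and verify all $\binom{6}{2}$ edge directions, and $N$ has a homogeneous set as well, so ruling it out requires the more delicate argument via Proposition~\ref{Uhomo} about how non-prime tournaments embed into strong members of $\mathcal{A}$, rather than a one-line primality obstruction. The rest should reduce to bookkeeping on $\D$-partitions and a straightforward induction.
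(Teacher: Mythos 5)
Your plan fails at the case of $N$, and it fails on a point of fact: $N$ \emph{is} prime. The paper records this just before its proofs (``we note that $U_3$, $N$ and $S_3$ are prime''), and it is quick to verify that no set of $2$, $3$ or $4$ vertices of $N$ is homogeneous (for any candidate set one of the outside vertices is mixed on it; in particular $v_1$ is mixed on every set containing one of $v_2,v_4$ and one of $v_3,v_5$). So your step ``locate its unique maximal homogeneous set'' cannot be carried out, and the proposed quotient argument via Proposition~\ref{Uhomo} never starts. The correct fact makes the case easier, not more delicate: exactly as in your treatment of $S_3$, Proposition~\ref{Uprime} says the only prime tournaments in $\mathcal{A}$ on at least three vertices are the $U_n$'s, and $N$ is not isomorphic to $U_3$, the unique five-vertex $U_n$ (their out-degree sequences are $(3,3,2,1,1)$ versus $(3,2,2,2,1)$). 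This one-line argument, which is the paper's, disposes of $N$ and $S_3$ simultaneously; you singled out the wrong case as the hard one.

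Two smaller corrections. For $\D_2$: grouping consecutive parts of the $\D$-partition of $A_4$ into three blocks does not yield a trisection, since $V_i\Rightarrow V_j$ whenever $i<j$ and one of $i,j$ is even, so the ``last block complete to first block'' requirement forces the outer blocks to consist of odd singleton parts only. An explicit embedding respecting this does exist: in $A_4=\D(I,A_3,I,A_3,I,A_3,I)$ with singletons $w_1,w_2,w_3,w_4$, take $\{w_1,w_2\}$, two vertices of the middle copy of $A_3$, and $\{w_3,w_4\}$ as the three $L_2$'s (the paper simply exhibits $\D_2$ inside $A_5$). Your fallback ``possibly $A_3$'' does not work: any copy of $\D_2$ in $A_3$ must avoid the first and last singleton, and what remains has strong components of size at most three, so $\D_2\not\subseteq A_3$. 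For $D_3$, your parity/counting sketch points at the right tools but should be replaced by the paper's crisp argument: if $D_3\in\mathcal{A}$, it is strong, so by Proposition~\ref{PROP:upartition} it has a $\D$-partition $(\{v_1\},X_1,\ldots,X_{n-1},\{v_n\})$; its two cyclic triangles are maximal homogeneous sets, hence by Proposition~\ref{Uhomo} they are distinct parts $X_i,X_j$, so $n\ge 3$ and $7=|V(D_3)|\ge n+|X_i|+|X_j|\ge 9$, a contradiction. Finally, your auxiliary inductive claim that a subtournament of $A_n$ meeting at most one $X_i$ sits inside a smaller $A_m$ is not true as stated and is not needed for any of the five cases.
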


\begin{LEM}\label{LEM:F}
$\mathcal{F}$ contains $U_3$ and $N$, but does not contain $D_3,S_3$ or $\D_2$.
\end{LEM}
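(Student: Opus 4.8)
The plan is to establish the four membership/non-membership claims about the forest-tournament class $\mathcal{F}$ essentially by hand, exhibiting explicit forest orderings when a tournament lies in $\mathcal{F}$, and using the structural obstructions proved earlier (Corollary~\ref{nocycle} and Proposition~\ref{twocolor}) when it does not. Since $\mathcal{F}$ is hereditary (Proposition~\ref{closed}), it suffices to deal with each of the five small tournaments $U_3$, $N$, $D_3$, $S_3$, $\D_2$ directly.

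\textbf{Showing $U_3,N\in\mathcal{F}$.} For each of these I would write down a vertex ordering $\sigma$ and verify recursively that it is a forest ordering by producing the forest cut at each level. For $U_3=\D(I,I,I,I,I)$ on vertices $v_1,\dots,v_5$, take the ordering in which the backward edges form a path; one then checks that cutting off a suitable single vertex (or splitting into two pieces whose backedge graphs share no component across the cut) works, and each piece, having at most one backward edge, is trivially a forest tournament. For $N$ (five vertices, $v_2v_3v_4v_5$ transitive, $v_1$ complete to $\{v_2,v_4\}$ and from $\{v_3,v_5\}$), I would try the natural ordering $v_1,v_2,v_3,v_4,v_5$ (or a small permutation thereof), compute $B_\sigma(N)$ explicitly, observe it is a forest, and then exhibit a cut at which no two crossing edges lie in the same component, recursing on the two halves. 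The work here is a finite check of a handful of orderings.

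\textbf{Showing $D_3,S_3,\D_2\notin\mathcal{F}$.} The cleanest uniform tool is that every forest tournament has $\chi\le 2$ by Proposition~\ref{twocolor}. So I would simply observe $\chi(D_3)=3$ by Proposition~\ref{LEM:Dn}, and likewise check $\chi(S_3)=3$ and $\chi(\D_2)=3$ (for $\D_2=\D(L_2,L_2,L_2)$: any $2$-coloring leaves, in each $L_2$, two vertices getting a total of two colors across three transitive classes, and a pigeonhole/monochromatic-cyclic-triangle argument as in Proposition~\ref{LEM:Dn} forces a third color — indeed $\D_2$ contains $D_3$ as a subtournament, which settles it immediately by heredity; similarly $S_3$ is the ``rotative'' $5$-vertex tournament and one checks it contains $D_3$ or directly that $\chi(S_3)=3$). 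Hence none of them can be a forest tournament. If an alternative to the chromatic-number argument is wanted for $S_3$, one can instead show $B_\sigma(S_3)$ contains an induced cycle for every ordering $\sigma$, contradicting Corollary~\ref{nocycle}; but the $\chi\le 2$ route is shorter.

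The main obstacle is the positive direction for $N$: unlike $U_3$, whose high symmetry and ``path-like'' backedge structure make a forest ordering transparent, $N$ is less symmetric, so I expect to spend the bulk of the effort trying a few candidate orderings, drawing $B_\sigma(N)$, and pinning down an explicit forest cut at each recursion level; once the right ordering is identified the verification is routine. The negative direction is essentially immediate given Proposition~\ref{twocolor} together with the fact that $D_3$ embeds in $\D_2$ and in $S_3$ (or the direct computation $\chi(S_3)=\chi(\D_2)=3$).
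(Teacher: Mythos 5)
Your positive direction ($U_3,N\in\mathcal{F}$) follows the paper's approach: the paper simply exhibits the orderings (for $U_3$, backward edges $\{v_1v_4,v_2v_5\}$; for $N$, backward edges $\{u_1u_3,u_1u_5\}$), and your plan of finding such orderings by a finite check is fine. The serious problem is the negative direction for $S_3$ and $\D_2$, where your argument is simply wrong. First, $\chi(S_3)=\chi(\D_2)=2$, not $3$: in $S_3$ the set $\{v_1,v_2,v_3\}$ is transitive ($v_1\to v_2\to v_3$, $v_1\to v_3$) and so is $\{v_4,v_5\}$, giving a $2$-coloring; in $\D_2=\D(L_2,L_2,L_2)$ with parts $\{x_1,x_2\},\{y_1,y_2\},\{z_1,z_2\}$, the sets $\{x_1,x_2,y_1\}$ and $\{y_2,z_1,z_2\}$ are transitive, again a $2$-coloring. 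So Proposition~\ref{twocolor} rules out only $D_3$ (which indeed has $\chi=3$ by Proposition~\ref{LEM:Dn}); it gives nothing for $S_3$ or $\D_2$. Second, your fallback claim that ``$D_3$ embeds in $\D_2$ and in $S_3$'' is impossible on vertex count alone: $D_3$ has $7$ vertices while $\D_2$ has $6$ and $S_3$ has $5$ (and, as the paper notes, all five of $D_3,U_3,N,S_3,\D_2$ are \emph{minimal} non-heroes, so none contains another).

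Consequently your proposal has no valid proof that $S_3,\D_2\notin\mathcal{F}$, and this is exactly where the paper does its real work: a case analysis over candidate forest orderings, showing that for every ordering $\sigma$ the backedge graph either contains an induced cycle (contradicting Corollary~\ref{nocycle}) or is connected with thickness two (contradicting Proposition~\ref{onecomponent}), or that no forest cut exists at the top level. Your one-line aside that ``one can instead show $B_\sigma(S_3)$ contains an induced cycle for every ordering'' is not an argument, and in fact for $S_3$ the paper's contradiction comes from the thickness bound of Proposition~\ref{onecomponent}, not only from induced cycles; you offer no alternative at all for $\D_2$. To repair the proposal you would need to carry out these ordering analyses (or an equivalent structural obstruction) for both $S_3$ and $\D_2$.
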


We remark that Lemma~\ref{lem:nonhero-d3} is equivalent to that $D_3$ is the only minimal non-hero contained in $\mathcal{D}$.
Before proving the lemmas, we note that $U_3, N$ and $S_3$ are prime, that is, they do not contain homogeneous sets. We also note that every minimal non-hero is strongly connected.

\begin{proof}[Proof of Lemma~\ref{lem:nonhero-d3}]

Clearly, $D_3 \in \mathcal{D}$ by the definition of $\mathcal{D}$.

Suppose either $U_3$, $N$,  $S_3$ or $\D_2$ is contained in $\mathcal{D}$, say $X$. 
Let $k$ be the minimum integer such that $D_k$ contains $X$. 
Since $X$ is contained in $D_k$ but not in $D_{k-1}$, and $X$ is strongly connected, 
it follows that $X$ has a trisection $(A,B,C)$ with $|A|=1$, say $A=\{a\}$.
So, $X\setminus a$ is not strongly connected. 
Since there is no such vertex $a$ in $\D_2$, $X$ is either $U_3$, $N$ or $S_3$, which implies that $X$ is prime. 
So, $B$ and $C$ also contain only one vertex, which yields  a contradiction since $|V(X)|=5$.
This completes the proof.
\end{proof}

\begin{proof}[Proof of Lemma~\ref{LEM:A}]
Clearly, $U_3,\D_2 \in \mathcal{A}$ since $A_3$ contains $U_3$ and $A_5$ contains $\D_2$. 
It is also trivial that $N, S_3 \not \in \mathcal{A}$ by Proposition~\ref{Uprime} since $N$ and $S_3$ are prime but not isomorphic to $U_3$.

To show that $D_3 \not\in \mathcal{A}$, 
suppose $D_3 \in \mathcal{A}$. Since $D_3$ is strongly connected, it has a $\D$-partition $(\{v_1\},X_1,\{v_2\},X_2,\ldots,X_{n-1},\{v_n\})$ by Proposition~\ref{PROP:upartition}. 
Let $(\{x\},Y,Z)$ be a trisection of $D_3$ where $D_3|Y$ and $D_3|Z$ are cyclic triangles. 
Since $Y$ and $Z$ are maximal homogeneous sets of $D_3$, it follows that  $Y=X_i$ and $Z=X_j$ for some $i<j$ by Proposition~\ref{Uhomo}.
Thus, we obtain that $n\ge 3$, which is a contradiction 
since $7=|V(D_3)|\ge n+ |X_i|+|X_j| \ge 9$.
Therefore, $D_3$ does not belong to $\mathcal{A}$.
\end{proof}

\begin{proof}[Proof of Lemma~\ref{LEM:F}]
Observe that $U_3$ has a forest ordering $v_1,v_2,v_3,v_4,v_5$ with backward edges $\{v_1v_4,v_2v_5 \}$, 
and $N$ has a forest ordering $u_1,u_2,u_3,u_4,u_5$ with backward edges $\{u_1u_3,u_1u_5\}$. 
Hence, $U_3, N\in \mathcal{F}$.

Since $D_3$ has chromatic number three,  $D_3$ is not a forest tournament by Proposition~\ref{twocolor}.

To prove $S_3 \not \in \mathcal{F}$, suppose $S_3$ is a forest tournament, and $\sigma$ is its forest ordering. 
Let $V(S_3)= \{v_1,v_2,\ldots,v_5\}$ such that $v_i$ is adjacnet to $v_j$ if and only if $j-i \equiv \{1,2\} \mod 5$. 
Since $S_3$ is vertex-transitive, 
we may assume that $v_1$ is the first vertex in $\sigma$. The in-neighbors of $v_1$ are $v_4$ and $v_5$, so $v_4<_{\sigma}v_5$ since otherwise, $\{v_1,v_4,v_5\}$ induces a triangle in $B_{\sigma}(S_3)$.
If $v_2<_{\sigma}v_5$, then $B_{\sigma}(S_3)|\{v_1,v_2,v_4,v_5\}$ has thickness two, but it has only one component which yields a contradiction by Proposition~\ref{onecomponent}. Thus, $v_5<_{\sigma}v_2$, and we have $\sigma|\{v_1,v_2,v_4,v_5\}=v_1,v_4,v_5,v_2$.
Then, no matter where $v_3$ is, $B_{\sigma}(S_3)$ is connected and has thickness two, 
again a contradiction  by Proposition~\ref{onecomponent}. 
Hence, $S_3 \not \in \mathcal{F}$.

Suppose $\D_2$ is a forest tournament. Let $(\{v_1,v_2\},\{v_3,v_4\},\{v_5,v_6\})$ be a trisection of $\D_2$ where $v_i$ is adjacent to $v_{i+1}$ for $i=1,3,5$,
and $\sigma$ be a forest ordering of $\D_2$. 
We may assume that either $v_1$ or $v_2$ is the first vertex in $\sigma$. 

Let $v_1$ be the first vertex in $\sigma$. Then, $v_5 <_{\sigma} v_6$, since otherwise $\{v_1,v_5,v_6\}$ induces a triangle in $B_{\sigma}(\D_2)$.  
If $v_6<_{\sigma} v_3$, then $\{v_1,v_3,v_5,v_6\}$ induces a cycle in $B_{\sigma}(\D_2)$. So, $v_3 <_{\sigma} v_6$, and, by the same reason, $v_4 <_{\sigma} v_6$.

If $v_2 <_{\sigma} v_5$, then $B_{\sigma}(\D_2)|\{v_1,v_2,v_5,v_6\}$ is a cycle of length four.
If $v_5 <_{\sigma} v_2<_{\sigma} v_6$, then  the induced subgraph $B_{\sigma}(\D_2)|\{v_1,v_2,v_3, v_5,v_6\}$ is connected and has thickness two.
So, Corollary~\ref{nocycle} and Proposition~\ref{onecomponent} imply
 $v_6 <_{\sigma}v_2$, and so $v_3 <_{\sigma} v_4$ since otherwise, $\{v_2,v_3,v_4\}$ induces a triangle in $B_{\sigma}(\D_2)$. 
If $v_5<_{\sigma} v_4 <_{\sigma}v_6$, then $B_{\sigma}(\D_2)$ is connected and has thickness two, so Proposition~\ref{onecomponent} implies that $v_3<_{\sigma} v_4 <_{\sigma} v_5$.  
Now we have $\sigma=v_1,v_3,v_4,v_5,v_6,v_2$ and the backward edges $v_5v_1,v_6v_1,v_2v_3,v_2v_4$. 
However, in this case, there is no forest cut of $\D_2$ under $\sigma$. It is a contradiction.

Hence, $v_2$ is the first vertex in $\sigma$. Again, $v_5 <_{\sigma} v_6$. If $v_1 <_{\sigma} v_6$, then $\{v_1,v_2,v_6\}$ induces a triangle in $B_{\sigma}(\D_2)$. Hence, $v_6 <_{\sigma}v_1$. Then, no matter where $v_3$ is, $B_{\sigma}(\D_2)|\{v_1,v_2,v_3,v_5,v_6\}$ is connected and has thickness two, a contradiction by Proposition~\ref{onecomponent}. Therefore $\D_2 \not \in \mathcal{F}$.
\end{proof}

\section{Characterization of tournaments in $\mathcal{A}\cap \mathcal{F}$}\label{SEC:mainproof}

In this section, we characterize all tournaments in $\mathcal{A}\cap \mathcal{F}$. 
We simply write $\mathcal{AF}$ for $\mathcal{A}\cap \mathcal{F}$.

\begin{THM}\label{THM:main2-1}
Let $H$ be a tournament. Then, $H \in \mathcal{AF}$ if and only if it is isomorphic to one of the following.
\begin{itemize}
\item[1)]  $I$;
\item[2)] $H_1\Rightarrow H_2$ for $H_1,H_2 \in \mathcal{AF}$;
\item[3)] $\Delta(I,L_k,H')$ or $\Delta(I,H',L_k)$ for an integer $k$ and $H'\in \mathcal{AF}$;
\item[4)] $\D(L_{k_1},I,L_{k_2},L_{k_3}, I)$ or $\D(I,L_{k_1},L_{k_2},I, L_{k_3})$ for integers $k_1,k_2,k_3$;
\item[5)] $\D(I,H',L_{k_1},L_{k_2}, I)$ or $\D(I,L_{k_1},L_{k_2},H', I)$ for integers $k_1$, $k_2$ and $H' \in \mathcal{AF}$;
\item[6)] $\D(I,L_{k_1}, L_{k_2}, I,L_{k_3},L_{k_4},I)$ for integers $k_1,k_2,k_3,k_4$.
\end{itemize}
\end{THM}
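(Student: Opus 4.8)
The plan is to prove the two directions separately: the reverse (``if'') direction by exhibiting, for each tournament on the list, an embedding into a large $A_N$ together with an explicit forest ordering, and the forward (``only if'') direction by a case analysis on the structure of $H$. The only ``global'' inputs are that $\mathcal{A}$, $\mathcal{F}$, and hence $\mathcal{AF}$, are hereditary (clear from the definition of a closure and from Proposition~\ref{closed}).

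For the ``if'' direction I would first note that $\mathcal{A}$ and $\mathcal{F}$ are each closed under $(H_1,H_2)\mapsto H_1\Rightarrow H_2$: if $H_1,H_2\subseteq A_N$ then $H_1\Rightarrow H_2$ embeds into the $2$nd and $4$th parts of $A_{N+1}$ (two copies of $A_N$, the first complete to the second), and for $\mathcal{F}$ one simply concatenates forest orderings, the cut between the two blocks having no crossing back-edge. This settles 1) and 2). For 3)--6) I would embed the prescribed $\Delta$-partition into a large $A_N$, using that the odd-indexed parts of $A_N$ are singletons inducing a transitive tournament, while the even-indexed parts are copies of $A_{N-1}$ that can absorb any member of $\mathcal{A}$; a judicious choice of which odd singletons to keep and of which even part receives $H'$ produces each listed form. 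Membership in $\mathcal{F}$ is then verified one case at a time by writing down a forest ordering: list the vertices of each transitive factor $L_k$ in transitive order and place them, together with a forest ordering of $H'$, so that the endpoint vertices $I$ acquire small degree in the back-edge graph; one then checks that $B_\sigma$ is a forest and that the recursive forest cuts peel off the $L_k$-vertices (and finally $H'$) one at a time.

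For the ``only if'' direction, let $H\in\mathcal{AF}$. If $|V(H)|=1$ we are in 1). If $H$ is not strongly connected, write $H=H_1\Rightarrow H_2$ with $H_1,H_2$ nonempty; since $\mathcal{AF}$ is hereditary, $H_1,H_2\in\mathcal{AF}$ and $H$ is of type 2). So suppose $H$ is strongly connected with at least three vertices. By Proposition~\ref{PROP:upartition} (and its proof) $H$ has a $\Delta$-partition $\Delta(W_1,\dots,W_{2n-1})$ whose odd-indexed parts are single vertices inducing a transitive tournament; let $e$ be the number of nonempty even-indexed parts. Two structural bounds are key. First, $e\le 3$: selecting one vertex from each nonempty even part and one odd singleton before the first, between consecutive ones, and after the last yields a copy of $U_{e+1}$, so $e\ge 4$ would force $U_5\subseteq H$; but $U_5\notin\mathcal{F}$ --- a fact I would establish by an ad hoc analysis of a hypothetical forest ordering, in the spirit of the $S_3$ and $\Delta_2$ computations in the proof of Lemma~\ref{LEM:F}. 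Second, at most one nonempty even part is non-transitive: if two of them contained a cyclic triangle, then together with three suitable odd singletons $H$ would contain $\Delta(I,C_3,I,C_3,I)=A_3$, which has chromatic number $3$ by Proposition~\ref{LEM:An}, contradicting $\chi(H)\le 2$ (Proposition~\ref{twocolor}). Collapsing empty even parts and merging each maximal run of odd singletons then exhibits $H$ as $\Delta(L_{*},B_1,L_{*},\dots,B_e,L_{*})$ with $B_1,\dots,B_e\in\mathcal{AF}$, at most one of them non-transitive. In each of the cases $e=1,2,3$, I would then argue that $H$, after applying the isomorphism $\Delta(X,Y,Z)\cong\Delta(Z,X,Y)$ and re-normalising the $\Delta$-partition, coincides with exactly one of 3), 4), 5), 6); the main tools here are the exclusions $D_3,\Delta_2\notin\mathcal{F}$ (together with $C_3\supseteq L_2$), which prevent three transitive factors, or two transitive factors flanking a non-transitive one, from appearing consecutively as a trisection, thereby forcing the singletons into the required positions.

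The main obstacle is this last, case-by-case step in the strong case. The two structural bounds $e\le 3$ and ``at most one non-transitive even part'' are clean, but converting them --- together with the exclusions of $D_3$, $\Delta_2$, and $U_5$ --- into the precise statement that a non-transitive factor can occur only where and with the multiplicity permitted by 3)--6), and then re-writing the $\Delta$-partition (collapsing empty even parts, merging runs of singletons into factors $L_k$, using the cyclic symmetry of trisections) so that its description literally matches one of the listed forms, is a finite but delicate piece of bookkeeping --- roughly one subcase per value of $e$ and per placement of a non-transitive factor --- and may require a couple of further small ``$X\notin\mathcal{F}$'' verifications of the same kind as $U_5\notin\mathcal{F}$. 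That is where essentially all of the work lies.
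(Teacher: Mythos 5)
Your ``if'' direction coincides with the paper's (explicit embeddings into a large $A_N$ together with explicit forest orderings, case by case), but your ``only if'' direction is a genuinely different route. The paper goes through the modular decomposition: the prime quotient $G_0$ of $H$ lies in $\mathcal{AF}$, hence by Proposition~\ref{Uprime} and $\D_2\notin\mathcal{F}$ it is $L_2$, $U_2$, $U_3$ or $U_4$, and the admissible substitutions are then pinned down by Lemmas~\ref{AF3}, \ref{AF5} and \ref{AF7}; the two hard ingredients there are (a) the $\mathcal{A}$-side exclusion of a cyclic triangle in an odd slot of a $5$-part $\D$-partition, proved by the homogeneous-set count of Proposition~\ref{Uhomo}, and (b) the ad hoc verification $\D(I,C,I,I,L_2)\notin\mathcal{F}$. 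You instead work directly with the canonical $\D$-partition of Proposition~\ref{PROP:upartition}, merge maximal runs of odd singletons, and bound the shape via $U_5$- and $A_3$-containment. This buys you (a) for free, since in your normalized partition the odd parts are automatically transitive, which is a real simplification. Two smaller remarks: you do not need a separate ``$U_5\notin\mathcal{F}$'' argument, because $U_5$ already contains $\D_2$ (take the pairs in positions $\{1,3\}$, $\{4,6\}$, $\{7,9\}$); and when hunting for $\D_2$ you should allow its three pairs to cut across your parts --- e.g.\ $\D(I,L_{2},I,L_{2},L_{2})$ contains $\D_2$ on the pairs $\{$position $3$ with position $1\}$, position $4$, position $5$ --- and such ``mixed'' copies are exactly what forces the singleton positions in the all-transitive subcases and in the case $e=3$.

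The one place where the toolkit you name is genuinely short is ingredient (b). With $e=2$ and the non-transitive even part in the second slot, $\D_2$ does force the first odd run to be a singleton, but you must still exclude the pattern $\D(I,C,I,I,L_2)$ (triangle in slot $2$, a transitive part of size at least $2$ in slot $5$); this tournament contains neither $\D_2$ nor $D_3$ and is $2$-colourable, so it cannot be killed by ``$D_3,\D_2\notin\mathcal{F}$ plus $\chi\le 2$''. It is precisely claim (2) inside the paper's proof of Lemma~\ref{AF5}, established there by a careful analysis of a hypothetical forest ordering using Corollary~\ref{nocycle} and Proposition~\ref{onecomponent} (an $\mathcal{A}$-side proof via the homogeneous-set count of Proposition~\ref{Uhomo} is also possible). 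So your anticipated ``couple of further verifications'' is not optional bookkeeping: this one exclusion (together with its complement, $\mathcal{F}$ being closed under complementation) is indispensable. Once it is in place your plan does close: it also disposes of a triangle in an outer even slot when $e=3$, by heredity, since $\D(I,C,I,I,I,I,I)\supseteq\D(I,C,I,I,L_2)$, while a triangle in the middle even slot and all remaining singleton constraints follow from $\D_2$ alone.
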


Theorem~\ref{THM:main2-1} implies  Theorem~\ref{THM:main2} as follows.

\begin{proof}[Proof of Theorem~\ref{THM:main2}, assuming Theorem~\ref{THM:main2-1}]
Let $D\in \mathcal{D}'$ and $H$ be a $D$-hero. 
Lemma~\ref{LEM:A} together with Lemma~\ref{LEM:F} imply $D_3 \notin \mathcal{A} \cup \mathcal{F}$. So, since $D$ contains $D_3$, $D \notin A \cup F$.
So, by Theorem~\ref{THM_meet1} and Theorem~\ref{THM_meet2}, $H$ must belong to $\mathcal{AF}$.
Therefore, $H$ is a tournament in the list of Theorem~\ref{THM:main2-1}. 
This completes the proof.
\end{proof}

In order to prove Theorem~\ref{THM:main2-1}, we need the following lemmas. Observe that $\mathcal{AF}$ is hereditary since both $\mathcal{A}$ and $\mathcal{F}$ are hereditary.
We denote by $C$ a cyclic triangle.

\begin{LEM}\label{AF3}
Let $H=\Delta(G_1,G_2,G_3)$ for some tournaments $G_1,G_2,G_3$. If $H\in \mathcal{AF}$, then, either $G_1$, $G_2$ or $G_3$ is a one-vertex tournament, and one of the others is transitive.
\end{LEM}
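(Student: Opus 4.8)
The plan is to derive the conclusion from two forbidden configurations, $\D_2$ and $D_3$, exploiting that $\mathcal{AF}$ is hereditary (as noted just above) together with Lemmas~\ref{LEM:A} and~\ref{LEM:F}.

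First I would show that some $G_i$ is a one-vertex tournament. If not, each $G_i$ has at least two vertices and hence contains an edge, i.e.\ a copy of $L_2$, say on $S_i\subseteq V(G_i)$. Since $(V(G_1),V(G_2),V(G_3))$ is a trisection of $H$, the relations $V(G_1)\Rightarrow V(G_2)$, $V(G_2)\Rightarrow V(G_3)$, $V(G_3)\Rightarrow V(G_1)$ restrict to $S_1,S_2,S_3$, so $H|(S_1\cup S_2\cup S_3)=\D(L_2,L_2,L_2)=\D_2$. But $\D_2\notin\mathcal{F}$ by Lemma~\ref{LEM:F}, hence $\D_2\notin\mathcal{AF}$, and since $\mathcal{AF}$ is hereditary this contradicts $H\in\mathcal{AF}$. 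So some $G_i$ is a single vertex; as a trisection is invariant under cyclic rotation of its parts, that is $\D(G_1,G_2,G_3)=\D(G_2,G_3,G_1)$, I may assume $G_1=I$.

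Next I would show $G_2$ or $G_3$ is transitive. Recall that a tournament is transitive if and only if it contains no cyclic triangle $C$ (a tournament with a directed cycle contains a directed triangle). If both $G_2$ and $G_3$ contained a cyclic triangle, on $S_2\subseteq V(G_2)$ and $S_3\subseteq V(G_3)$, then with $V(G_1)=\{v\}$ the set $\{v\}\cup S_2\cup S_3$ would induce a tournament with trisection $(\{v\},S_2,S_3)$ whose parts induce $I$, $C$, $C$, i.e.\ $\D(I,C,C)=D_3$. But $D_3\notin\mathcal{A}$ by Lemma~\ref{LEM:A}, so $D_3\notin\mathcal{AF}$, again contradicting $H\in\mathcal{AF}$. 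Hence one of $G_2,G_3$ contains no cyclic triangle and is therefore transitive. Combining the two steps gives the lemma.

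I do not expect a genuine obstacle here: the argument is two short applications of heredity plus the membership facts already recorded in Lemmas~\ref{LEM:A} and~\ref{LEM:F}. The only points requiring a word of care are the elementary facts that a tournament on at least two vertices contains $L_2$ and that a non-transitive tournament contains a cyclic triangle, together with the use of cyclic symmetry of trisections in the first step, so that the three cases of ``which $G_i$ is a single vertex'' collapse to one.
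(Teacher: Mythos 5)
Your proposal is correct and follows essentially the same argument as the paper: rule out all parts having size at least two via $\D_2\notin\mathcal{F}$ (Lemma~\ref{LEM:F}) and heredity of $\mathcal{AF}$, then rule out two non-transitive parts via $D_3\notin\mathcal{AF}$ (using Lemma~\ref{LEM:A}). The only cosmetic difference is that you justify the reduction to $G_1=I$ by the cyclic symmetry of trisections, which the paper leaves implicit.
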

\begin{proof}
If $|V(G_i)|\ge2$ for every $i=1,2,3$, then $H$ contains $\D_2$, which is a contradiction since $\D_2 \not \in \mathcal{F}$ by Lemma~\ref{LEM:F}. So,  either $|V(G_1)|,|V(G_2)|$ or $|V(G_3)|$ is equal to one. 
We may assume $|V(G_1)|=1$. 

If both of $G_2$ and $G_3$ contain a cyclic triangle, 
then $T$ contains $D_3$, a contradiction since $D_3 \not \in \mathcal{AF}$. So,  either $G_2$ or $G_3$ is a transitive tournament. 
This completes the proof.
\end{proof}

\begin{LEM}\label{AF5}
Let $H=\D(G_1,G_2,G_3,G_4,G_5)$ for tournaments $G_i$. If $H\in \mathcal{AF}$ then $(G_1,G_2,G_3,G_4,G_5)$ is either 
\begin{itemize}
\item  $(L_{k_1},I,L_{k_2},L_{k_3}, I)$, $(I,L_{k_1},L_{k_2},I, L_{k_3})$, 
\item $(I,H',L_{k_1},L_{k_2},I)$ or $(I,L_{k_1},L_{k_2},H',I)$ 
\end{itemize}
for some integers $k_1,k_2,k_3$ and some $H'\in \mathcal{AF}$.
\end{LEM}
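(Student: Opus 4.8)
The plan is to analyze the five-part $\Delta$-partition $H = \D(G_1,G_2,G_3,G_4,G_5) \in \mathcal{AF}$ by repeatedly applying Lemma~\ref{AF3} to the many trisections hidden inside a $\Delta$-partition, and the primality-based structure of $\mathcal{A}$ from Section~\ref{SEC:class_unbounded}. The key observation is that a $\Delta$-partition $(G_1,\dots,G_5)$ groups naturally: writing the odd-indexed parts $G_1,G_3,G_5$ and even-indexed parts $G_2,G_4$, any way of merging consecutive blocks into three consecutive groups whose mutual orientations form a cyclic pattern yields a trisection of $H$. Concretely, $(\;V_1,\;V_2\cup V_3\cup V_4,\;V_5\;)$ and $(\;V_1\cup V_2,\;V_3,\;V_4\cup V_5\;)$ and $(\;V_1\cup V_2\cup V_3,\;V_4,\;V_5\;)$ (and their rotations) are trisections of $H$, because in each case the three groups have the required complete-domination pattern by the parity rules defining a $\Delta$-partition. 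Applying Lemma~\ref{AF3} to each such trisection forces, in each grouping, that one of the three groups is a single vertex and another is transitive.

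First I would pin down which of $G_1,G_3,G_5$ is a single vertex. Applying Lemma~\ref{AF3} to a trisection that isolates, say, $G_1$ against the rest: either $G_1 = I$, or $G_5 = I$, or one of the merged blocks is transitive; chasing a couple of such trisections (and using $D_3 \not\in \mathcal{AF}$, $\D_2 \not\in \mathcal{AF}$ from Lemma~\ref{LEM:F}) I expect to conclude that at least one odd part, and in fact both of the "outer" parts in the eventual normal form, must be $I$, and that $G_2,G_4$ cannot both be nontrivial. A cleaner route may be: since $H$ is strongly connected (every nontrivial $\Delta$-partition gives a strong tournament) and $H \in \mathcal{A}$, Proposition~\ref{PROP:upartition} and Proposition~\ref{Uhomo} describe its homogeneous-set structure — each maximal homogeneous set is one of the blocks of the canonical $\D$-partition of $H$ coming from $A_n$, which has all-singleton odd parts. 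Comparing the given five-block partition with that canonical one forces three of the five $G_i$ to be $I$ in the right positions, and forces any nontrivial $G_i$ to sit in an "even" slot — i.e. to be homogeneous, hence (being in $\mathcal{A}$ and possibly not strong) either transitive or again admitting such structure. That is exactly the restriction to $\{L_{k}\}$ or $\mathcal{AF}$ claimed.

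Then I would do the finite case-check: with the positions of the forced $I$'s determined, the remaining two slots can hold either an $L_k$ or an element $H' \in \mathcal{AF}$; the constraint "at most one of $G_2,G_3,G_4$ is a general $\mathcal{AF}$-tournament and it must be adjacent to an $I$ on the correct side" rules out the bad combinations and leaves precisely the four listed shapes $(L_{k_1},I,L_{k_2},L_{k_3},I)$, $(I,L_{k_1},L_{k_2},I,L_{k_3})$, $(I,H',L_{k_1},L_{k_2},I)$, $(I,L_{k_1},L_{k_2},H',I)$. Conversely each listed shape clearly lies in $\mathcal{AF}$: it embeds into a suitable $A_n$ (one checks the $\D$-partition refines $A_n$'s) and it has an obvious forest ordering obtained by concatenating forest orderings of the pieces with the singletons, so $\mathcal{AF}$-membership holds.

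The main obstacle I anticipate is the bookkeeping in the middle step: there are several non-isomorphic ways to cut a five-block $\Delta$-partition into a trisection, and one must make sure enough of them are used to kill every configuration except the four survivors, while not over-claiming (e.g. the shapes with an $H'$ in slot $2$ versus slot $4$ are genuinely different and both survive). Getting the parity conventions of the $\Delta$-partition exactly right in each merged trisection — so that Lemma~\ref{AF3} applies with the correct orientation — is the delicate part; once the trisections are correctly identified, each individual deduction is immediate from Lemma~\ref{AF3}, Lemma~\ref{LEM:F}, and the homogeneous-set propositions.
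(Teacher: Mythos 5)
Your central device fails at the first step: none of the merged partitions you list are trisections. With the paper's parity convention for a $\D$-partition we have $V_3\Rightarrow V_1$, $V_5\Rightarrow V_1$ and $V_5\Rightarrow V_3$, so for instance in $(V_1,\,V_2\cup V_3\cup V_4,\,V_5)$ the first group is not complete to the second ($V_3$ is complete to $V_1$), and the other two groupings fail for the same kind of reason. In fact $U_3=\D(I,I,I,I,I)$ admits no trisection at all: any part of a trisection with at least two vertices is a homogeneous set, and $U_3$ is prime with five vertices — the paper even remarks on this just before Theorem~\ref{THM:growing2}. So no regrouping of the five blocks can ever be fed to Lemma~\ref{AF3}, and the first half of your plan collapses. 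Your fallback route via Propositions~\ref{PROP:upartition} and~\ref{Uhomo} proves the wrong statement: it would force every non-singleton $G_i$ into an even slot, yet two of the four shapes you must keep, $(L_{k_1},I,L_{k_2},L_{k_3},I)$ and $(I,L_{k_1},L_{k_2},I,L_{k_3})$, have a nontrivial transitive tournament in an odd (outer) slot; moreover the given blocks $G_i$ need not be maximal homogeneous sets, so Proposition~\ref{Uhomo} does not directly compare them with any canonical partition.

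More substantively, the proposal never produces the two exclusions on which the paper's proof actually rests: (i) $\D(C,I,I,I,I)$, $\D(I,I,C,I,I)$, $\D(I,I,I,I,C)\notin\mathcal{A}$ (a counting argument with Propositions~\ref{PROP:upartition} and~\ref{Uhomo}), which is what makes $G_1,G_3,G_5$ transitive; and (ii) $\D(I,C,I,I,L_2)$ and its complement are not forest tournaments, together with $A_3=\D(I,C,I,C,I)\notin\mathcal{F}$ because $\chi(A_3)=3$. Exclusion (ii) is exactly what forbids a cyclic triangle in $G_4$ when $|V(G_1)|\ge 2$ (and forbids $G_2$ and $G_4$ from both containing cyclic triangles when $G_1,G_5$ are singletons), and it genuinely requires the forest-ordering side — an explicit analysis of backedge graphs, thickness and forest cuts — which your sketch never engages with beyond citing $\D_2, D_3\notin\mathcal{AF}$; it cannot be extracted from $\mathcal{A}$-side homogeneous-set considerations alone. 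Without these ingredients the concluding ``finite case-check'' has nothing to run on. (A minor point: the converse you argue at the end is not part of this lemma; it belongs to the `if' direction of Theorem~\ref{THM:main2-1}.)
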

\begin{proof}
First, we claim that $G_1,G_3$ and $G_5$ are transitive tournaments.
\\

\noindent (1) {\em $G_1,G_3,G_5$ does not contain a cyclic triangle.}
\\
\\
Let $H_1=\D(C,I,I,I,I)$, $H_2=\D(I,I,C,I,I)$ and $H_3=\D(I,I,I,I,C)$.
Since $\mathcal{AF}$ is hereditary, 
it is enough to prove that neither $H_1$, $H_2$ nor $H_3$ is contained in $\mathcal{A}$. 
Suppose $H_k \in \mathcal{A}$ for some $k\in \{1,2,3\}$.
Note that $V(H_k)$ contains a maximal homogeneous set $S$ with three vertices inducing a cyclic triangle, and there are two vertices  complete to $S$ and there are two vertices complete from $S$. 

 Since $H_k$ is strongly connected, there exists a $\D$-partition  of $V(H_k)$, $(\{v_1\},X_1,\{v_2\},X_2,\ldots,X_{n-1},\{v_{n}\})$, by Proposition~\ref{PROP:upartition}. 
By Proposition~\ref{Uhomo}, there exists $r$ such that $S=X_r$. Observe that $|X_i|= 1$ for every $i(\neq r)$ since $S$ is the only maximal homogeneous set of $H_k$. So, we obtain the following inequality:
$$
7=|V(H_k)|=n+\sum_{i=1}^{n-1} |X_i| \ge n+3+(n-2)=2n+1.
$$
So, $n\le 3$. If $n=2$ then $r=1$, and $|V(H_k)|=n+|X_1|=5<7$, a contradiction. 
Hence, $n=3$. 
If $X_1=S$ (resp. $X_2=S$), then there exists only one vertex complete to $S$ (resp. complete from $S$). This yields a contradiction since in $H_k$, there are two vertices complete to $S$ and two vertices complete from $S$. 
Therefore, $H_k \not \in \mathcal{A}$ for $k=1,2,3$. This proves (1).
\\

\noindent (2) {\em $\D(I,C,I,I,L_2), \D(L_2,I,I,C,I) \not \in \mathcal{F}$. }
\\
\\
The complement of a forest tournament is a forest tournament. 
So, it is enough to prove that  $\D(I,C,I,I,L_2) \not \in \mathcal{F}$ since $\D(L_2,I,I,C,I)$ is the complement of $\D(I,C,I,I,L_2)$, 

Let $K=\D(I,C,I,I,L_2)$ and $V(K)=\{v_1,v_2,\ldots,v_8\}$ such that for $1\le i<j\le 8$, $v_i$ is adjacent from $v_j$ if and only if $(i,j)=(1,5)$, $(1,7)$, $(1,8)$, $(2,4)$, $(5,7)$, $(5,8)$ and $(7,8)$. Figure~\ref{pic1} describes all backward edges of $K$ under $\sigma=v_1,v_2,\ldots,v_8$.
\begin{figure} [ht!]
\centering
\includegraphics[scale=0.6]{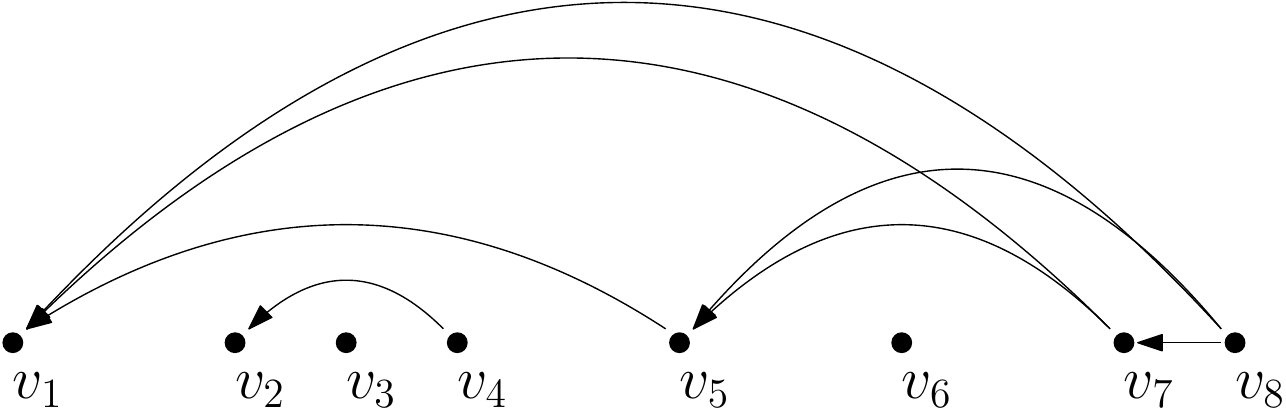}
\caption{All backward edges of $K$ under $\sigma$}
\label{pic1}
\end{figure}

Suppose $K$ is a forest tournament with forest ordering $\sigma$. 
Since $\{v_2,v_3,v_4\}$ induces a cyclic triangle, there exists a backward edge $v_b v_a$ with $a,b \in \{2,3,4\}$ under $\sigma$. 
Since $v_1$ is complete to $\{v_2,v_3,v_4\}$, if $v_b <_{\sigma} v_1$, then $\{v_1,v_a,v_b\}$ induces a cyclic triangle in $B_{\sigma}(K)$. So, $v_1 <_{\sigma} v_b$.
Similarly, for $i=5,6,7,8$, since $v_i$ is complete from $\{v_2,v_3,v_4\}$, $v_a <_{\sigma} v_i$.   
Suppose $v_i <_{\sigma} v_1$ for some $i=5,6,7,8$. Then, $v_a <_{\sigma} v_i <_{\sigma} v_1 <_{\sigma}v_b$, and $K|\{v_a,v_i,v_1,v_b\}$ has thickness two under $\sigma| \{v_a,v_i,v_1,v_b\}$, which yields a contradiction by Proposition~\ref{onecomponent}. Hence, $v_1 <_{\sigma} v_i$ for $i=5,6,7,8$. 

Since $B_{\sigma}(K)$ is a forest and $v_1 <_{\sigma} v_i$ for $i=5,7,8$, 
it follows that $v_8 <_{\sigma} v_7 <_{\sigma} v_5$.
Let us look at $v_6$. 
If $v_7 <_{\sigma} v_6$, then $\{v_1,v_8,v_7,v_6\}$ induces a cycle of length four in $B_{\sigma}(K)$.
So, $v_1 <_{\sigma} v_6 <_{\sigma}v_7$. However, in this case, $K|\{v_1,v_6,v_8,v_7,v_5\}$ has thickness two under $\sigma|\{v_1,v_6,v_8,v_7,v_5\}$, which is a contradiction. 
Therefore, $K$ is not a forest tournament. This prove (2).
\\

By (1), $G_1$, $G_3$ and $G_5$ are transitive tournaments. 

\noindent {\bf Case 1: $|V(G_1)|\ge 2$.} 
Then $G_2$ and $G_5$ are one-vertex tournaments, since otherwise, $V(G_1) \cup V(G_2) \cup V(G_5)$ induces a subtournament of $H$ containing $\D_2$ which is not a forest tournament by Lemma~\ref{LEM:F}.
By (2), $G_4$ does not contain a cyclic triangle. This implies that $(G_1,G_2,G_3,G_4,G_5)=(L_{k_1}, I, L_{k_2}, L_{k_3},I)$ for some positive integers $k_1,k_2$ and $k_3$. 

\noindent {\bf Case 2: $|V(G_5)|\ge 2$.} 
Similar to Case 1, we have $(G_1,G_2,G_3,G_4,G_5)=(I,L_{k_1}, L_{k_2},I, L_{k_3})$ for some positive integers $k_1,k_2$ and $k_3$. 

\noindent {\bf Case 3: $|V(G_1)|=|V(G_1)|=1$.} 
If both $G_2$ and $G_4$ contain a cyclic triangle, then $H$ contains $A_3$, which is not a forest tournament since $\chi(A_3)=3$. 
Thus, either $G_2$ or $G_4$ is a transitive tournament. 
Finally, since $\mathcal{AF}$ is hereditary, it follows that $G_2, G_4 \in \mathcal{AF}$, so
$(G_1,G_2,G_3,G_4,G_5)= (I,H',L_{k_1},L_{k_2},I)$ or $(I,L_{k_1},L_{k_2},H',I)$ for positive integers $k_1,k_2$ and  $H'\in \mathcal{AF}$. This completes the proof.
\end{proof}

\begin{LEM}\label{AF7}
Let $H=\D(G_1,G_2,G_3,G_4,G_5,G_6,G_7)$ for tournaments $G_i$s. If $H\in \mathcal{AF}$ then $G_i$ is transitive for $i=1,\ldots,7$ and $|V(G_j)|=1$ for $j=1,4,7$.
\end{LEM}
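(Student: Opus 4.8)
The plan is to mimic the structure of Lemma~\ref{AF5}, but now with a $\D$-partition into seven parts. First I would establish the analogue of step (1) there, namely that $G_i$ contains no cyclic triangle for $i=1,3,5,7$ (the ``odd'' blocks). For each such $i$, let $H_i$ be the tournament obtained from $H$ by replacing $G_i$ with a cyclic triangle $C$ and every other $G_j$ with $I$, so $H_i$ has $3+6=9$ vertices. Since $\mathcal{AF}$ is hereditary it suffices to show $H_i\notin\mathcal{A}$. Here $V(H_i)$ has a unique maximal homogeneous set $S$ ($=$ the copy of $C$), with two vertices complete to $S$ and two vertices complete from $S$ for $i=3,5$, and with the extreme cases $i=1,7$ giving zero vertices on one side. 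If $H_i\in\mathcal{A}$, Proposition~\ref{PROP:upartition} gives a $\D$-partition $(\{v_1\},X_1,\ldots,X_{m-1},\{v_m\})$, and Proposition~\ref{Uhomo} forces $S=X_r$ for some $r$; since $S$ is the only maximal homogeneous set, $|X_t|=1$ for $t\neq r$, so $9=|V(H_i)|=m+\sum|X_t|\ge m+3+(m-2)=2m+1$, giving $m\le 4$. A short case analysis on $m\in\{2,3,4\}$ and on which $X_t$ equals $S$ (counting the vertices complete to / from $S$, which must match $H_i$) rules out all possibilities, so $H_i\notin\mathcal{A}$.

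Next I would show the analogue of step (2): certain small ``forbidden-in-$\mathcal{F}$'' configurations cannot appear, which will force the even blocks $G_2,G_4,G_6$ to behave. Concretely, I expect I need something like $\D(I,C,I,I,I,L_2,I)\notin\mathcal{F}$ and its complement, plus a statement ruling out two of $G_2,G_4,G_6$ simultaneously containing a cyclic triangle (that would embed $A_3$, which has $\chi=3$, contradicting Proposition~\ref{twocolor}); and similarly an argument that if any $G_j$ with $j\in\{1,4,7\}$ has at least two vertices, together with a nontrivial neighbouring even block, we get $\D_2$, which is not in $\mathcal{F}$ by Lemma~\ref{LEM:F}. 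The new feature compared to Lemma~\ref{AF5} is the ``middle'' odd block $G_4$: I must separately argue $|V(G_4)|=1$. For this I would look at the subtournament on $V(G_3)\cup V(G_4)\cup V(G_5)\cup(\text{one vertex of }G_2)\cup(\text{one vertex of }G_6)$ or similar, and extract a copy of $\D_2$ or a non-forest backedge configuration whenever $|V(G_4)|\ge 2$, again via an explicit backedge-graph thickness argument of the type used in the proof of Lemma~\ref{LEM:F} and in step (2) of Lemma~\ref{AF5}.

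Combining these: step (1) gives that $G_1,G_3,G_5,G_7$ are transitive. If $|V(G_1)|\ge 2$, the $\D_2$-avoidance forces $G_2$ and $G_7$ (and by a further $A_3$/non-forest argument the remaining blocks) to be trivial, and one checks this contradicts getting all seven blocks consistent unless in fact $|V(G_1)|=1$; the symmetric argument handles $G_7$. So $|V(G_1)|=|V(G_7)|=1$. The $A_3$-avoidance among $\{G_2,G_4,G_6\}$ together with the middle-block argument then forces $|V(G_4)|=1$ and, after also noting $G_4$ transitive from step (1), pins everything down: $G_i$ is transitive for all $i$ and $|V(G_j)|=1$ for $j=1,4,7$, which is exactly the claim.

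The main obstacle I anticipate is the second half: getting $|V(G_4)|=1$. In Lemma~\ref{AF5} the middle odd block is $G_3$ and it is allowed to be an arbitrary $L_k$ (case 3 there has $G_3=L_{k_1}$ with $k_1$ unrestricted), so there is no analogous constraint; here, by contrast, the two flanking ``$I$'' blocks $G_1,G_7$ plus the structure on both sides must be leveraged to squeeze $G_4$ down to a single vertex, and I expect this requires a genuinely new explicit non-forest-tournament computation (an eight- or nine-vertex tournament whose every vertex ordering yields a backedge graph that is either disconnected in the wrong way or has a component of thickness $\ge 2$ or an induced cycle), analogous to but more delicate than the $K=\D(I,C,I,I,L_2)$ computation in the proof of Lemma~\ref{AF5}. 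The step (1) homogeneous-set counting is routine once set up, so the real work is isolating the minimal forbidden forest-tournament witnesses and verifying them by the thickness/induced-cycle criterion (Corollary~\ref{nocycle} and Proposition~\ref{onecomponent}).
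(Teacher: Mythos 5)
Your proposal leaves the central claim of the lemma unproven, namely that $|V(G_j)|=1$ for $j=1,4,7$. In the paper this is a one\nobreakdash-line reduction: if some $G_j$ with $j\in\{1,4,7\}$ has at least two vertices, then $H$ already contains $\D_2$, hence $H\notin\mathcal{F}$ by Lemma~\ref{LEM:F}. The point you miss is that the other six blocks, even if they are all singletons, wrap around $G_j$ cyclically: for $G_4$, take two vertices of $G_4$, one vertex each of $G_5,G_7$, and one vertex each of $G_1,G_3$; since $G_4\Rightarrow G_5,G_7$, $\,G_5,G_7\Rightarrow G_1,G_3$ (odd--odd backward edges) and $G_1,G_3\Rightarrow G_4$, these three $2$-element sets form a trisection inducing $\D_2$. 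The same works for $G_1$ (with $\{G_2,G_4\}$ and $\{G_5,G_7\}$) and for $G_7$ (with $\{G_1,G_3\}$ and $\{G_4,G_6\}$). So the step you single out as the ``main obstacle,'' and propose to settle with a new eight- or nine-vertex non-forest verification, needs no computation at all; moreover your concrete suggestion for $G_4$ (restricting to $V(G_3)\cup V(G_4)\cup V(G_5)$ plus one vertex each of $G_2,G_6$) cannot produce $\D_2$ when $G_3,G_5$ are singletons, and your argument for $G_1$ (``$\D_2$-avoidance forces $G_2$ and $G_7$ trivial \dots one checks this contradicts consistency'') does not yield any contradiction in the case where all other blocks are singletons. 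As written, the size statement for $G_1,G_4,G_7$ is therefore not established.

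The remaining steps are also heavier than needed and have gaps of their own. For transitivity of $G_2,G_3,G_5,G_6$ the paper reuses Lemma~\ref{AF5}: five consecutive blocks of the $\D$-partition (parity preserved) give a $5$-block $\D$-partition, so a cyclic triangle in $G_3$ or $G_5$ yields $\D(I,I,C,I,I)$, and a triangle in $G_2$ (resp.\ $G_6$) yields $\D(I,C,I,I,L_2)$ (resp.\ $\D(L_2,I,I,C,I)$) by merging one vertex each of $G_5,G_7$ (resp.\ $G_1,G_3$) into the $L_2$; none of these is in $\mathcal{AF}$ by Lemma~\ref{AF5}, and $G_1,G_4,G_7$ need no triangle argument once they are singletons. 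Your substitutes are problematic: the witness $\D(I,C,I,I,I,L_2,I)$ need not be contained in $H$ at all when the other blocks are singletons, so it cannot serve; and in your homogeneous-set argument for the nine-vertex tournaments $H_i$, counting the vertices complete to and from $S$ does not rule out the case $m=4$ with $S=X_2$, since there too exactly three vertices are complete to $S$ and three complete from $S$ --- to get a contradiction you must additionally observe that in an $\mathcal{A}$-structure those three dominating vertices induce a cyclic triangle, whereas in $H_i$ they induce a transitive one. These secondary steps are repairable, but the proof of the key size claim is missing, and the overall route re-derives facts that Lemma~\ref{AF5} already provides.
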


\begin{proof}
For some $j=1,4,7$, if $|V(G_j)|\ge 2$, then $H$ contains $\D_2$ which is not a forest tournament by Lemma~\ref{LEM:F}. So, $G_j$ is a one-vertex tournament for $j=1,4,7$.

If either $G_2, G_3, G_5$ or $G_6$ contains a cyclic triangle, then $T$ contains either $\D(I,I,C,I,I)$, $\D(I,C,I,I,L_2)$ or $\D(L_2,I,I,C,I)$, which does not belong to $\mathcal{AF}$ by Lemma~\ref{AF5}. So, $G_i$ is a transitive tournament for $i=2,3,5,6$.
This completes the proof.
\end{proof}

If $T_1$ and $T_2$ are tournaments with at least two vertices, 
then for a vertex $v \in V (T_1)$, we say \emph{a tournament $T$ is obtained from $T_1$ by substituting $T_2$
for $v$} if $V(T) = V(T_1) \cup V (T_2) \setminus \{v\}$  and $xy \in E(T)$ if and only if one of
the following holds.
\begin{itemize}
\item  $xy \in E(T_1 \setminus v)$ or $xy \in E(T_2)$,
\item $x \in V (T_1)$, $y\in V (T_2)$ and $xv \in V (T_1)$, 
\item $x \in V (T_2)$, $y \in V (T_1)$, and $vy \in V (H_1)$.
\end{itemize}
We remark that every non-prime tournament can be obtained from a prime tournament by substitutions.
Now we are ready to prove Theorem~\ref{THM:main2-1}.

\begin{proof}[Proof of Theorem~\ref{THM:main2-1}]
First, we prove the `only if' part.
Let $H \in \mathcal{AF}$. 
\\
\\
(1) \emph{If $H$ is prime, then $H$ is isomorphic to $I,L_2, U_2(=C), U_3$ or $U_4$.}
\\
\\
If $|V(H)|\le 2$, then $H$ is isomorphic to either $I$ or $L_2$, so we are done.

Assume that $|V(H)|\ge 3$.
Since $H$ is prime and belongs to $\mathcal{A}$, 
Proposition~\ref{Uprime} implies that $H$ is isomorphic to $U_n$ for some $n\ge 2$.
If $n\ge 5$, then $U_n$ contains $\D_2$, so $U_n \not\in \mathcal{F}$ by Lemma~\ref{LEM:F}.
Therefore, $H$ is isomorphic to either $U_2 (=C), U_3$ or $U_4$.
\\

Since $I \in \mathcal{AF}$, we can obtain $L_2$ from 2), $U_2$ from 3), $U_3$ from 4) and $U_4$ from 6).

Let us consider the case that $H$ is not prime, that is, 
$H$ can be obtained from some prime tournament $G_0$ with $|V(G_0)|>1$ by substituting $G_1,G_2,\ldots,G_n$ for vertices $v_1,v_2,\ldots,v_n$ of $G_0$.
For each $i$, since $G_i$ is a subtournament of $H$, it also belongs to $\mathcal{AF}$. In particular, $G_0 \in \mathcal{AF}$.
So, by (1), $G_0$ is isomorphic to either $L_2, U_2, U_3$ or $U_4$. 
Then, Lemma~\ref{AF3}, Lemma~\ref{AF5} and Lemma~\ref{AF7} imply that $H$ is isomorphic to one of the tournaments in the list of Theorem~\ref{THM:main2-1}.

Now we prove the `if' part, that is,  every tournament $H$ in the list of Theorem~\ref{THM:main2-1} belongs to $\mathcal{AF}$. 
Since the complement of a tournament in $\mathcal{AF}$ belongs to $\mathcal{AF}$, we need to consider the following six cases.

\noindent{\bf Case 1.} It is trivial when $H=I$. 
\\
\\
\noindent{\bf Case 2.} $H=H_1\Rightarrow H_2$ for some $H_1,H_2 \in \mathcal{AF}$.
\\
\\
To show $H\in \mathcal{A}$, 
choose $k_i$ such that $A_{k_i}$ contains $H_i$ for $i=1,2$. Let $K=\max\{k_1,k_2\}$. Then, $A_{K+1}$ contains $H$ since $A_{K+1}$ contains two copies $A_K^{(1)}$ and $A_K^{(2)}$ of $A_K$ where $V(A_K^{(1)})$ is complete to $V(A_K^{(2)})$.

To show that $H\in \mathcal{F}$, let $\sigma_i$ be a forest ordering of $H_i$ for $i=1,2$. Let $\sigma=\sigma_1,\sigma_2$, that is, for $u,v \in V(H)$, $v<_{\sigma} u$ if either 
\begin{itemize}
\item $v\in V(H_1)$ and $u\in V(H_2)$,
\item  $v,u \in V(H_1)$ and $v<_{\sigma_1} u$, or 
\item $v,u \in V(H_2)$ and $v<_{\sigma_2} u$. 
\end{itemize}
Then, $\sigma$ is a forest ordering of $H$, and so $H$ is a forest tournament. Therefore $H\in \mathcal{AF}$.
\\
\\
\noindent{\bf Case 3.} $H=\D(I,L_k,H')$ 
for an integer $k$ and $H' \in \mathcal{AF}$.
\\
\\
To show $H \in \mathcal{A}$, let $M$ be a positive integer such that  $A_{M-1}$ contains $H'$ and $M>k$. Then, clearly, $A_{M}$ contains $ \D(I,L_k,H')$.

To show $H\in \mathcal{F}$, let $v_1,v_2,\ldots, v_n$ be a forest ordering of $H'$, and $V(L_k)=\{u_i\mid 1\le i\le k\}$ with $u_i \to u_j$ for $1\le i <j \le k$. 
Then, the ordering $u_1,u_2,\ldots,u_k,v_1,v_2,\ldots,v_n,w$ is a forest ordering of $\Delta(I,L_k,H)$, so, $H$ is a forest tournament. Therefore, $H\in \mathcal{AF}$.
\\
\\
\noindent{\bf Case 4.} $H=\D(L_{k_1},I,L_{k_2},L_{k_3}, I)$ 
for integers $k_1,k_2,k_3$;
\\
\\
To prove $H \in \mathcal{A}$, let $K=k_1+k_2+k_3$. Then, $U_K$ contains $\D(L_{k_1},I,L_{k_2},L_{k_3}, I)$ 
and so, $\D(L_{k_1},I,L_{k_2},L_{k_3},I)$ belongs to $\mathcal{A}$.

Let $\sigma=a,b,y_1,y_2,\ldots,y_{k_2},x_1,x_2,\ldots,x_{k_1},z_1,z_2,\ldots,z_{k_3}$ be a vertex ordering of $\D(L_{k_1},I,L_{k_2},L_{k_3},I)$ with backward edges $\{x_{i}a \mid 1\le i \le k_1\} \cup \{z_jb\mid  1\le j \le k_3\}$. Then, $\sigma$ is a forest ordering of $H$. So $H$ is  a forest tournament.
\\
\\
\noindent{\bf Case 5.} $H=\D(I,H',L_{k_1},L_{k_2}, I)$ 
for integers $k_1$, $k_2$ and $H' \in \mathcal{AF}$.
\\
\\
Let $M>k_1+k_2$ be a positive integer such that $A_{M-1}$ contains $H'$. Then, $A_M$ contains $H$, so $H \in \mathcal{A}$.

To prove $H \in \mathcal{F}$,  let $\sigma'=v_1,v_2,\ldots,v_n$ be a forest ordering of $H'$. Let $\sigma=a,v_1,v_2,\ldots,v_n, b,x_1,x_2,\ldots,x_{k_1},y_1,\ldots,y_{k_2}$ be a vertex ordering of $H$ with backward edge set  the union of the set of backward edges of  $H'$ under $\sigma'$, $\{ab\}$, $\{x_ia\mid 1\le i \le k_1\}$ and $\{y_ib \mid 1\le i \le k_2\}$. Then, $\sigma$ is a forest ordering. So, $H \in \mathcal{F}$.
\\
\\
\noindent{\bf Case 6.} $H=\D(I,L_{k_1}, L_{k_2}, I,L_{k_3},L_{k_4},I)$ for integers $k_1,k_2,k_3,k_4$.
\\
\\
Let 
$$
\sigma=x_1,\ldots,x_{k_1},c,z_1,\ldots,z_{k_3},b,y_1,\ldots,y_{k_2},a,w_1,\ldots,w_{k_4}
$$
be a vertex ordering of $\D(I,L_{k_1}, L_{k_2}, I,L_{k_3},L_{k_4},I)$ where the set of  backward edges is $\{(a,x_i) \mid i\in [k_1]\} \cup \{(w_i,c)\mid i\in [k_4]\} \cup \{(b,c)\} \cup \{(b,z_i)\mid i\in [k_3]\} \cup \{(y_i,b)\mid i\in [k_2]\} \cup \{(a,b)\}$.
Then, $\sigma$ is a forest ordering, so $\D(I,L_{k_1}, L_{k_2}, I,L_{k_3},L_{k_4},I)$ is a forest tournament. 

To prove $\D(I,L_{k_1}, L_{k_2}, I,L_{k_3},L_{k_4},I)\in \mathcal{A}$, let $K=k_1+k_2+k_3+k_4+1$. Then, $U_4(I,L_{k_1}, L_{k_2}, I,L_{k_3},L_{k_4},I)$ is contained in $A_K$.

This completes the proof.
\end{proof}

\section{Constructions of heroes}\label{SEC:growing}

In this section, we prove Theorem~\ref{THM:growing2} and Theorem~\ref{THM:U3}.

\subsection{Proof of Theorem~\ref{THM:growing2}}

We start with the following observations.

\begin{OBS}\label{OBS:hero_property}
Let $D\in \mathcal{D}'$, and $\mathcal{H}$ be the set of all $D$-heroes.
\begin{itemize}
\item[(1)] $\mathcal{H}$ is hereditary since  for a tournament $T$ and its subtournament $T'$, every $T'$-free tournament is $T$-free. 
\item[(2)] $\mathcal{H}$ is closed under taking complement since every tournament has the same chromatic number with its complement.
\end{itemize}
\end{OBS}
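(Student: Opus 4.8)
The plan is to read both items off the definition of a $D$-hero; only item (2) needs a real idea. For (1): suppose $H$ is a $D$-hero with witnessing constant $c$ and let $H'$ be a subtournament of $H$. Any tournament that contains $H$ also contains $H'$, so every $H'$-free tournament is $H$-free; hence every $\{D,H'\}$-free tournament is $\{D,H\}$-free and therefore has chromatic number at most $c$. Thus $H'$ is a $D$-hero with the same constant, and $\mathcal H$ is hereditary.

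For (2) the starting point is that $T\mapsto\overline T$ is an involution on tournaments that preserves chromatic number and converts the property ``$X$-free'' into ``$\overline X$-free''. Applying this to the family of $\{D,H\}$-free tournaments gives immediately that $H$ is a $D$-hero if and only if $\overline H$ is an $\overline D$-hero. Next, $\mathcal D'$ is closed under complementation: a one-line induction from $\overline I=I$ and $\overline{\D(A,B,C)}\cong\D(\overline A,\overline C,\overline B)$ shows $\overline{D_n}\cong D_n$, so $\mathcal D$ (the closure of $\{D_n\}$) is complement-closed, and since $\overline{D_3}\cong D_3$ so is $\mathcal D'$. Hence $\overline D\in\mathcal D'$, and it remains only to show that being a $D$-hero is a property that does not depend on which $D\in\mathcal D'$ one takes: granting this, $H$ a $D$-hero implies $\overline H$ an $\overline D$-hero implies (as $\overline D\in\mathcal D'$) $\overline H$ a $D$-hero, so $\mathcal H$ is complement-closed.

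To establish this independence, fix $D\in\mathcal D'$ and choose $N$ with $D$ a subtournament of $D_N$; then $D_3$ is a subtournament of $D$ and $D$ a subtournament of $D_N$. Since the class of $\{X,H\}$-free tournaments can only shrink when $X$ is replaced by one of its subtournaments, the implications ``$D_N$-hero implies $D$-hero implies $D_3$-hero'' are automatic; the work is all in the upward implication ``$D_3$-hero implies $D_n$-hero'' for every $n$. I would prove it by induction on $n$, the step using the recursion $D_n=\D(I,D_{n-1},D_{n-1})$ together with a $\D$-gluing statement in the spirit of Lemma~\ref{lemma1}: if $T$ is $\D(I,D_{n-1},D_{n-1})$-free and every $D_{n-1}$-free subtournament of $T$ has bounded chromatic number, then so does $T$. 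Applied to an $H$-free, $\D(I,D_{n-1},D_{n-1})$-free tournament $T$ --- whose $D_{n-1}$-free subtournaments are $\{D_{n-1},H\}$-free and hence, by the induction hypothesis, of bounded chromatic number --- this bounds $\chi(T)$. With this in hand, $D_3$-hero, $D$-hero, $\overline D$-hero and $D_N$-hero become all equivalent, finishing (2).

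The only genuinely non-routine ingredient is this $\D$-gluing lemma (equivalently, the upward implication $D_3$-hero implies $D_n$-hero), and that is the step I expect to be delicate --- note in particular that, in contrast with the hero-construction lemma, neither part of $\D(I,D_{n-1},D_{n-1})$ is transitive. Everything else is bookkeeping with inclusions of forbidden-subtournament classes together with the fact that a tournament and its complement have equal chromatic number; if a $\D$-analogue of Lemma~\ref{lemma1} is already available, item (2) reduces to the two-line complement argument above.
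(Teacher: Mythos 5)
Item (1) is exactly the paper's argument, and the first part of your treatment of (2) is also the paper's entire proof: complementation is a chromatic-number-preserving involution exchanging ``$X$-free'' with ``$\overline{X}$-free'', so $H$ is a $D$-hero if and only if $\overline{H}$ is a $\overline{D}$-hero, and your computation $\overline{D_n}\cong D_n$ shows $\overline{D}\in\mathcal{D}'$. That pair form is also all that the later applications need: in the `if' direction of Theorem~\ref{THM:growing2} the statement ``$\D(I,H',L_k)$ is a $D$-hero'' is proved for every member of $\mathcal{D}'$ at once, so one may complement $H'$, $H$ and $D$ simultaneously (using $\overline{\D(I,L_k,H')}\cong\D(I,\overline{H'},L_k)$ and $\overline{D}\in\mathcal{D}'$). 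Your worry that the observation, read literally with $D$ fixed, asks for ``$D$-hero $=$ $\overline{D}$-hero'' is a fair reading of the statement, but the repair you propose is where your proof breaks down.

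The gap is the $\D$-gluing lemma, equivalently the upward implication ``$D_3$-hero implies $D_n$-hero'', which you leave unproved. As you state it, the gluing lemma is false: for $n=3$ it asserts that if $T$ is $\D(I,D_2,D_2)$-free, i.e.\ $D_3$-free, and every $D_2$-free (that is, transitive) subtournament of $T$ has bounded chromatic number --- which is automatic with bound $1$ --- then $\chi(T)$ is bounded; this would make $D_3$ a hero, contradicting Lemma~\ref{lem:nonhero-d3} and the list of minimal non-heroes in Section~\ref{SEC:nonhero}. Restricting to $n\ge 4$ does not rescue the plan with the tools at hand: Lemma~\ref{lemma2} is inapplicable precisely because, as you note, neither part of $\D(I,D_{n-1},D_{n-1})$ is transitive, and no substitute is proved in the paper or proposed by you. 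Moreover the independence claim is far stronger than the observation itself: if every $D_3$-hero were automatically a $D_n$-hero, the parameter $D\in\mathcal{D}'$ would be redundant throughout the paper and Question~\ref{question} would collapse to the single case $D=D_3$, which the authors treat as open. So you have replaced a one-line complementation argument by an unproven --- and, in the generality stated, false --- structural lemma; dropping the independence claim and keeping only your first two paragraphs (the involution together with $\overline{D}\in\mathcal{D}'$) is the argument the paper intends and suffices for the way the observation is used.
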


We need a lemma from~\cite{hero} in order to 
prove Theorem~\ref{THM:growing2}.
We first give the following definitions.

For tournaments $G, H$ and an integer $a$, an {\em $(a,G,H)$-jewel} is a tournament $T$ with $|V(T)|=a$ such that every partition $(A,B)$ of $V(T)$, either $T|A$ contains $G$ or $T|B$ contains $H$.
We say a tournament $T$ \emph{contains an $(a,G,H)$-jewel chain of length $n$} if there exist vertex disjoint subtournaments  $J_1,J_2,\ldots,J_n$ of $T$, which are $(a,G,H)$-jewels, such that $V(J_i)$ is complete to $V(J_j)$ for $1\le i<j\le n$.  
	
\begin{LEM}[Berger et al. \cite{hero}]\label{lemma2}
Let $H$, $K$ be tournaments and $a\ge 1$ an integer. If either $H$ or $K$ is transitive, then there is a map $f_{H,K}:\mathbb{Z}^+\times \mathbb{Z}^+ \to \mathbb{Z}^+$ satisfying the following property.
For every $\Delta(I,H,K)$-free tournament $G$, if
\begin{itemize}
\item $c_1$ is an integer such that every $H$-free subtournament of $G$  and $K$-free subtournament of $G$ has chromatic number at most $c_1$, and
\item $c_2$ is an integer such that every subtournament of $G$ containing no $(a,H,K)$-jewel-chain of length four has chromatic  number at most $c_2$,
\end{itemize}
then $G$ has chromatic number at most $f_{H,K}(c_1,c_2)$.
\end{LEM}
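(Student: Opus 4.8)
The plan is to give an explicit bound on $\chi(G)$; we may assume $\chi(G)>c_2$, since otherwise there is nothing to prove. By reversing all arcs — which preserves chromatic number and turns $\Delta(I,H,K)$-freeness into $\Delta(I,\bar K,\bar H)$-freeness — we may assume $K$ is transitive (the case ``$H$ transitive'' reduces to this). The single elementary fact used throughout is: since $G$ is $\Delta(I,H,K)$-free, for every vertex $w$ there is \emph{no} pair of sets $Y\subseteq N^+(w)$, $Z\subseteq N^-(w)$ with $Y\Rightarrow Z$, $H\subseteq G|Y$ and $K\subseteq G|Z$, for otherwise $\{w\}\cup Y\cup Z$ would contain $\Delta(I,H,K)$.

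Since $\chi(G)>c_2$, the second hypothesis supplies an $(a,H,K)$-jewel chain $J_1,J_2,J_3,J_4$ of length four, so $V(J_i)\Rightarrow V(J_j)$ for $i<j$. For a vertex $w$ outside $\bigcup_i V(J_i)$, write $A_i=N^+(w)\cap V(J_i)$ and $B_i=N^-(w)\cap V(J_i)$; the defining property of a jewel gives, for each $i$, that $H\subseteq G|A_i$ or $K\subseteq G|B_i$, while the elementary fact above forbids $i<j$ with $H\subseteq G|A_i$ and $K\subseteq G|B_j$. A short case analysis on the two sets $\{i:H\subseteq G|A_i\}$ and $\{i:K\subseteq G|B_i\}$ then shows that every such $w$ has a \emph{threshold} $p(w)\in\{1,\dots,5\}$ with $K\subseteq G|B_i$ for all $i<p(w)$ and $H\subseteq G|A_i$ for all $i\ge p(w)$. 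Hence $V(G)$ is covered by the at most $4a$ jewel-chain vertices together with the five classes $W_p=\{w:p(w)=p\}$; since chromatic number is subadditive over a partition, it suffices to bound $\chi(G|W_p)$ for each $p$.

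Bounding $\chi(G|W_p)$ is the heart of the argument. Each vertex of $W_p$ is pinned to a copy of $H$ inside one fixed jewel and, when $p\ge2$, also to a copy of $K$ inside the preceding jewel; as these jewels have only $a$ vertices, $W_p$ breaks into boundedly many pieces on each of which all vertices dominate one common $H$-copy $S$ (and are dominated by one common $K$-copy $T$). On such a piece $W$, the first hypothesis says that $\chi(G|W)>c_1$ forces $G|W$ to contain both $H$ and $K$, and the second says that $\chi(G|W)>c_2$ forces an $(a,H,K)$-jewel chain of length four inside $G|W$ itself. The plan is to feed such an internal jewel chain back into the original jewels, using the fixed sets $S$, $T$ and a carefully chosen apex vertex to close up a \emph{cyclic} triangle of blobs — a copy of $\Delta(I,H,K)$ — with transitivity of $K$ being exactly what lets the required ``$K$-side'' subsets be found with full completeness. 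This caps the recursion depth by a constant depending only on $H,K$ and $a$, hence bounds $\chi(G|W)$, each $\chi(G|W_p)$, and finally $\chi(G)$; the resulting quantity is what we take to be $f_{H,K}(c_1,c_2)$.

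The expected main obstacle is precisely this last step. A single jewel chain is a totally ordered configuration, so no cyclic triangle of blobs can sit inside one chain alone; the cycle has to be manufactured by playing a deeper jewel chain against a shallower one \emph{across} the threshold classes, and keeping track of which arcs point which way — so that the triangle genuinely closes — is delicate. Getting this bookkeeping right, and pinpointing where exactly the hypothesis ``$H$ or $K$ transitive'' and the bound $c_1$ enter, is where essentially all the effort goes.
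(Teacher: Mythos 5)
This lemma is not proved in the paper at all --- it is quoted verbatim from Berger et al.~\cite{hero} and used as a black box --- so your attempt can only be measured against the original argument, and against its own internal completeness. The first half of your proposal is fine and matches the standard opening move: the reduction by arc-reversal to the case ``$K$ transitive'', the observation that $\Delta(I,H,K)$-freeness forbids an $H$-copy in $N^+(w)\cap V(J_i)$ together with a $K$-copy in $N^-(w)\cap V(J_j)$ for $i<j$, the resulting threshold classes $W_1,\dots,W_5$ relative to a length-four jewel chain, and the further refinement of each $W_p$ into at most $2^{O(a)}$ pieces pinned to a common $H$-copy $S$ and common $K$-copy $T$ are all correct.

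The genuine gap is exactly where you say ``the plan is to feed such an internal jewel chain back \dots to close up a cyclic triangle of blobs'': this step is never carried out, and as described it does not visibly close. After the pinning, every completeness relation at your disposal points the same way ($T\Rightarrow W$, $W\Rightarrow S$, $T\Rightarrow S$, and $V(J_i)\Rightarrow V(J_j)$ for $i<j$), so the configuration is acyclic; the backward arcs that would complete a copy of $\Delta(I,H,K)$ were precisely the ones already excluded in establishing the threshold structure, and copies of $H$, $K$ or a jewel chain found \emph{inside} $G|W$ via $c_1$ and $c_2$ carry no completeness relations to $S$, $T$ or to each other. Consequently there is also no actual recursion: you invoke a ``recursion depth capped by a constant'' without specifying what quantity decreases, and you explicitly leave open where the hypothesis that $K$ is transitive and where the bound $c_1$ enter --- but that is the entire content of the lemma. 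In Berger et al.\ this step is a substantial, delicate argument (and transitivity of $K$ is used in an essential, specific way); your write-up supplies the routine reduction and then an unproven core claim, so it is a proof outline with the main idea missing rather than a proof.
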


We also need the following result of Stearns~\cite{transitive}.

\begin{THM}[Stearns~\cite{transitive}]\label{THM:Ramsey}
For every integer $k\ge 1$, every tournament with at least $2^{k-1}$ vertices contains $L_k$.
\end{THM}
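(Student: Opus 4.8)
The plan is to prove Theorem~\ref{THM:Ramsey} by induction on $k$, using a simple pigeonhole argument on the neighbourhoods of a single vertex. The base case $k=1$ is immediate: a tournament with at least $2^{0}=1$ vertex contains a one-vertex transitive subtournament, which is a copy of $L_1$.

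For the inductive step, suppose the statement holds for $k$, and let $T$ be a tournament with $|V(T)|\ge 2^{k}$. Fix any vertex $v\in V(T)$, and write $N^{+}(v)$ and $N^{-}(v)$ for the sets of out-neighbours and in-neighbours of $v$. Since $|N^{+}(v)|+|N^{-}(v)|=|V(T)|-1\ge 2^{k}-1$, the pigeonhole principle gives $\max\{|N^{+}(v)|,|N^{-}(v)|\}\ge \lceil (2^{k}-1)/2\rceil = 2^{k-1}$. As $L_k$ is isomorphic to its converse, we may assume without loss of generality (reversing all edges of $T$ if necessary) that $|N^{+}(v)|\ge 2^{k-1}$. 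Applying the induction hypothesis to $T|N^{+}(v)$ yields a transitive set $S\subseteq N^{+}(v)$ with $|S|=k$; writing $s_1,\ldots,s_k$ for an acyclic ordering of $S$ (so $s_i s_j\in E(T)$ whenever $i<j$) and recalling that $v$ is adjacent to every vertex of $S$, the ordering $v,s_1,\ldots,s_k$ witnesses that $T|(\{v\}\cup S)$ is transitive. Hence $T$ contains $L_{k+1}$, which completes the induction.

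This argument presents no genuine difficulty; the only point requiring any care is that the doubling bound is sharp for the pigeonhole step, since $\lceil (2^{k}-1)/2\rceil$ equals exactly $2^{k-1}$, so no slack is lost in the recursion. (One could additionally record the optimality of the bound by exhibiting, for each $k$, a tournament on $2^{k-1}-1$ vertices with no transitive subtournament on $k$ vertices, but this is not needed for the statement as stated.)
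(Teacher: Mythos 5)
Your induction-plus-pigeonhole argument is correct and is the classical proof of this bound (essentially Stearns' original argument); the paper itself gives no proof, citing the result from Stearns, so there is nothing in the text to compare against. One caveat about your final parenthetical: the bound $2^{k-1}$ is \emph{not} optimal in the sense you suggest for all $k$ --- for instance, every tournament on $14$ vertices already contains $L_5$, so no tournament on $2^{4}-1=15$ vertices avoiding $L_5$ exists --- but since you explicitly exclude that remark from the proof, it does not affect correctness.
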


\begin{proof}[Proof of Theorem~\ref{THM:growing2}]
Let $D \in \mathcal{D}'$ and $H$ a tournament admitting a trisection.

Suppose $H$ is a $D$-hero. Then, $H$ belongs to $\mathcal{AF}$, and by Lemma~\ref{AF3}, $H$ is isomorphic to $\D(I,H', L_k)$ or $\D(I,L_k,H')$ for some positive integer $k$ and $H' \in \mathcal{AF}$. Since $H'$ is a subtournament of $H$, it is a $D$-hero by Observation~\ref{OBS:hero_property} (1).
This proves the `only if' part.

For the `if' part, it is enough to show that $H=\D(I,H',L_k)$ is a $D$-hero by Observation~\ref{OBS:hero_property} (2). Let $c$ be an integer such that every $H'$-free tournament has chromatic number at most $c$.
We show that there exists $d$ such that every $H$-free tournament $T$ has chromatic number at most $d$. 
 Let $a=2^k|V(H')|$.
\\
\\
(1) For every tournament  $T'$, if $T'$ contains no $(a,H',L_k)$-jewels, then $\chi(T')$ is less than $a+c$.
\\
\\
If $T'$ is $H'$-free, then $\chi(T') \le c$. So, we may assume that $T'$ contains $H'$.
Let $H_1,H_2,\ldots,H_m$ be vertex disjoint subtournaments of $T'$ isomorphic to $H'$ with $m$ maximum. 
Let $J=\bigcup_{i=1}^{\min\{m,2^k\}} V(H_i)$.

If $m\ge 2^k$, then  $T'|J$ is an $(H',L_k)$-jewel. (For every partition $(X,Y)$ of  $J$,  if $T'|X$ is $H'$-free, then $Y$ meets $V(H_i)$ for every $i=1,2,\ldots,2^k$, which implies $|Y| \ge 2^k$. So, $T'|Y$ contains $L_k$ by Theorem~\ref{THM:Ramsey}.)
Thus, $m<2^k$. 
Note that $T'\setminus J$ is $H'$-free by the maximality of $m$, so it has chromatic number at most $c$. 
Therefore, the chromatic number of $T'$ is at most 
$$
\chi(T'|J)+\chi(T'\setminus J) \le |J|+c < a+c.
$$
This proves (1).
\\

By (1), we may assume that $T$ contains $(a,H',L_k)$-jewels.
Let $\mathcal{J}$ be the set of all $(a,H',L_k)$-jewels, $\mathcal{J}_1=\{J'\Rightarrow J''|J',J'' \in \mathcal{J}\}$ 
and $\mathcal{J}_2=\{J_1' \Rightarrow J_1''|J_1',J_1'' \in \mathcal{J}_1\}$. 
Since every $\mathcal{J}$-free subtournament of $T$ has chromatic number at most $a+c$ by (1),  
every $\mathcal{J}_1$-free subtournament of $T$ has chromatic number at most  some constant $c_1$ by Lemma~\ref{lemma1} with $\mathcal{H}_1=\mathcal{H}_2=\mathcal{J}$. 
By applying Lemma~\ref{lemma1} again, 
there exists $c_2$ such that every $\mathcal{J}_2$-free subtournament of $T$ 
has chromatic number at most $c_2$.
Since every tournament not containing $(a,H',L_k)$-jewel-chain of length four is $\mathcal{J}_2$-free, it has chromatic number at most $c_2$.
Hence, by Lemma~\ref{lemma2}, there exists $d$ such that every $H$-free tournament has chromatic number at most $d$. This completes the proof.
\end{proof}

\subsection{Proof of Theorem~\ref{THM:U3}}\label{SEC:proofU3}

To prove Theorem~\ref{THM:U3}, we need the following result of Liu~\cite{gaku}.
(Recall that $S_n$ is a tournament defined at the beginning of Section~\ref{SEC:nonhero}.) 
 
\begin{THM}\label{gaku}(Liu~\cite{gaku})
Let $T$ be a prime tournament. Then, $T$ is $U_3$-free if and only if $T$ is isomorphic to $S_n$ for some $n\ge1$ or $V(T)$ can be partitioned into sets $X_1,X_2,X_3$ such that $X_1\cup X_2$, $X_2\cup X_3$ and $X_3\cup X_1$ are transitive.
\end{THM}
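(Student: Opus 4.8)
The plan is to prove the two directions of the equivalence separately, treating the forward (``only if'') direction by induction on $|V(T)|$ using the structure theory of prime tournaments. \textbf{Backward direction.} Suppose $V(T)=X_1\cup X_2\cup X_3$ with each of $X_1\cup X_2$, $X_2\cup X_3$, $X_3\cup X_1$ transitive (call this the \emph{three-part condition}); every subtournament of $T$ then inherits such a partition, so it suffices to observe that $U_3$ satisfies no three-part condition. Indeed $U_3$ has cyclic triangles on $\{v_1,v_2,v_3\}$, $\{v_1,v_2,v_5\}$, $\{v_1,v_4,v_5\}$ and $\{v_3,v_4,v_5\}$, and in such a partition every cyclic triangle must meet all three parts; but fixing the parts of $v_1,v_2,v_3$ forces $v_5$ into the part of $v_3$, contradicting the triple $\{v_3,v_4,v_5\}$. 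For $S_n$: straight from the definition, the out-neighbourhood and the in-neighbourhood of every vertex of $S_n$ induce transitive subtournaments, whereas the out-neighbourhood of $v_2$ in $U_3$ is the cyclic triangle $\{v_3,v_4,v_5\}$; hence no subtournament of $S_n$ is isomorphic to $U_3$.

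\textbf{Forward direction.} Let $T$ be prime and $U_3$-free, and induct on $n=|V(T)|$. Since no tournament on $4$ vertices is prime and the prime $U_3$-free tournaments on at most $5$ vertices are, up to isomorphism, $S_2$, $S_3$ and $N$ — the first two of the form $S_m$, the last satisfying the three-part condition with parts $\{v_1\}$, $\{v_2,v_4\}$, $\{v_3,v_5\}$ — the cases $n\le 5$ are finished by inspection. Let $n\ge 6$; we may assume $T\not\cong S_n$ and must produce a three-part partition. By the Schmerl--Trotter classification of critically indecomposable tournaments, either $T$ is critically indecomposable, in which case $T$ is one of the explicitly listed such tournaments and every $U_3$-free one of them is isomorphic to some $S_m$ or satisfies the three-part condition — so our hypotheses force the latter — or else there is a vertex $v$ for which $R:=T\setminus v$ is prime. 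In that case $R$ is prime, $U_3$-free, on $n-1\ge 5$ vertices, so by induction $R\cong S_m$ or $R$ satisfies the three-part condition.

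\textbf{Reattaching $v$.} Let $A$ and $B$ be the in-neighbourhood and the out-neighbourhood of $v$ in $R$. If $R$ satisfies the three-part condition with parts $X_1,X_2,X_3$, I would show that either the partition of $R$ can be re-chosen so that $v$ may be appended to one of the three parts with all pairwise unions still transitive — giving the three-part condition for $T$ — or the obstruction to this exhibits, inside some $X_i\cup X_j$ together with $v$, a copy of $U_3=\D(I,I,I,I,I)$, contradicting $U_3$-freeness. If $R\cong S_m$, then $n=2m$ is even, so $T$ is not isomorphic to any $S_{m'}$ and we must again reach the three-part condition; here the vertex- and arc-transitivity of $S_m$ cuts the possible attachments of $v$ down to boundedly many types, and for each I would argue that $v$ together with four suitably chosen vertices of $S_m$ induces $U_3$ — unless $v$ is joined to $R$ in one of a few special ways, each of which either makes an end-segment of the cyclic order of $S_m$ together with $v$ a homogeneous set of $T$ (contradicting primality) or leaves $T$ satisfying the three-part condition.

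\textbf{Main obstacle.} The laborious step is this reattachment analysis, and within it the case $R\cong S_m$: although $S_m$ is symmetric enough that the number of attachment types is small, one must check \emph{uniformly in $m$} that every non-special attachment of $v$ forces an induced $U_3$, and locating the five relevant vertices takes care as $m$ grows. A secondary point is the reduction to ``$T\setminus v$ is prime'' via Schmerl--Trotter: one should confirm that every $U_3$-free critically indecomposable tournament already occurs in the conclusion, which — as $U_3$ is itself critically indecomposable — is a finite verification.
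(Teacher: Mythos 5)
A point of reference first: the paper itself offers no proof of this statement — it is imported from Liu~\cite{gaku} and used as a black box in the proof of Theorem~\ref{THM:U3} — so there is no internal argument to compare yours against; your proposal has to stand on its own. On its own terms it is a plan, not a proof. What you do prove is the easy half: the backward direction is complete and correct (in a partition with all three pairwise unions transitive every cyclic triangle must meet all three parts, and the four cyclic triangles of $U_3$ then clash; the transitivity of in- and out-neighbourhoods of $S_n$ rules out $U_3$ as a subtournament, since the out-neighbourhood of $v_2$ in $U_3$ is a cyclic triangle). The base cases are also right: no $4$-vertex tournament is prime, the prime $5$-vertex tournaments are exactly $S_3$, $N$ and $U_3$, and $N$ does satisfy the three-part condition with the parts you list.

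The genuine gap is that the whole inductive step is deferred. Both reattachment analyses — extending a three-part partition of $R=T\setminus v$ to one of $T$ or else locating an induced $U_3$, and the case $R\cong S_m$ — are announced with ``I would show'' / ``I would argue'' and never carried out, and these are precisely the content-bearing parts of the theorem: nothing in the sketch says how the case analysis runs, why the attachment types of $v$ to $S_m$ are boundedly many (vertex-transitivity alone does not give this, and $S_m$ is not arc-transitive in general), or how the five vertices of a $U_3$ are produced uniformly in $m$. A second soft spot is the Schmerl--Trotter reduction: the dichotomy ``$T$ critically indecomposable or some $T\setminus v$ prime'' is fine (it is the definition), but the critically indecomposable tournaments form infinite families, so checking that every $U_3$-free one is an $S_m$ or admits a three-part partition is not a finite verification; the parenthetical ``as $U_3$ is itself critically indecomposable'' does not deliver this by itself — you would need, say, the nesting of the critical families to argue that once $U_3$ embeds in one member it embeds in all larger ones, and then still handle the $U_3$-free members uniformly in the order. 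As written, the proposal establishes only the backward implication; the forward direction remains unproved.
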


\begin{proof}[Proof of Theorem~\ref{THM:U3}]
We prove that for a tournament $T$, if $T$ is $\{D_n,U_3\}$-free for some $n\ge 2$, then $T$ is $3^{n-2}$-colorable. 
This implies Theorem~\ref{THM:U3} since for every $D\in \mathcal{D}'$, there exists $D_n$ containing $D$, and every $\{D,U_3\}$-free tournament is $\{D_n,U_3\}$-free.
We use induction on $|V(T)|$.

The base case is that either $n=2$ or $T$ is prime.
If $n=2$,  then $\chi(T) =1$ since $T$ is $D_2$-free. 
If $T$ is prime and $n>2$, then Theorem~\ref{gaku} implies that $G$ is two colorable, so we are done.

Suppose $T$ is not prime, and assume the statement is true for every graph with less than $|V(T)|$ vertices.

Let $T$ be $\{D_n, U_3\}$-free for some $n>2$. 
Since $T$ is not prime, $T$ is obtained from some prime tournament $G_0$ by substitutions, and Theorem~\ref{gaku} implies that  either
\begin{itemize}
\item $G_0$ is isomorphic to $S_m$ for some $m\ge 2$, or 
\item $V(G_0)$ can be partitioned into sets $X_1,X_2,X
_3$ such that  $X_1\cup X_2$, $X_2\cup X_3$ and $X_3\cup X_1$ are transitive. 
\end{itemize}

For the first case, let $V(G_0)=\{v_1,v_2,\ldots,v_{2m-1}\}$ with $v_iv_j \in E(G_0)$ for every $i,j$ with $j-i \in \{1,2,\ldots,m-1\} \mod{2m-1}$, and let $T$ be obtained from $G_0$ by substituting $G_i$ for $v_i$ for $i=1,2,\ldots,2m-1$.
For every edge $v_iv_j$ of $G_0$, there exists a vertex $v_k$ such that $v_j \to v_k$ and $v_k\to v_i$. 
Hence, if both $G_i$ and $G_j$ contain $D_{n-1}$, then for every $v\in V(G_k)$, $V(G_i) \cup V(G_j) \cup \{v\}$ induces a subtournament of $T$ containing $D_n$, which yields a contradiction. 
Therefore, all tournaments $G_1,G_2,\ldots,G_{2m-1}$ but one are $D_{n-1}$-free.
Without loss of generality, let $G_1,G_2,\ldots,G_{2m-2}$ be $D_{n-1}$-free. 

By the induction hypothesis,  
there exist a $3^{n-3}$-coloring $\phi_i:V(G_i) \to \{1,2,3\}^{n-3}$ of $G_i$ for $i=1,2,\ldots,2m-2$
and  a $3^{n-2}$-coloring $\phi_{2m-1}:V(G_{2m-1}) \to \{1,2,3\}^{n-2}$ of $G_{2m-1}$. 

We define a map $\phi: V(T)\to \{1,2,3\}^{n-2}$ as follows:
\begin{itemize}
\item for $v\in V(G_{2m-1})$, $\phi(v)=\phi_{2m-1}(v)$;
\item for $i=1,2,\ldots,m-1$ and $v\in V(G_i)$, $\phi(v)=\{1\} \times \phi_i(v)$, and
\item for $i=m,m+1,\ldots,2m-2$ and $v\in V(G_i)$, 
$\phi(v)=\{2\} \times \phi_i(v)$.
\end{itemize}
We claim that $\phi$ is a $3^{n-2}$-coloring of $T$.
Suppose there exist three vertices $u\in V(G_i)$, $v\in V(G_j)$ and $w\in V(G_k)$ inducing a monochromatic cyclic triangle in $T$. Clearly, $i$, $j$ and $k$ are all distinct. Indeed, $\{i,j,k\}$ intersects with only one of $\{1,2,\ldots,m-1\}$ and $\{m,m+1,\ldots, 2m-2\}$ by the definition of $\phi$.
However, in either case, $\{u,v,w\}$ is transitive, a contradiction.
Therefore, $\phi$ is a $3^{n-2}$-coloring of $T$.

For the second case, let $\{G_v \mid v \in V(G_0)\}$ be tournaments such that 
$T$ is  obtained from $G_0$ by substituting $G_v$ for $v$ for every $v\in G_0$. 
	Clearly, $G_v$ is $\{D_n, U_3\}$-free for $v \in V(G_0)$. 
	For each $v\in V(G_0)$, 
	let $\phi_v: V(G_v) \to \{1,2,3\}^{n-3}$ be a $3^{n-3}$-coloring of $G_v$ 
	if $G_v$ is $D_{n-1}$-free, 
	and $\phi_v:V(G_v) \to \{1,2,3\}^{n-2}$ be a $3^{n-2}$-coloring of $G_v$ 
	if $G_v$ contains $D_{n-1}$. 
	We define a map $\phi:V(G)\to \{1,2,3\}^{n-2}$ as follows. 
\begin{itemize}
\item for $v\in V(G_0)$ and $u\in G_v$, if $G_v$ contains $D_{n-1}$, then let $\phi(u)=\phi_v(u)$;
\item for $v\in v(G_0)$ and $u\in G_v$, if $G_v$ is $D_{n-1}$-free and $v \in X_i$, 
 then let $\phi(u)=\{i\} \times \phi_v(u)$.
\end{itemize}

We claim that $\phi$ is a $3^{n-2}$-coloring of $T$.
Suppose there exist three vertices $u_1\in V(G_{v_1})$, $u_2\in V(G_{v_2})$ and $u_3\in V(G_{v_3})$ inducing a monochromatic cyclic triangle in $T$. Clearly, $v_1$, $v_2$ and $v_3$ are all distinct vertices of $G_0$. 
If two of $G_{v_1}$, $G_{v_2}$ and $G_{v_3}$ contain $D_{n-1}$, then $V(G_{v_1})\cup V(G_{v_2}) \cup V(G_{v_3})$ induces a subtournament of $T$ containing $D_n$, which yields a contradiction. So, without loss of generality, let $G_{v_1}$ and $G_{v_2}$ be $D_{n-1}$-free. 
Then, for $u_1$ and $u_2$ to have the same color in $\phi$, $v_1$ and $v_2$ belong to the same $X_{\ell}$ for some $\ell =1,2,3$. 
Then, by the condition that $X_1 \cup X_2$, $X_2 \cup X_3$ and $X_3\cup X_1$ are transitive in $G_0$,  $\{v_1,v_2,v_3\}$ is transitive in $G_0$ and so $\{u_1,u_2,u_3\}$ is transitive in $T$, a contradiction. Therefore, $\phi$ is a $3^{n-2}$-coloring of $G$. This completes the proof.
\end{proof}

\bibliographystyle{plain}

\cleardoublepage
\ifdefined\phantomsection
  \phantomsection  
\else
\fi
\addcontentsline{toc}{chapter}{Bibliography}

\bibliography{ref}

\end{document}